\newtheorem{remark}{Remark}%
\newtheorem{assumption}{Assumption}
\newtheorem{lemma}{Lemma}
\newtheorem{theorem}{Theorem}
\newtheorem{corollary}{Corollary}
\newtheorem{proposition}[theorem]{Proposition}%
\newtheorem{definition}{Definition}%
\newcommand{\EE}{\mathbb{E}}
\newcommand{\PP}{\mathbb{P}}
\newcommand{\ind}[1]{{\bf 1}\left(#1\right)}
\newtheorem{thm}{Theorem}
\DeclareMathOperator*{\argmax}{arg\,max}
\newcommand{\blind}{0}
\begin{document}

\def\spacingset#1{\renewcommand{\baselinestretch}%
{#1}\small\normalsize} \spacingset{1}

\date{}

\if0\blind
{
  \title{\bf Steady-State Analysis and Online Learning for Queues with Hawkes Arrivals}
  \author{Xinyun Chen\\
  	School of Data Science, Chinese University of Hong Kong (Shenzhen),\\
	Guiyu Hong\\
	School of Data Science, Chinese University of Hong Kong (Shenzhen)
}
  \maketitle
} \fi

\if1\blind
{
  \bigskip
  \bigskip
  \bigskip
  \begin{center}
    {\LARGE\bf Title}
\end{center}
  \medskip
} \fi

\bigskip
\begin{abstract}
	We investigate the long-run behavior of single-server queues with Hawkes arrivals and general service distributions and related optimization problems. In detail, utilizing novel coupling techniques, we establish finite moment bounds for the stationary distribution of the workload and busy period processes. In addition, we are able to show that, those queueing processes converge exponentially fast  to  their stationary distribution. Based on these theoretic results, we develop an efficient numerical algorithm to solv the optimal staffing problem for the Hawkes queues in a data-driven manner. Numerical results indicate a sharp difference in staffing for Hawkes queues, compared to the classic GI/GI/1 model, especially in the heavy-traffic regime.
\end{abstract}

\noindent%
{\it Keywords:}  steady-state analysis; Hawkes processes; capacity sizing problem; online learning
\vfill

\newpage
\spacingset{1.25} 
	\section{Introduction}\label{sec: introduction}
As one of the most fundamental queueing model, the single-server queue is used to describe service systems with scarce but reusable recourse. Recent empirical studies found that arrivals in many real queueing systems exhibit a clustering or self-exciting behavior; that is, an arrival may  increase the possibility of new arrivals. In some cases, such clustering behavior is intrinsic to the underlying system. For example, in the stock market, it is a common practice to split a large order into small child orders to reduce transaction cost. As a consequence, one observed arriving order may be followed by a sequence of other child orders \citep{abergel2015long}. As a natural extension of the classic Poisson process, Hawkes process has been used to model arrivals with self-excitement such as order flow in stock market \citep{abergel2015long}, infected patients  during pandemic \citep{bertozzi2020challenges}, and the internet traffic in social media \citep{zhao2015seismic}.


To understand the impact of self-excitement in the arrival process on the long-run performance of service systems, \cite{koops2018infinite} and \cite{daw2018queues} provided analytic solutions to steady-state moments on the number of people in system for different infinite-server systems with Hawkes arrivals.  In particular, \cite{daw2018queues} study the systems with Markovian Hawkes arrivals and phase-type/deterministic service times, whereas \cite{koops2018infinite} study the cases
with non-Markovian Hawkes arrivals and exponential service times. Because of the dependence between customer arrivals and sojourn times,
it is difficult to obtain analytic results for finite-server queues with Hawkes arrivals, including the most fundamental single-server queue (see the dicussion on p.941 of \cite{koops2018infinite}). Instead, simulation algorithm was developed in \cite{perfect2020} to generate unbiased samples from the steady-state distribution of Hawkes$/GI/1$ queue.

Although analytic expression of the steady-state distribution might not be available for finite-server queues with Hawkes arrivals, it is still possible to carry out theoretic analysis for better understanding on  the long-run behavior of such systems and to obtain useful application implications.  In this paper, we establish theoretic results on the steady-state moments and rate of convergence to stationarity for a general class of Hawkes$/GI/1$ queues with non-Markovian Hawkes arrivals and general distributed service times. As an application example,  we then develop an efficient numerical algorithm for performance optimization of Hawkes$/GI/1$ systems under the guidance of the theoretic results.

In detail, we first construct an explicit representation of stationary queueing functions for the Hawkes$/GI/1$ under certain tail conditions on the self-excitation function of the Hawkes process and the service time distribution. Then, we establish existence of higher-order moments for the stationary workload process and busy period. Finally, we obtain the main theoretic result in this paper, showing that the queueing processes of Hakes$/GI/1$ queue converge to their stationary distributions exponentially fast in total variation norm. Our proof techniques root in the cluster representation of Hawkes process \citep{koops2018infinite}. It first enables us to dominate the stationary Hawkes$/GI/1$ system with an auxiliary $GI/GI/1$ system and thus establish the moment bounds. For the ergodicity analysis, we utilize a coupling approach to obtain a finite-time bound and more explicit characterization on the convergence rate. Previous works on convergence rate analysis for queueing models usually resort to the so-called synchronous coupling, e.g. \cite{chen2023online} for $GI/GI/1$ queue, \cite{blanchet2020rates} and \cite{BanerjeeBudhiraja2020} for reflected Brownian motions. In synchronous coupling, a transient system is coupled with a stationary system such that they have independent initial system states but share the same arrival and service processes. In our setting, however, the initial system state is not independent of future arrivals due to the self-exciting behavior in Hawkes arrivals, and thus synchronous coupling is not applicable. To tackle this issue, we construct a different \textit{semi-synchronous coupling} leveraging the cluster representation of Hawkes process. In particular, we split the Hawkes arrivals, via the cluster representation, into two groups - those dependent on the initial state and those independent of the initial state - so that the coupled system shared the same arrivals that are independent of the initial state. We believe the \textit{semi-synchronous coupling} technique is of independent research interest and can be used for analysis of stochastic systems with Hawkes or other auto-correlated input processes.

As an application example, we then develop an online learning algorithm for optimal staffing/capacity sizing of Hawkes$/GI/1$ system under the guidance of the theoretic results we have developed. The algorithm is designed using the framework proposed in \cite{chen2023online} for which the choice of algorithm hyper-parameters and the performance guarantee rely highly on the ergodicity behavior of the underlying queueing system. Based on the exponential convergence result, we are able to design an online learning algorithm with logarithmic regret bound for optimizing service capacity in  Hawkes$/GI/1$ systems. Utilizing this algorithm as an efficient numerical tool, we then carry out an empirical study on the impact of self-excitement in arrival process to decision making in service systems. The numerical results illustrate that ignoring self-excitement or auto-correlation in the arrival data could lead to significant under-staffing in service systems. In addition, we design a set of numerical experiments to investigate Hawkes$/GI/1$ queue in heavy traffic. In the literature, the ``square root rule" is a well-known rule of thumb for optimal capacity sizing or staffing queues in heavy traffic, see for example, \cite{garnett2002designing,whitt2004efficiency,zeltyn2005call,LeeWard2019} and the references therein. With the square root staffing rule, the system would usually enjoy the efficient use of the servers but still with a quality guarantee of the service at the same time. Our numerical results, however, show that the square root rule does not hold in this example of service capacity sizing for Hawkes$/GI/1$ queues, indicating different asymptotic behavior of Hawkes$/GI/1$ queues in the heavy traffic. We believe these numerical findings not only bring application insights to decision makings in service system with self-exciting arrivals, but could also potentially lead to some new theoretic development.


\paragraph{Organization of the paper.} In Section \ref{sec: literature}, we review the related literature. In Section \ref{sec: preliminaries}, we introduce the preliminaries of Hawkes process and our main model, the Hawkes$/GI/1$ queue. The main theoretic results are presented in Section \ref{sec: main results}. In Section \ref{sec: stationary haweks queue}, we introduce our key building blocks, namely, the construction of the stationary version of Hawkes queue and the auxiliary $M/GI/1$ queue. Then we give the proof of the moment bounds for stationary Hawkes queues. In Section \ref{sec: ergodicity}, we show that the Hawkes$/GI/1$ queue converges to its steady state exponentially fast. As an application example, we introduce the optimal service capacity sizing problem in Section \ref{sec: online learning} and develop the online learning algorithm along with the regret bound analysis. Numerical results are reported in Section \ref{sec: num}. Finally, we conclude the paper in Section \ref{sec: conclusion}.
\section{Related Literature}\label{sec: literature}
The present paper is related to the following two streams of literature.
\paragraph{Queues with Hawkes arrivals.} Our paper relates to literature about queues with Hawkes arrivals. Hawkes process is initially introduced by sequel papers of Alan Hawkes in \citep{hawkes1971point,hawkes1971spectra,hawkes1974cluster} to model the occurrence of earthquakes in seismology and epidemics. \cite{gao2018functional} applies heavy-traffic analysis to Hawkes$/GI/\infty$ and shows that the properly scaled queue length process converges to a Gaussian process as background intensity grows, whose covariance kernel depends on the excitation function and the service times, which has no closed form in general. \cite{koops2018infinite} investigates the number of customers in a Hawkes$/M/\infty$ queue and applies analytical methods to provide transient moment bounds of the number of customers in the system. \cite{daw2018queues} analyzes Hawkes$/PH/\infty$ and Hawkes$/D/\infty$ models with exponential excitation function and obtains the transient and stationary moment bounds of the number of customers in the system. As it is difficult to obtain analytic solution to finite-server systems with Hawkes arrivals, \cite{perfect2020}  proposes a perfect simulation algorithm to generate samples exactly from the steady-state of Hawkes$/GI/1$ queue. 

\paragraph{Staffing with Non-standard Arrival Processes.} Our paper also relates to a small and emerging literature on staffing service systems with non-standard arrival processes. \cite{zhang2014scaling} uses a doubly stochastic Poisson process (DSPP) with a rate driven by a Cox-Ingersoll-Ross (CIR) process to model the overdispersion of the arrivals in a call center. The authors also suggest to use safety staffing in accordance to the level of  overdispersion. \cite{sun2021staffing} also consider CIR driven DSPP as arrival input and suggest that the square-root staffing calibrated by the overdispersion level could achieve probability of delay target in heavy-traffic. \cite{heemskerk2022staffing} investigate the staffing problem for many-server queue with Cox arrival process via an offered-load approach using analytic results for infinite-server queue with the same arrival process as developed in \cite{heemskerk2017scaling}. \cite{daw2019staff} considers the staffing problem for queues with batch arrivals and finds out that it is asymptotically optimal to set the service capacity by mean arrival rate plus additional safety staffing level proportional to batch sizes as batch sizes grow to infinity. Here we consider the staffing problem under Hawkes arrivals and develop a numerical solution that works in a data-driven manner.
\section{Preliminaries and Main Model}\label{sec: preliminaries}
In this section, we introduce the Hawkes process considered and the corresponding Hawkes$/GI/1$ queue. Specifically, we first introduce the Hawkes process in Section \ref{subsec: hawkes basics} and then its stationary version in Section  \ref{subsec: stationary Hawkes}. Finally, we introduce our main model, i.e., the Hawkes$/GI/1$ queue and its corresponding stochastic processes in Section \ref{subsec: hawkes queue assumptions}.
\subsection{Hawkes Process}\label{subsec: hawkes basics}
Mathematically, a Hawkes process is a counting process $N(t)$ that satisfies
$$
\mathbb{P}(N(t+\Delta t)-N(t)=n \vert \mathcal{F}(t))=
\begin{cases}
\lambda(t)\Delta t + o(\Delta t), &n=1\\
o(\Delta t), &n>1\\
1-\lambda(t)\Delta t +o(\Delta t), &n=0,
\end{cases}
$$
as $\Delta t\rightarrow0$. Here $\mathcal{F}(t)$ is the associated filtration, and $\lambda(t)$ is the conditional intensity that satisfies 
\begin{equation}\label{eq:def hawkes}
\lambda(t)=\lambda_0+\sum_{i=1}^{N(t)}h(t-t_i).
\end{equation}
Here $\lambda_0$ is the background intensity and $t_1,\  t_2,\cdots$ are the time of arrivals, and $h(\cdot): \mathbb{R}^+\rightarrow\mathbb{R}^+$ is called the excitation function. From equation \eqref{eq:def hawkes}, the Hawkes process has self-exciting property and once an arrival happens, the arrival rate will grow instantly.
\par Following \cite{koops2018infinite}, we introduce an equivalent cluster representation of Hawkes process. This representation basically describes Hawkes process as a kind of branching-process with immigrants, and is useful in our construction of stationary Hawkes process and analysis on the stationary Hawkes/GI/1 queue.  
\begin{definition}[Cluster representation of Hawkes Process]\label{def: cluster hawkes}
Consider a time (possibly infinite) $T\geq0$, and define a sequence of random events and their corresponding arrival time $\{t_n<T:n\geq1\}$ according to the following procedure:
\begin{enumerate}
	\item Consider a set of immigrants that arrives at time $\{t_l^1\leq T : l\geq1\}$, according to a time homogeneous Poisson process on $[0,T]$ with rate $\lambda_0$.
	\item For each immigrant time $t_l^1,\ l\geq1$, define a cluster $C_l$, which is the set for the arrivals brought by immigrant $l$. We index the events in $C_l$ by $k\geq1$ and represent the events by $e_l^k=(k,t_l^k,p_l^k)$. Here $t_l^k$ is the arrival time of this event and $p_l^k$ is the index of the parent event of this event. By this representation, we represent the $l$-th immigrant event as $e_l^1=(1,t^1_l,0)$.
	\item The cluster $C_l$ is generated by a branching process. Initialize $k=1$ and $C_l=\{e_l^1\}$. For events in $C_l$, let $n=\vert C_l\vert $, i.e., the current cardinal of cluster $C_l$. Generate the descendant events of next generation $e_l^{n+1}=(n+1,t_l^{n+1},k),\cdots,e_l^{n+\Lambda}=(n+\Lambda,t_l^{n+\Lambda},k)$ following inhomogeneous Poisson proecess on $[t_l^k,T]$ with rate function $\lambda(t)=h(t-t_l^k)$. Update $k\leftarrow k+1$ and combine $e_l^{n+1},\cdots,e_l^{n+\Lambda}$ into the $C_l$. Keep this procedure until no new events happens.
	\item Collect the arrival times of all events from all cluster as $\{t_n:n\geq1\}=\cup_l\{t_l^k, k=1,\cdots,\vert C_l\vert \}.$
\end{enumerate} 
\end{definition}
Let $m=\int_0^\infty h(t)dt$. According to Definition \ref{def: cluster hawkes}, $m$ is the expected number of events of the next generation that can be generated by a single event. For those events $e_l^k$ that are not an immigrant, let us define their birth times as $b_l^k=t_l^k-t_l^{p_l^k}$. Conditional on $\vert C_l\vert$, by the property of inhomogeneous Poisson distribution, $\{b_l^k:l\geq1, k\geq2\}$ are i.i.d. positive random variables with probability density function $f(t)=h(t)/m$ for $t\geq0$. The arrival time of the first event $t^1_l$ is called the arrival time of cluster $C_l$. The arrival time of the last event is denoted by $\delta_l\equiv \max_k t_l^k$ and is called the departure time of cluster $C_l$. In the rest of the paper,  we will specify the distribution of a Hawkes process by parameter $(\lambda_0,m,f(\cdot))$. 
\subsection{A Stationary Hawkes Process}\label{subsec: stationary Hawkes}
For the Hawkes process with parameters $(\lambda_0,m,f(\cdot))$ to be
stable in the long term, intuitively, each cluster should contain a finite number of events on average.
Therefore, we shall assume that $m < 1$ throughout the paper. This is a common assumption used in the literature and it is known that the Hawkes process has a unique stationary distribution
under this assumption \citep{hawkes1974cluster,bremaud2002rate}. Following \cite{perfect2020}, we can	construct a stationary Hawkes process via cluster representation as follows. 

First, we extend the homogeneous Poisson process $\{t_l^1\}$ of the immigrants, or equivalently, the cluster arrivals, to
time interval $(-\infty,\infty)$. For this two-ended Poisson process, we index the sequence of immigrant
arrival times by $\{\pm1,\pm2,...\}$ such that $t^1_{-1} \leq 0 < t^1_1$ and generate i.i.d. copies of clusters $\{C_{\pm l}: l=1,2,...\}$  for each $l$ following the procedure described in Definition 2. Then, the collection of all the events in $\{C_{\pm l}: l=1,2,...\}$ form a stationary Hawkes process on $(-\infty,\infty)$. In particular, for any pair of $a<b$ on the real line, define 
$$N(a, b)  \triangleq \vert \cup_{l=-\infty}^{\infty} \{(k,t_l^k,p_l^k): k= 1, 2, ..., \vert C_l\vert, a\leq t_l^k\leq b\}\vert,$$
as the number of events in  $\{C_{\pm l}: l=1,2,...\}$ that arrive on $[a,b]$. Then, we have, for any $a<b$ and $s>0$, $N(a,b) \stackrel{d}{=} N(a+s, b+s)$
following the fact that the two-ended Poisson process $\{t^1_{\pm 1}, t^1_{\pm 2},...\}$ is stationary on $(-\infty,\infty)$ and that the clusters $\{C_{\pm l}: l=1,2,...\}$ are i.i.d. and independent of the two-ended Poisson process.

\subsection{The Hawkes$/GI/1$ Model and Assumptions}\label{subsec: hawkes queue assumptions}
We consider a single-server queue where customers arrive according to a \textbf{stationary Hawkes process}
with parameters $(\lambda_0, m, f(\cdot))$ throughout this article. Following Section \ref{subsec: stationary Hawkes}, the stationary Hawkes process is a two-ended process on $(-\infty,\infty)$. A usual way to index customers is by the order of their arrival times. In particular, the $n$-th customer arriving after $0$ is indexed by $n$ while the last 
$n$-th customer arriving before $0$ is indexed by $-n$, for $n=1,2,...$. Note that there is no customer arriving exactly at time 0 almost surely. Then, the corresponding arrival time is denoted as $\{t_{\pm n}:n=1,2,...\}$.  

To facilitate analysis using cluster representation of the Hawkes process, we introduce a second way to index customers. Recall that $\{C_l: l= \pm 1, \pm 2,...\}$ is the set of i.i.d. clusters of the stationary Hawkes process. We denote by a pair of numbers $(l,k)$ as the customer corresponding to the $k$-th event in cluster $C_l$. The corresponding arrival time is $t_l^k$. Note that there is a one-one correspondence between the indices $n$ and $(l,k)$ as illustrated by Figure \ref{fig: job labels}.  Throughout the paper, we will use these two ways of indexing customers alternatively. 

\begin{figure}
\centering
\includegraphics[width=0.6\linewidth]{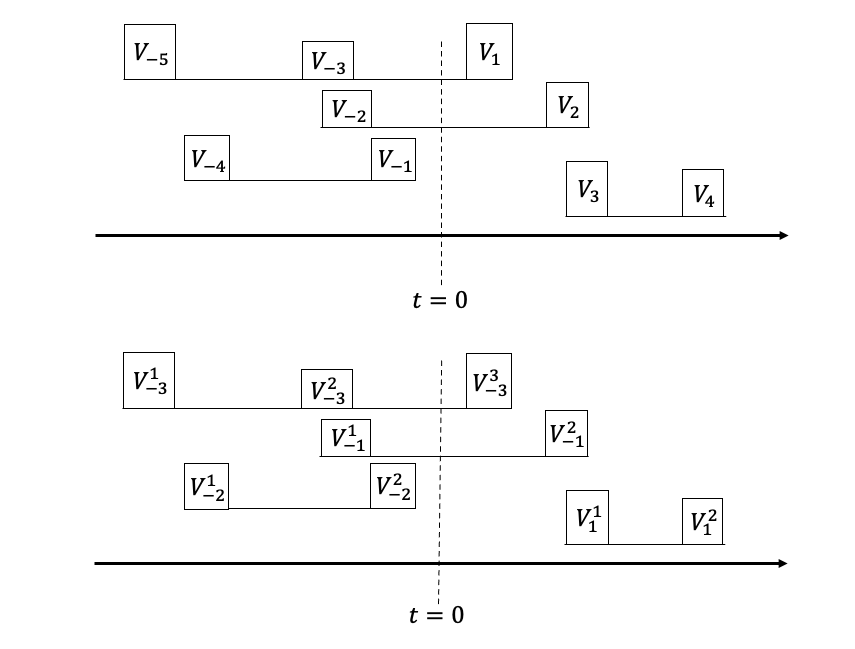}
\caption{Illustration of the job labels. Each rectangle represents an arrival. In the top panel, the jobs are labeled according to the arrival times. In the bottom panel, the jobs are labeled by clusters.}
\label{fig: job labels}
\end{figure}

Upon arrival, customer $n$ (or $(l,k)$) brings to the system a job of size $V_n$ (or $V_l^k$) which follows the distribution of a positive random variable $V$ with continuous pdf $g(\cdot)$. To include job size information, we shall write each event in $C_l$ as $e_l^k = (k,t_l^k,p_l^k,V_l^k)$. 
Customers are served FIFO and the server processes jobs at a constant rate $\mu>0$, i.e., the service time for a customer with job size $V$ is equal to $V/\mu$. Without loss of generality, we assume that $\EE[V]=1$. 

Next, we define the queueing processes of interest. First, let  $W(t)$ be the total amount of workload in the system at time $t$. Given the initial value $W(0)$,  the dynamic of $W(t)$ for $t\geq 0$ can be defined as follows. Let
$N(t)$ be the number of arrivals on time $[0,t)$ and define the process
\begin{equation}\label{eq: R}
R(t) =\sum_{n=1}^{N(t)}  V_n -\mu t\text{ for all }t \geq 0.
\end{equation}
Then, we have
$$dW(t) = dR(t) + dL(t), \text{ with } L(0) = 0, ~dL(t) \geq 0 \text{ and }W(t)dL(t) = 0.
$$
Here the function $L(t)/\mu$ equals to the server’s idle time by
time $t$.  In addition, we have a closed-form expression for $W(t)$ for given $W(0)$ and $R(t)$:
\begin{equation}\label{eq: workload dynamics}
W(t)=W(0) + R(t) -0\wedge\min_{0\leq s\leq t}\left \{W(0)+R(s)\right\}.
\end{equation}
In addition to the workload process, we are also interested in the so-called observed busy period process $X(t)$. In particular, for any $t\geq 0$, $X(t)$ is defined as the time elapsed since the last time when the server is idle, i.e.,
$$X(t)\triangleq t - \sup\{s\leq t: W(s)=0 \}.$$

We close the section by presenting the technical assumptions on the Hawkes$/GI/1$  queue. First, to ensure that the Hawkes$/GI/1$ queue is stable, we impose the following stability condition saying that the average arrival rate of jobs should be smaller than the service rate. 
\begin{assumption}[Stability Condition]\label{assmpt: stable} We assume that there exist {$\varepsilon>0$ and $\underline{\mu}>0$} such that
$$\mu\in\mathcal{B}\equiv[\underline{\mu},\bar{\mu}],\quad \lambda_0\cdot\frac{\EE[V]}{1-m}<\underline{\mu}-\varepsilon.$$
\end{assumption}
In addition, we also impose light-tail conditions on the birth time $b$ of the Hawkes process and individual workload $V$. These conditions will be mainly used in the proof of geometric ergodicity of the Hawkes$/GI/1$ queue. 
\begin{assumption}[Light-tailed Birth Time]\label{assmpt: light tail b}
The birth time $b$ of the Hawkes process 
has a continuous density $f(\cdot)$ and finite moment generating function around $0$, i.e.
$$\psi_b(\theta)\equiv \EE[\exp(\theta b)]<0,$$
for some $\theta >0$.
\end{assumption}


\begin{assumption}[Light-tailed Individual Workload]\label{assume: light-tail of V}
The individual workload $V$ has a finite moment generating function in a neighborhood of the origin, i.e.,
$$\psi_V(\theta)\equiv \EE[\exp(\theta V)]<0.$$
\end{assumption}

\section{Main Results} \label{sec: main results}
In this part, we present the main results on the long-run behavior of the Hawkes$/GI/1$ queue and some proof ideas. In Section \ref{subsec: CFTP construction} we first construct a stationary version of Hawkes$/GI/1$ queue so that we can explicitly characterize the steady-state distributions of the queueing processes. In Section \ref{subsec: moment result}, we present the finite moment results on the steady-state workload and busy periods, which are analog of the results in \cite[Chapter X]{asmussen2003applied} for the $GI/GI/1$ model. Finally, in Section \ref{subsec: ergodicity result}, we present the exponential ergodicity results in the form of exponentially-decay memory on the initial state and explain the coupling idea used in the proofs.

\subsection{A Stationary Hawkes/GI/1 Queue}\label{subsec: CFTP construction}
Suppose Assumption \ref{assmpt: stable} holds. We shall construct a stationary version of the Hawkes/GI/1 queue following the same idea as in \cite{loynes1962}. 
For a two-ended stationary Hawkes process as introduced in Section \ref{subsec: stationary Hawkes}, we index the arrivals after time 0 by $n=1,2,...$, and arrivals before time 0 by $n = -1, -2,\cdots$. Recall that for all $t\geq 0$, we have defined $N(t)$ as the number of arrivals in $[0,t)$. Now, we define $N(-t)$ as the minus of the number of arrivals in $[-t, 0]$. 
For any $t\in\mathbb{R}$ and $s\geq 0$, define
$$R^{\leftarrow}(t, s) \triangleq 
\sum_{n= N(t-s)}^{N(t)} V_n - \mu s.$$
Since the counting process $N(\cdot)$ is stationary and $V_n$ are i.i.d.,  $R^{\leftarrow}(t,
\cdot)$ follows the same distribution as $R^{
\leftarrow}(0,\cdot)$ for all $t\in\mathbb{R}$.	
Under Assumption \ref{assmpt: stable}, for each $t\geq 0$, $R^{\leftarrow}(t,s)$  is a process of negative drift and will go to $-\infty$ as $s\to\infty$. Therefore,  we can define
$$\tilde{W}(t) \triangleq \max_{s\geq 0} R^{\leftarrow}(t, s).$$
The following proposition shows that $\tilde{W}(\cdot)$ is a stationary version of the workload process of the Hawkes$/GI/1$ queue with service rate $\mu$.
\begin{proposition}\label{prop: stationary workload}
Under Assumption \ref{assmpt: stable},   $\{\tilde{W}(t):t\geq 0\}$ is a stationary process. Besides, the dynamic of $\tilde{W}(\cdot)$ follows the workload process of a Hawkes/GI/1 queue with customers arriving according to the stationary Hawkes process $N(t)$ with i.i.d. job sizes $V_n$ and served with rate $\mu$.
\end{proposition}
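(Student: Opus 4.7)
The statement is Loynes' classical construction transported from $GI/GI/1$ to the Hawkes/$GI$/$1$ setting, so my plan has three parts: almost sure finiteness of the supremum defining $\tilde{W}(t)$, stationarity of $\{\tilde{W}(t)\}$, and verification that $\tilde{W}$ obeys the workload dynamics \eqref{eq: workload dynamics}. For finiteness I would use Assumption \ref{assmpt: stable}, which gives $\lambda_0\EE[V]/(1-m)<\underline{\mu}\le\mu$; combining the ergodic theorem for the stationary Hawkes process (whose long-run intensity is $\lambda_0/(1-m)$) with the SLLN for the i.i.d.\ marks $V_n$ independent of the point process yields $R^{\leftarrow}(0,s)/s\to\lambda_0\EE[V]/(1-m)-\mu<0$ almost surely, hence $R^{\leftarrow}(0,s)\to-\infty$ and the supremum is attained at some finite $s$.

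For stationarity I would package the arrivals as a marked point process $\Xi=\{(t_n,V_n)\}$ on $\mathbb{R}$, which is stationary in $t$: the point part by Section \ref{subsec: stationary Hawkes}, and the marks are i.i.d.\ and independent of the points. Since $R^{\leftarrow}(t+\tau,s)$ is a measurable functional of the restriction of $\Xi$ to $[t+\tau-s,t+\tau]$ that shifts covariantly in $t$, the joint law of $(R^{\leftarrow}(t_j+\tau,\cdot))_{j=1}^{k}$ is invariant in $\tau$; taking suprema in $s$ preserves joint distributions, yielding finite-dimensional stationarity of $\tilde{W}$.

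For the dynamics I would derive a Loynes-type one-step recursion. For $0\le s_1\le s_2$, split the look-back window at $t+s_1$: for $u\le s_2-s_1$ the substitution $v=s_2-s_1-u$ gives $R^{\leftarrow}(t+s_2,u)=R(t+s_2)-R(t+s_1+v)$, while for $u>s_2-s_1$ the interval $[t+s_2-u,t+s_2]$ straddles $t+s_1$, giving the additive identity $R^{\leftarrow}(t+s_2,u)=R^{\leftarrow}(t+s_1,u-(s_2-s_1))+[R(t+s_2)-R(t+s_1)]$. Taking the sup over each range separately and combining yields
\[
\tilde{W}(t+s_2) = [R(t+s_2)-R(t+s_1)] + \max\bigl\{\tilde{W}(t+s_1),\, \max_{0\le v\le s_2-s_1}[R(t+s_1)-R(t+s_1+v)]\bigr\},
\]
which, using $\tilde{W}\ge 0$, rearranges into \eqref{eq: workload dynamics} applied on $[t+s_1,t+s_2]$ with initial value $\tilde{W}(t+s_1)$ and driving input $R(t+\cdot)-R(t+s_1)$. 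The main obstacle is the bookkeeping in this last step: carefully tracking the integrands under the split and verifying the algebra of the rearrangement. Nothing else in the argument is Hawkes-specific; only stationarity of the arrival stream (Section \ref{subsec: stationary Hawkes}) and the drift inequality (Assumption \ref{assmpt: stable}) are actually used.
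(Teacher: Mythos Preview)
Your proposal is correct and follows essentially the same Loynes-type route as the paper: establish $R^{\leftarrow}(t,s)\to-\infty$ from the negative drift under Assumption~\ref{assmpt: stable}, infer stationarity from shift-invariance of the marked arrival stream, and verify the workload recursion by splitting the backward supremum at an intermediate time and recombining into \eqref{eq: workload dynamics}. The paper carries out exactly this split (at time $0$, so the two ranges are $s>t$ and $0\le s\le t$) and arrives at the same identity; your version is the same computation written over a general interval $[t+s_1,t+s_2]$. One small point where you are in fact more careful than the paper: you argue finite-dimensional stationarity via the marked point process, whereas the paper only records that $\tilde{W}(t)\stackrel{d}{=}\tilde{W}(0)$ for each fixed $t$.
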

Similarly, we can also construct a stationary version of the observed busy period. For any $t\geq 0$, define 
\begin{equation}\label{eq: X def}
\tilde{X}(t) =  \argmax_{s\geq 0} R^{\leftarrow}(t,s) .
\end{equation}
\begin{proposition} \label{lmm: stationary X} 
The process $\tilde{X}(\cdot)$ is stationary. In addition, for each $t\geq 0$, $\tilde{X}(t)$ equals the observed busy period corresponding to the workload process $\tilde{W}(\cdot)$. 
\end{proposition}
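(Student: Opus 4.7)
The plan is to handle the two assertions separately. For stationarity of $\tilde{X}(\cdot)$, I would note that $\tilde{X}(t)$ is a measurable functional of the process $s \mapsto R^{\leftarrow}(t,s)$. From the stationarity of the two-ended Hawkes arrival process and the fact that job sizes $V_n$ are i.i.d.\ and independent of the arrivals, the entire family $\{R^{\leftarrow}(t,\cdot): t \in \mathbb{R}\}$ is shift-stationary in $t$ in the sense of finite-dimensional distributions (this is already the engine behind Proposition~\ref{prop: stationary workload}). Applying the argmax functional marginally at each $t$ and invoking joint measurability would give stationarity of $\tilde{X}(\cdot)$ immediately.

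For the second claim, let $\tau \triangleq \sup\{r \leq t : \tilde{W}(r) = 0\}$, so the observed busy period of $\tilde{W}(\cdot)$ at time $t$ is $t - \tau$. I would aim to show that $s = t - \tau$ is the (smallest) argmax of $s \mapsto R^{\leftarrow}(t,s)$. The key algebraic identity I would use is a telescoping decomposition: for any $s \geq 0$,
\begin{equation*}
R^{\leftarrow}(t,s) \;=\; R^{\leftarrow}(t, t-\tau) \;+\; R^{\leftarrow}(\tau, s - (t-\tau)),
\end{equation*}
valid for $s \geq t-\tau$, which follows directly from unwinding the definition of $R^{\leftarrow}$ and splitting the sum of arrivals at time $\tau$. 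For $s \in [0, t-\tau]$, I would instead use the busy-period dynamics of $\tilde W$: because $\tilde W$ is strictly positive on $(\tau,t]$, no idling occurs there, so the workload recursion \eqref{eq: workload dynamics} reduces to $\tilde W(t) - \tilde W(t-s) = R^{\leftarrow}(t,s)$ on that range.

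Combining these pieces gives the three cases I need: (i) on $[0, t-\tau)$, $R^{\leftarrow}(t,s) = \tilde W(t) - \tilde W(t-s) < \tilde W(t)$ since $\tilde W(t-s) > 0$; (ii) at $s = t-\tau$, $R^{\leftarrow}(t,t-\tau) = \tilde W(t)$; and (iii) for $s > t-\tau$, by the decomposition and the fact that $\tilde W(\tau) = \max_{s' \geq 0} R^{\leftarrow}(\tau,s') = 0$, we get $R^{\leftarrow}(t,s) = \tilde W(t) + R^{\leftarrow}(\tau, s-(t-\tau)) \leq \tilde W(t)$. Hence $s = t-\tau$ attains the supremum and is the smallest such $s$, so $\tilde X(t) = t - \tau$ under the standard convention that $\argmax$ picks the smallest maximizer.

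The main obstacle I anticipate is the non-uniqueness of the argmax: case (iii) achieves equality whenever there is an earlier regeneration point $s' > 0$ with $R^{\leftarrow}(\tau,s') = 0$, and the set of such $s'$ may be nontrivial. I would handle this by fixing the $\argmax$ convention (smallest maximizer) in \eqref{eq: X def}, which is the choice consistent with defining the \emph{current} observed busy period rather than an earlier one; alternatively, one can note that under Assumptions~\ref{assmpt: stable}, \ref{assmpt: light tail b}, \ref{assume: light-tail of V} and the continuity of $g(\cdot)$, such coincidences occur with probability zero, so the argmax is a.s.\ unique. The other minor care point is handling the left-continuity/right-continuity of $R^{\leftarrow}(t,\cdot)$ at arrival epochs, but this does not affect the identification of $\tau$.
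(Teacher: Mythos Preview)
Your proposal is correct and uses essentially the same machinery as the paper: both arguments rest on the telescoping identity $R^{\leftarrow}(t,s) = R^{\leftarrow}(t,u) + R^{\leftarrow}(t-u,s-u)$ to tie the argmax of $R^{\leftarrow}(t,\cdot)$ to the last zero of $\tilde W$. The only real difference is the direction of the argument. The paper sets $t_0 = t - \tilde X(t)$ and shows (i) $\tilde W(t_0) = 0$ via the telescoping identity and (ii) no later zero exists by contradiction; you instead fix $\tau = \sup\{r\leq t: \tilde W(r)=0\}$ and verify directly that $s=t-\tau$ is the smallest maximizer of $R^{\leftarrow}(t,\cdot)$ by splitting into the three ranges. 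Your route is slightly longer but has the advantage of making the argmax tie-breaking convention explicit, which the paper leaves implicit. Both proofs omit a separate argument for stationarity and simply inherit it from the shift-stationarity of $R^{\leftarrow}(t,\cdot)$ already used for Proposition~\ref{prop: stationary workload}, as you note.
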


\subsection{Moment of Stationary Distributions}\label{subsec: moment result}
Following the construction in Section \ref{subsec: CFTP construction}, we shall denote the stationary workload and the stationary observed busy periods of Hawkes$/GI/1$ queue by $\tilde{W}$ and $\tilde{X}$, respectively. Our first main result is on the existence of moments and moment generating function of $\tilde{W}$ and busy time $\tilde{X}$. 
\begin{thm}\label{thm: moment of W}
Under Assumptions \ref{assmpt: stable} and \ref{assmpt: light tail b}, the following statements are true.
\begin{enumerate}
	\item[(a)] If $\EE[V_1^{{k+1}}]<\infty$, then $\EE[\tilde{W}^k]<\infty$ and {$\EE[\tilde{X}^k]<\infty$}.
	\item [(b)] In addition, if Assumption \ref{assume: light-tail of V} holds, then
	$$\EE[\exp(\theta_1\tilde{W})]<\infty,\quad \text{and}\quad {\EE[\exp(\varepsilon\theta_1\tilde{X})]<\infty},$$
	where $\theta_1$ is a positive constant specified in \eqref{eq: theta1 def} and $\varepsilon$ is defined in Assumption \ref{assmpt: stable}.
\end{enumerate}
\end{thm}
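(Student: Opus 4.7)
The plan is to prove Theorem 1 via a stochastic domination argument built on the cluster representation of Definition 2, reducing the analysis to an auxiliary $M/GI/1$ queue to which classical moment results apply. Using the two-sided cluster representation from Section 3.1, I would group the stationary Hawkes arrivals by cluster and define, for each cluster $C_l$, the aggregate workload $V_{C_l}\triangleq \sum_{k=1}^{|C_l|} V_l^k$ and its immigrant time $t_l^1$. The auxiliary queue is then the single-server queue driven by ``super-jobs'' that arrive at $\{t_l^1\}$ with sizes $\{V_{C_l}\}$ and are served at rate $\mu$. Because the immigrant times form a homogeneous Poisson process of rate $\lambda_0$ on $\mathbb{R}$, and the clusters are i.i.d.\ and independent of these times, this is a stationary $M/GI/1$ queue with service-time distribution equal to that of $V_{C_l}/\mu$. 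Since each event $(l,k)$ satisfies $t_l^k\geq t_l^1$ while contributing the same workload $V_l^k$, the cumulative work input of the auxiliary queue dominates that of the Hawkes queue at every time $t$, so by the representation in Proposition 1 the coupled inequalities $\tilde{W}(t)\le \tilde{W}^*(t)$ and $\tilde{X}(t)\le \tilde{X}^*(t)$ hold pointwise, where $\tilde{W}^*,\tilde{X}^*$ denote the stationary workload and observed busy period of the auxiliary queue. It then suffices to bound moments of $\tilde{W}^*$ and $\tilde{X}^*$.

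The next step is to transfer moment and exponential-moment assumptions on $V$ to $V_{C_l}$. The cluster size $|C_l|$ is the total progeny of a subcritical Galton--Watson tree with Poisson$(m)$ offspring distribution, and since $m<1$ a standard branching argument yields $\EE[\exp(\eta|C_l|)]<\infty$ for some $\eta>0$; in particular $|C_l|$ has moments of all orders. A conditional Wald/Minkowski computation then gives $\EE[V_{C_l}^{k+1}]<\infty$ whenever $\EE[V^{k+1}]<\infty$, and under Assumption 3 a direct Laplace-transform calculation (composing the branching generating function with $\psi_V$) gives $\EE[\exp(\theta V_{C_l})]<\infty$ on a neighborhood of $0$. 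The stability condition for the auxiliary queue is $\lambda_0\EE[V_{C_l}]/\mu = \lambda_0\EE[V]/((1-m)\mu)<1$, which is guaranteed by Assumption 1 with safety margin $\varepsilon$. Part (a) for $\tilde{W}$ then follows from the standard $M/GI/1$ moment theorem (Pollaczek--Khinchine, or equivalently the random-walk formulation in \cite[Ch.~X]{asmussen2003applied}), and part (b) for $\tilde{W}$ follows from a Cramér--Lundberg argument: the tail of $\tilde{W}^*$ decays exponentially at a rate $\theta_1$ determined by the root of the Lundberg equation for the compound-Poisson input with drift $-\mu$, which matches the $\theta_1$ in \eqref{eq: theta1 def}.

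For the busy-period bounds I would invoke the classical random-walk representation of the $M/GI/1$ busy period $B^*$ as a first-passage time of $R^{\leftarrow,*}(0,\cdot)$, which gives $\EE[(B^*)^{k+1}]<\infty$ from $\EE[V_{C_l}^{k+1}]<\infty$, and then use that in stationarity $\tilde{X}^*$ is the age of the current busy cycle (zero during idle periods); a standard size-biasing/renewal identity yields $\EE[(\tilde{X}^*)^k]<\infty$, which combined with the coupling $\tilde{X}\le\tilde{X}^*$ establishes part (a). For part (b), the same representation yields $\EE[\exp(\theta\tilde{X}^*)]<\infty$ for all $\theta$ below the Cramér root of the busy-period Laplace equation. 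The main obstacle is pinning down the correct exponential decay rate for $\tilde{X}$: unlike the workload, whose exponent $\theta_1$ is determined by the input's Cramér root, the busy-period exponent is controlled by the slack in stability, and one must show that the admissible exponent scales at least linearly in $\varepsilon$. Concretely, a Taylor expansion of the busy-period Lundberg equation around $\theta=0$ produces a positive root of order $\varepsilon$ times a factor involving $\psi'_V$ and $\psi'_b$, and identifying this root with $\varepsilon\theta_1$ (up to the constants hidden in \eqref{eq: theta1 def}) is the delicate calculation; once done, the tail bound on $\tilde{X}^*$ transfers to $\tilde{X}$ by the coupling.
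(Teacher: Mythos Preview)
Your domination argument is incorrect: placing super-jobs at immigrant times does \emph{not} yield the pathwise inequalities $\tilde{W}(t)\le\tilde{W}^*(t)$ or $\tilde{X}(t)\le\tilde{X}^*(t)$. The claim that ``the cumulative work input of the auxiliary queue dominates that of the Hawkes queue at every time $t$'' fails precisely because of clusters whose immigrant arrives before a given window but whose descendants land inside it: those descendants contribute to the Hawkes input but not to your auxiliary. Equivalently, in the Loynes representation $\tilde{W}(0)=\sup_{s\ge0}R^{\leftarrow}(0,s)$, take a cluster with $t_l^1=-10$ and a single descendant at $-0.1$, each of size $1$, with $\mu=1$; for small $s$ the Hawkes backward input already equals $1$ while your auxiliary input is $0$, and one can arrange (with positive probability) that no other cluster interferes, giving $\tilde{W}(0)>\tilde{W}^*(0)$. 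More generally, pointwise domination of cumulative input from a fixed origin does \emph{not} imply workload domination, since the auxiliary server can finish the early-arriving work and idle while the Hawkes queue is still busy with the later-arriving descendants.

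The paper's remedy is twofold. For the workload, it builds the auxiliary $M/GI/1$ queue with super-jobs arriving at the cluster \emph{departure} times $\delta_l$ (which still form a rate-$\lambda_0$ Poisson process by reversibility), and crucially does \emph{not} claim a direct workload comparison: instead it proves $R^{\leftarrow}(t,s)\le \hat{R}^{\leftarrow}(t,s)+J_0^{\leftarrow}(t)$, so that $\tilde{W}(t)\le\hat{W}(t)+J_0^{\leftarrow}(t)$ with the correction term $J_0^{\leftarrow}(t)$ (the backward residual job size of active clusters) independent of $\hat{W}(t)$. Moments and the m.g.f.\ of $J_0^{\leftarrow}$ then require separate work (Proposition~\ref{prop: J0}). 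For the busy period, the paper abandons the $M/GI/1$ comparison entirely---it explicitly notes that the workload domination does not transfer to busy periods because of $J_0^{\leftarrow}$---and instead bounds $\tilde{X}(0)$ by the last-passage time $\tilde{\tau}$ of $R^{\leftarrow}(0,\cdot)$, then uses the pathwise identity $\varepsilon\tilde{\tau}=R^{\leftarrow}(0,\tilde{\tau})+\varepsilon\tilde{\tau}\le\sup_u\{R^{\leftarrow}(0,u)+\varepsilon u\}=\tilde{W}_\varepsilon(0)$, where $\tilde{W}_\varepsilon$ is the stationary workload of a Hawkes queue with service rate $\mu-\varepsilon$. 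This is what produces the exponent $\varepsilon\theta_1$ for $\tilde{X}$ in part~(b), and it sidesteps entirely the busy-period Lundberg analysis you outline.
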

Theorem \ref{thm: moment of W} is an analogue of theorem 2.1 in \cite[Chapter X]{asmussen2003applied} and theorem 5.7 in \cite[Chapter VIII]{asmussen2003applied}. For the proof of workload, the main idea is to bound $\tilde{W}(\cdot)$ from above using an M/GI/1 queue following the idea in \cite{perfect2020}. For the proof of the busy period, we connect the busy period with the workload of another Hawkes queue but with a smaller service rate. The proof details including the construction of the $M/GI/1$ queue and the Hawkes queue with a smaller service rate will be given in Section \ref{sec: stationary haweks queue}. 


\subsection{Exponential Ergodicity of Associated Queueing Process}\label{subsec: ergodicity result}

Our second main result is the geometric (exponential) ergodicity of the Hawkes/GI/1 queue.
Following the idea in \cite{onlinequeue2020}, we show that two Hawkes$/GI/1$ queues with different initial states can be properly coupled so that they will converge to each other exponentially fast, i.e., the dependence of the queueing process on the initial state of the system decays exponentially fast. In the setting of Hawkes$/GI/1$ queues, such dependence on the initial state can be attributed to two different sources: (i) the impact of initial congestion level in the queuing system, which is reflected by the initial values of the queueing process; (ii) the impact of Hawkes arrivals before time 0 on future arrivals. The proof method of \cite{onlinequeue2020} based on synchronous coupling can only deal with the first source of dependence in the setting of $GI/GI/1$. To deal with the second source of dependence that is caused by Hawkes arrivals, we shall design a new coupling called \textit{semi-synchronous coupling} utilizing the cluster representation of Hawkes processes introduced in Section \ref{subsec: hawkes basics}.

A key insight from the cluster representation is that at any time point $t$, the impact of Hawkes arrivals in the past on the future only lies in those clusters that arrive before time $t$ and depart after time $t$. To accurately describe this insight and our new coupling method, we need to introduce some extra notations. For each $t\in \mathbb{R}$, we define
$$N_0(t) =  \{(k,t_l^k,p_l^k,V_l^k): (k,t_l^k,p_l^k,V_l^k)\in C_l, s.t.~ t_l^1<t, \delta_l\geq t\},$$
where $\delta_l$ is the departure time of cluster $C_l$ as defined in Section \ref{subsec: hawkes basics}. In other words, $N_0(t)$ is a union of all clusters that arrive before time $t$ but last after time $t$. As the clusters that arrive after time $t$ are independent of clusters that arrive before time $t$, customer arrivals after time $t$ are independent of $W(t)$ conditional on $N_0(t)$.  We call the set $N_0(t)$ the \textit{Hawkes memory} at time $t$. At time $0$, $N_0(0)$ captures all impact of Hawkes arrivals before time 0 to future arrivals, so it is natural to include $N_0(0)$ as part of the initial system state.

Next, we introduce two stochastic processes that are derived from $N_0(\cdot)$ and useful in our proofs. First, we define  the \textit{total residual job size} at time $t$ as 
\begin{equation}\label{eq: J0}
J_0(t) = \sum_{(k,t_l^k,p_l^k,V_l^k)\in N_0(t), t_l^k\geq t} V_l^k,
\end{equation}
i.e., the total size of jobs that will arrive \textit{after} time $t$ from  clusters that have arrived before time $t$.  Then, we define the \textit{residual life time} of past history as 
$$L_0(t) \triangleq \max\{t_l^k: (k,t_l^k,p_l^k,V_l^k)\in N_0(t)\},$$
i.e., the remaining time to the last event of the past history.

Now we are ready to introduce our new coupling design for the Hawkes$/GI/1$ queue. Consider  two Hawkes$/GI/1$ queues with given initial workload $W^i(0)$, system busy time $X^i(0)$ and Hawkes memories $N_0^i(0)$ for $i=1,2$. We couple two systems such that they share the same customer arrivals (including their individual workload) from the clusters that arrive after time $0$, i.e., they share the same sequence of clusters after time $0$. We call this a \textit{semi-synchronous coupling} as the two systems do not exactly share the same arrivals and service times after time $0$ due to customers from $N_0^1(0)$ and $N_0^2(0)$. An illustration is given in Figure \ref{fig: semi-synch}.

\begin{figure}
\centering
\includegraphics[width=0.8\linewidth]{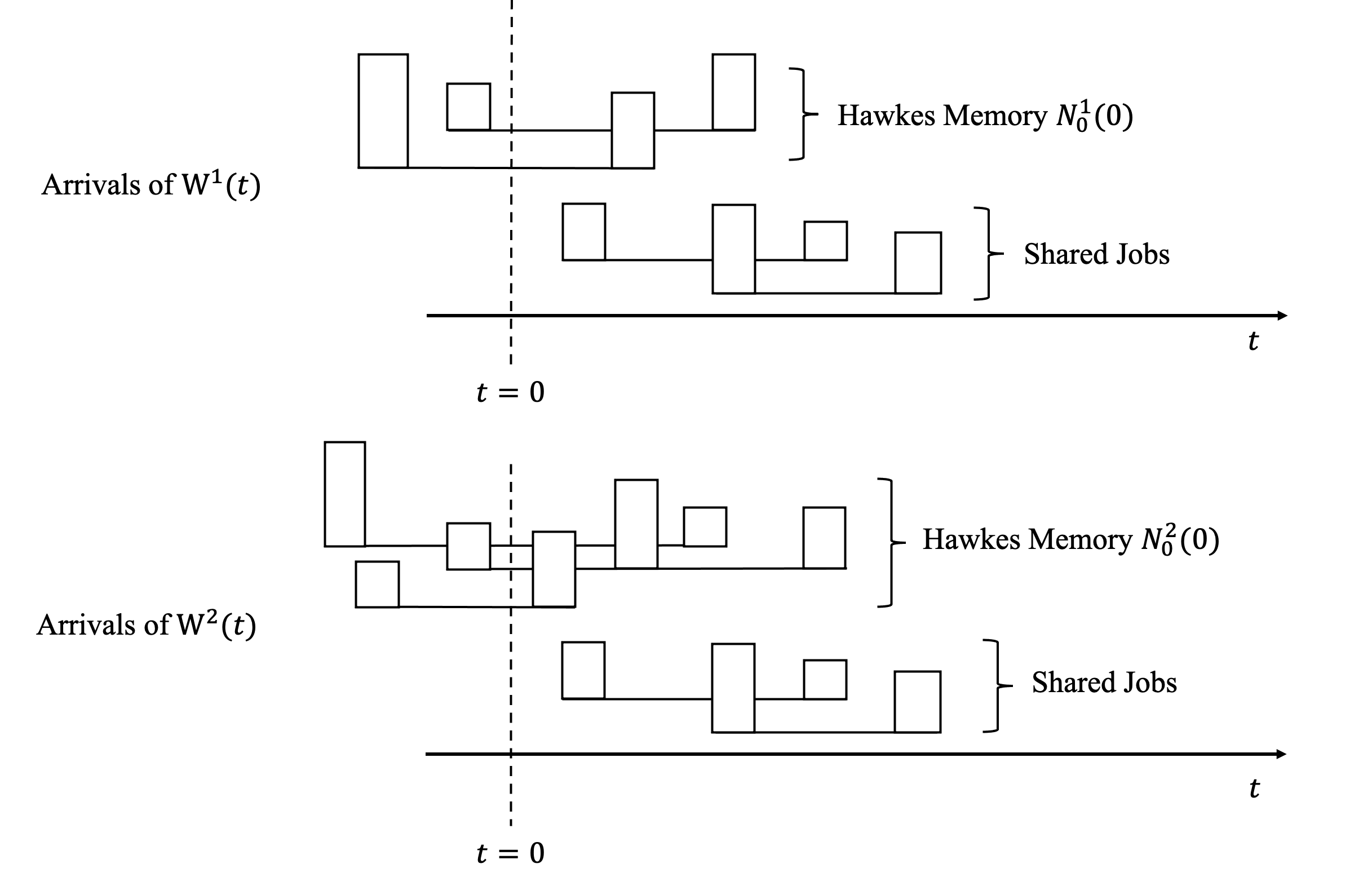}
\caption{Illustration of semi-synchronous coupling. Each rectangle represents a job and the jobs on the same line form a cluster. The Hawkes Memories $N_0^1(0),N_0^2(0)$ are different and the clusters coming after time 0 (shared jobs) are the same.}
\label{fig: semi-synch}
\end{figure}

In the Theorem \ref{thm: geometric ergodicity} below, we show that two Hawkes$/GI/1$ queues that are semi-synchronously coupled will converge to each other exponentially fast. The proof is given in Section \ref{sec: ergodicity}.

\begin{thm}
\label{thm: geometric ergodicity}
Suppose that two Hawkes/GI/1 queues with initial states $(W^1(0),X^1(0), N^1_0(0))$ and $(W^2(0), X^2(0), N^2_0(0))$ are  semi-synchronously coupled and that Assumptions \ref{assmpt: stable} to \ref{assume: light-tail of V} hold. Then, there exists two positive constants $\eta, \theta>0$ such that for all $m\geq 1$
$$
\EE\left[\vert W^1(t)-W^2(t)\vert^m\Big\vert\mathcal{F}_0\right]\leq 
D_0^m\sum_{i=1}^{2}\exp\left(-(\eta\mu t-(\eta+2\theta)\mu L_0^i-\theta(W^i(0)+J^i_0(0)))_+\right),
$$
and
$$\EE\left[\vert X^1(t)-X^2(t)\vert^m\Big\vert\mathcal{F}_0 \right]\leq \left(X^1(0)+X^2(0)+t\right)^m \sum_{i=1}^{2}\exp\left(-(\eta\mu t-(\eta+2\theta)\mu L_0^i-\theta(W^i(0)+J^i_0(0)))_+\right),$$
where $L_0^i(0)$ and $J^i_0(0)$ are the total residual job size and residual lifetime corresponding to $N^i_0(0)$ for $i=1,2$, respectively, and 
$$D_0\equiv |W^1(0)-W^2(0)|+\max(J_0^1, J_0^2),$$
measures the difference between the initial states  $(W^1(0),X^1(0), N^1_0(0))$ and $(W^2(0), X^2(0), N^2_0(0))$.
\end{thm}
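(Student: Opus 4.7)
The plan is to combine a deterministic pathwise bound on $|W^1-W^2|$ with a coupling argument showing the two systems fully merge after a random time $T^*$ with exponentially decaying tail. Decompose the net input of system $i$ as $Y^i(t)=W^i(0)+R_{\mathrm{sh}}(t)+R_{\mathrm{res}}^i(t)-\mu t$, where $R_{\mathrm{sh}}(t)$ is the cumulative work from the clusters that arrive after time $0$ (shared by both systems under the semi-synchronous coupling) and $R_{\mathrm{res}}^i(t)\in[0,J_0^i]$ is the cumulative residual work from $N_0^i(0)$ delivered to system $i$ by time $t$; in particular $R_{\mathrm{res}}^i(t)=J_0^i$ for all $t\geq L_0^i$. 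Since $R_{\mathrm{res}}^i\geq 0$, we have $|R_{\mathrm{res}}^1-R_{\mathrm{res}}^2|\leq\max(J_0^1,J_0^2)$, and by non-expansiveness of the Skorokhod reflection map in the sup-norm,
\begin{equation*}
\sup_{s\leq t}|W^1(s)-W^2(s)|\leq \sup_{s\leq t}|Y^1(s)-Y^2(s)|\leq |W^1(0)-W^2(0)|+\max(J_0^1,J_0^2)=D_0.
\end{equation*}

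Define the coupling time $T^*\triangleq\inf\{t\geq\max(L_0^1,L_0^2):W^1(t)=W^2(t)\}$. After time $\max(L_0^1,L_0^2)$ neither system receives any further residual arrival, so both are driven by the same shared input at the same service rate; consequently $W^1(s)=W^2(s)$ and $X^1(s)=X^2(s)$ for every $s\geq T^*$. Combining this with the pathwise bound above and the trivial estimate $X^i(t)\leq X^i(0)+t$ yields
\begin{equation*}
|W^1(t)-W^2(t)|^m\leq D_0^m\,\mathbf{1}\{T^*>t\}, \qquad |X^1(t)-X^2(t)|^m\leq (X^1(0)+X^2(0)+t)^m\,\mathbf{1}\{T^*>t\}.
\end{equation*}
It then suffices to show $\PP(T^*>t\mid\mathcal F_0)\leq\sum_{i=1}^{2}\exp\bigl(-(\eta\mu t-(\eta+2\theta)\mu L_0^i-\theta(W^i(0)+J_0^i))_+\bigr)$. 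After time $\max(L_0^1,L_0^2)$ the coupled pair reduces to a standard $GI/GI/1$ synchronous coupling driven by the shared arrivals, with initial workload difference bounded by $|W^1(\max L_0)-W^2(\max L_0)|\leq \sum_i(W^i(0)+J_0^i+R_{\mathrm{sh}}(\max L_0^j))$; by the synchronous-coupling arguments already established for $GI/GI/1$ systems in \cite{onlinequeue2020}, the post-$\max L_0$ coupling time admits an exponential tail governed by these initial workloads, so a union bound produces one term per system.

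Each term of this union bound is then controlled via a Chernoff estimate on the excursion length of a random walk with negative drift, whose increments are the total loads of shared Hawkes clusters minus $\mu$ times the cluster inter-arrival spans. By Assumptions \ref{assmpt: light tail b} and \ref{assume: light-tail of V} combined with the subcritical branching condition $m<1$, each cluster load has a finite mgf in a neighborhood of zero (its total progeny is a subcritical Galton--Watson total), and Assumption \ref{assmpt: stable} secures a strictly negative drift. The $(\eta+2\theta)\mu L_0^i$ term in the exponent will emerge from bounding the shared work accumulated during the pre-reset window $[0,L_0^i]$, which inflates the effective initial workload at time $\max L_0^j$, while the $\theta(W^i(0)+J_0^i)$ term accounts for the original initial workload and the total residual work. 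I expect the main obstacle to lie precisely in this Chernoff step: executing the estimate with the exact stated coefficients, uniformly in all three initial-state quantities $W^i(0)$, $J_0^i$ and $L_0^i$, requires $\theta$ to lie strictly below the radii of convergence of the mgfs of $b$, $V$ and the total cluster load, and $\eta$ small enough for all intermediate exponential moments to remain finite while still yielding a strictly positive net decay rate in $t$.
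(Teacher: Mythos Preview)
Your architecture is exactly the paper's: a deterministic pathwise bound $|W^1(t)-W^2(t)|\leq D_0$ (the paper proves this directly as Lemma~\ref{lmm: sychronous coupling1}; your Skorokhod--map argument is an equally valid route), a coupling time after which the two systems merge, and an exponential tail estimate on that coupling time obtained from a Chernoff/supermartingale bound on the cluster--level compound Poisson process. So the strategy is correct.

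Where your proposal has a genuine gap is in the union bound step. You define a single coupling time $T^*=\inf\{t\geq\max(L_0^1,L_0^2):W^1(t)=W^2(t)\}$ and then claim that ``a union bound produces one term per system'' with the $i$-th term depending only on $(L_0^i,W^i(0),J_0^i)$. But your analysis starts from time $\max(L_0^1,L_0^2)$ and your own displayed bound on $|W^1(\max L_0)-W^2(\max L_0)|$ already mixes both systems through $R_{\mathrm{sh}}(\max L_0^j)$; pursued as written this gives a single tail bound involving $\max_i L_0^i$ and both initial workloads, not the decoupled sum stated in the theorem. The paper's device for achieving the decoupling is different: it defines the \emph{system-specific} hitting times
\[
\tau_i=\inf\{t\geq L_0^i:\ W^i(t)=0\},\qquad i=1,2,
\]
and shows (Lemma~\ref{lmm: sychronous coupling2}) that both processes coincide for all $t\geq\max(\tau_1,\tau_2)$, so $T^*\leq\max(\tau_1,\tau_2)$ and $\PP(T^*>t\mid\mathcal F_0)\leq\sum_i\PP(\tau_i>t\mid\mathcal F_0)$. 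Because $\tau_i$ is the first time system $i$ empties after its \emph{own} $L_0^i$, each tail $\PP(\tau_i>t\mid\mathcal F_0)$ can be bounded purely in terms of $(L_0^i,W^i(0),J_0^i)$, which is exactly what the stated exponent requires.

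A second, smaller issue: after $\max(L_0^1,L_0^2)$ the shared arrivals are still Hawkes, not renewal, so invoking the $GI/GI/1$ synchronous-coupling result of \cite{onlinequeue2020} as a black box does not apply. The paper handles this (Proposition~\ref{prop: mixing time}) by dominating the Hawkes input at the cluster level by an $M/GI/1$ process---Poisson immigrant arrivals with total cluster loads $S_l$ as jobs---and constructing the explicit supermartingale $M_i(t)=\exp\bigl(\theta(W^i(L_0^i)+J_0(L_0^i))+\theta\hat R(L_0^i,t)+\eta\mu t\bigr)$. Your last paragraph does gesture at exactly this cluster-level random walk, so you have the right ingredient; but the $(\eta+2\theta)\mu L_0^i$ coefficient arises not from ``inflating the effective initial workload'' in a loose sense but from the specific chain $W^i(L_0^i)\leq W^i(0)+J_0^i(0)+\hat R(0,L_0^i)+\mu L_0^i$ (and similarly for $J_0(L_0^i)$) followed by a second supermartingale bound on $\exp(2\theta\hat R(0,L_0^i))$, and this is where $\theta,\eta$ must be chosen as in~\eqref{eq: theta and eta}.
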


\begin{remark}
The value of constants $\eta$ and $\theta$ will be specified in \eqref{eq: theta and eta} in Section \ref{sec: ergodicity}. In fact, this theorem could be modified to bound 
$$\EE[f(|W^1(t)-W^2(t)|)|\mathcal{F}_0]$$
for any monotonic $f(\cdot)$ with $f(0)=0$ by directly replace $D_0^m$ into $f(D_0)$ in the right-hand side.
\end{remark}
As a direct result, we obtain exponential convergence to steady-state distribution (exponential ergodicity) for the workload process and the observed busy period process of Hawkes$/GI/1$ queue (for exponential ergodicity, see \citep{MeynTweedie95Ergo}). Let $\PP^t(\cdot| W(0),X(0),N(0))$ be the probability measure of $(W(t),X(t),N(t))$ conditional on the initial state $(W(0),X(0),N(0))$ and let $\pi$ be the stationary probability measure $\PP^\infty(\cdot)$, then we have the following Corollary \ref{coro: convergence to steady state}.
\begin{corollary}[Exponential Ergodicity]\label{coro: convergence to steady state} 
Under Assumption \ref{assmpt: stable} to \ref{assume: light-tail of V}, we have for any initial state $(W(0),X(0),N_0(0))$,
$$\|\PP^t\left(\cdot|~W(0),X(0),N_0(0)\right)-\pi\|_{TV}<\min\{e^{-\eta\mu t}(M_0+\exp((\eta+2\theta)\mu L_0+\theta (W(0)+J_0(0)))),1\}$$
for a constant $M_0>0$. 
\end{corollary}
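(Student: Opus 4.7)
The plan is a standard coupling argument feeding into Theorem \ref{thm: geometric ergodicity}. I set up two Hawkes$/GI/1$ queues: the first, $(W^1, X^1, N_0^1)$, starts from the given deterministic state $(W(0), X(0), N_0(0))$; the second, $(W^2, X^2, N_0^2)$, starts from the stationary distribution $\pi$. They are driven jointly by the semi-synchronous coupling of Section \ref{subsec: ergodicity result}: same post-$0$ cluster arrivals, same i.i.d.\ service sizes, independent initial Hawkes memories. By Propositions \ref{prop: stationary workload} and \ref{lmm: stationary X}, the second process is marginally stationary, so the coupling inequality yields
$$\|\PP^t(\cdot\,|\, W(0), X(0), N_0(0)) - \pi\|_{TV} \leq \PP\bigl((W^1, X^1, N_0^1)(t) \neq (W^2, X^2, N_0^2)(t)\bigr).$$

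Next I would bound this non-coincidence probability by reducing it to the workload gap already controlled by Theorem \ref{thm: geometric ergodicity}. The key observation is that under semi-synchronous coupling, for every $s \geq \max(L_0^1, L_0^2)$ the two initial memories are exhausted, so $N_0^1(s) = N_0^2(s)$ automatically, and the observed busy periods $X^i(s)$ are then measurable with respect to the workload paths under the shared arrivals. Hence on the event $\{t \geq \max(L_0^1, L_0^2)\}$, non-coincidence of the joint states at time $t$ reduces to $\{W^1(t) \neq W^2(t)\}$. Applying the remark following Theorem \ref{thm: geometric ergodicity} with the monotone indicator $f(x) = \ind{x > 0}$ (for which $f(0) = 0$ and $f(D_0) \leq 1$) converts the $L^m$-style bound into an exact-coupling bound,
$$\PP(W^1(t) \neq W^2(t)\,\vert\,\mathcal{F}_0) \leq \sum_{i=1}^{2} \exp\bigl(-(\eta\mu t - (\eta+2\theta)\mu L_0^i - \theta(W^i(0) + J_0^i(0)))_+\bigr),$$
with the trivial bound of $1$ producing the outer $\min\{\cdot, 1\}$.

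Averaging over the stationary initial state of the second process gives the final bound. The $i = 1$ summand retains the deterministic quantities $L_0, J_0(0), W(0)$ and contributes $e^{-\eta\mu t}\exp((\eta+2\theta)\mu L_0 + \theta(W(0) + J_0(0)))$. The $i = 2$ summand, after taking $\EE_\pi$, becomes $e^{-\eta\mu t} M_0$ with
$$M_0 := \EE_\pi\bigl[\exp\bigl((\eta+2\theta)\mu \tilde{L}_0 + \theta(\tilde{W}(0) + \tilde{J}_0(0))\bigr)\bigr],$$
which is finite by Theorem \ref{thm: moment of W}(b) together with analogous exponential-moment bounds on the stationary residual lifetime and residual job size that come routinely from the cluster representation under Assumptions \ref{assmpt: light tail b} and \ref{assume: light-tail of V}.

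The main obstacle is the step where I apply the remark with the discontinuous $f = \ind{\cdot > 0}$: this requires that the coupling of Theorem \ref{thm: geometric ergodicity} actually produces \emph{equality} $W^1(t) = W^2(t)$ on the favorable event, not merely a small $L^m$ gap. If the remark does not cover this choice of $f$ directly, I would reinforce the argument by a two-phase coupling. After time $\max(L_0^1, L_0^2)$, the arrival streams are identical, so $|W^1 - W^2|$ is monotone non-increasing, and any simultaneous idle epoch permanently merges the states. An exponential tail on the waiting time to such a joint idle epoch follows by applying Theorem \ref{thm: geometric ergodicity} to the residual workload combined with a uniform positive lower bound on the probability of an idle epoch in any unit interval implied by Assumption \ref{assmpt: stable}; this preserves the rate $\eta\mu$ claimed in the corollary.
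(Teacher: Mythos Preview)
Your overall strategy---couple the given-state process with a stationary copy via the semi-synchronous coupling and bound the total variation distance by the probability that the two copies have not yet merged---is exactly what the paper does. There is, however, a real gap in your main reduction step. From the facts that for $s\ge\max(L_0^1,L_0^2)$ the Hawkes memories coincide and that $X^i(s)$ is measurable with respect to the workload \emph{path}, you cannot conclude that non-coincidence of the full state at time $t$ reduces to $\{W^1(t)\neq W^2(t)\}$. Equality of the workloads at the single time $t$ does not force $X^1(t)=X^2(t)$: one process may have last visited zero more recently than the other even though their current workloads happen to agree. So bounding $\PP(W^1(t)\neq W^2(t))$ via the remark with $f=\ind{\cdot>0}$---which, incidentally, is perfectly legitimate for that remark, since its proof goes through the pathwise inequality $f(|W^1(t)-W^2(t)|)\le f(D_0)\ind{\max(\tau_1,\tau_2)>t}$---does not by itself control the total variation distance on the full state $(W,X,N_0)$.

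The clean fix, which the paper uses and which your fallback paragraph is essentially groping toward, is to bypass Theorem~\ref{thm: geometric ergodicity} and invoke its ingredients directly. By Lemma~\ref{lmm: sychronous coupling2}, the \emph{entire} states $(W^i,X^i,N_0^i)$ coincide for all $t\ge\max(\tau_1,\tau_2)$, where $\tau_i=\min\{t\ge L_0^i:W^i(t)=0\}$; then Proposition~\ref{prop: mixing time} gives the exponential tail $\PP(\tau_i>t\mid\mathcal{F}_0)\le e^{-\eta\mu t}\exp\bigl((\eta+2\theta)\mu L_0^i+\theta(W^i(0)+J_0^i(0))\bigr)$. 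Summing over $i$, averaging the stationary-copy term over $\pi$, and taking the trivial bound $1$ yields the corollary, with $M_0=\EE_\pi[\exp((\eta+2\theta)\mu\tilde{L}_0+\theta(\tilde{W}(0)+\tilde{J}_0(0)))]$ finite exactly as you describe. The key point your main argument misses is that the merging time is $\max(\tau_1,\tau_2)$, not the memory-exhaustion time $\max(L_0^1,L_0^2)$, and only after the former are all three components guaranteed to agree.
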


\section{Proof of Theorem \ref{thm: moment of W}}\label{sec: stationary haweks queue}
We first introduce an auxiliary M/GI/1 queue to dominate the workload of the Hawkes/GI/1 queue in Section \ref{subsec: mG1}. Then, the analysis on the moments of steady-state workload $\tilde{W}$ and $\tilde{X}$ are given in Section \ref{subsec: workload moments} and \ref{subsec: busy period moments}, respectively.

\subsection{An Auxiliary Dominant $M/GI/1$ Queue}\label{subsec: mG1}
We consider a single-server queue coupled with the Hawkes$/GI/1$ queue such that its customers arrive at $\{\delta_l: l=\pm1, \pm2,\cdots\}$, i.e., departure times of the clusters of the stationary Hawkes process. By reversibility, one can check that $\{\delta_l: l=\pm1, \pm2,...\}$ forms a two-ended Poisson process with rate $\lambda_0$ on $(-\infty,\infty)$. We let the job size brought by customer $l$ equal to
$$S_l \equiv \sum_{k=1}^{\vert C_l\vert} V_l^k, \text{ for }l = \pm 1, \pm 2, ...,$$
i.e., the total amount of workload brought by all customers from cluster $C_l$ of the Hawkes$/GI/1$ queue. Following Definition \ref{def: cluster hawkes}, $\{S_l\}_l$ is an i.i.d. sequence.
In addition, by Assumption \ref{assmpt: stable}, 
$$\lambda_0\EE[S_1]= \lambda_0\frac{\EE[V_1]}{1-m}<\underline{\mu}-\varepsilon<\mu.$$
Therefore, the system as described above is indeed a stable M/GI/1 queue. For all $t\in\mathbb{R}$ and $s\geq 0$, define
$$\hat{R}^{\leftarrow}(t,s) = \sum_{t-s\leq \delta_l<t} S_l -\mu s.$$
Following the same argument as in Proposition \ref{prop: stationary workload}, this $M/GI/1$ queue has a stationary workload process
$$\hat{W}(t) =\max_{s\geq 0} \hat{R}^{\leftarrow}(t,s).$$ 
Similar to $J_0(t)$, for any $t\in\mathbb{R}$, define the \textit{backward total residual job size} as  
\begin{equation}\label{eq: backward J_0}
J^\leftarrow_0(t) = \sum_{(k,t_l^k,p_l^k,V_l^k)\in N_0(t), t_l^k< t} V_l^k.
\end{equation}
Then, the auxiliary $M/GI/1$ queue dominates the Hawkes$/GI/1$ queue in the following sense: the stationary workload $\tilde{W}(t)$ of the Hawkes queue can be bounded by the stationary workload  $\hat{W}(t)$ of the M/GI/1 queue plus the backward total residual job size $J^\leftarrow_0(t)$.
\begin{proposition}\label{prop: mG1}  For any $t\geq 0$, 
\begin{enumerate}
	\item[(a)] $\tilde{W}(t)\leq \hat{W}(t)+J^\leftarrow_0(t)$;
	\item[(b)] $\hat{W}(t)$ and $J^\leftarrow_0(t)$ are independent.
\end{enumerate}
\end{proposition}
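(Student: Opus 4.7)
The plan is to prove the two assertions separately but using the same decomposition: partition the i.i.d. clusters of the stationary Hawkes process into those that have completely finished by time $t$ (i.e., $\delta_l < t$, call these ``finished'') and those still alive at time $t$ (i.e., $t_l^1 < t \leq \delta_l$, call these ``active'', which is exactly the memory $N_0(t)$). Part (a) will be a pathwise inequality obtained by comparing the cluster-by-cluster contributions to $R^{\leftarrow}(t,s)$ and $\hat R^{\leftarrow}(t,s)$, and part (b) will follow from the marking/thinning property of the underlying Poisson immigrant process.

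For part (a), I plan to fix $t$ and $s\geq 0$ and write
\[
R^{\leftarrow}(t,s) + \mu s \;=\; \sum_{l}\,\Delta_l(t,s),\qquad \Delta_l(t,s)\triangleq\sum_{k:\,t-s\leq t_l^k\leq t} V_l^k,
\]
where the outer sum runs over all clusters. Clusters with $t_l^1 > t$ contribute zero. For a finished cluster ($\delta_l < t$), if $\Delta_l(t,s) > 0$ then some event lies in $[t-s,t]$, hence $\delta_l\geq t-s$, so the whole cluster enters $\hat R^{\leftarrow}(t,s)$ with mass $S_l\geq \Delta_l(t,s)$; summing gives $\sum_{l:\,\delta_l<t}\Delta_l(t,s)\leq \hat R^{\leftarrow}(t,s)+\mu s$. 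For an active cluster ($\delta_l\geq t$), $\Delta_l(t,s)\leq \sum_{k:\,t_l^k<t} V_l^k$ (using that no event occurs exactly at $t$ a.s.), so summing over active clusters gives $\sum_{l\text{ active}}\Delta_l(t,s)\leq J_0^{\leftarrow}(t)$. Adding these two inequalities and subtracting $\mu s$ yields $R^{\leftarrow}(t,s)\leq \hat R^{\leftarrow}(t,s)+J_0^{\leftarrow}(t)$ for every $s\geq 0$; taking the supremum in $s$ on both sides (noting $J_0^{\leftarrow}(t)$ is $s$-free) gives $\tilde W(t)\leq \hat W(t)+J_0^{\leftarrow}(t)$.

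For part (b), I will view the immigrant stream as a marked Poisson process on $\mathbb{R}$ with rate $\lambda_0$, whose mark at arrival time $t_l^1$ is the whole cluster structure (birth times, generation sizes, and job sizes), with marks i.i.d. and independent of the arrival times. Classify each cluster as ``finished'' (F) if $\delta_l<t$ or ``active'' (A) if $\delta_l\geq t$; this classification depends only on the pair (arrival time, mark) of that cluster. By the marking theorem, the F-clusters and A-clusters form two independent marked point processes. Now $\hat W(t)$ is a measurable function of the arrival times $\delta_l$ and total sizes $S_l$ of the F-clusters alone, while $J_0^{\leftarrow}(t)$ is a measurable function of the pre-$t$ events and job sizes of the A-clusters alone. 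Independence of $\hat W(t)$ and $J_0^{\leftarrow}(t)$ is then immediate.

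The main obstacle I anticipate is the bookkeeping in part (a): making sure that each cluster's contribution is assigned to the correct one of the three bounds ($\hat R^{\leftarrow}$ via $S_l$, $J_0^{\leftarrow}$, or zero) without double-counting, and handling the boundary case of a cluster with $t_l^1<t-s\leq\delta_l<t$ whose full $S_l$ gets counted on the right while only a portion appears on the left (this is harmless since we want a $\leq$, but it is the reason the inequality is strict in general). The marking argument in part (b) is essentially routine once the split is phrased in terms of the cluster mark, but writing it carefully requires being explicit that the classification is a function of the mark and arrival time, not of anything else in the system.
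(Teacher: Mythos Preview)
Your proposal is correct and follows essentially the same route as the paper: the paper also decomposes the arrivals on $[t-s,t]$ cluster-by-cluster into those with $t-s\leq \delta_l<t$ (your ``finished'' clusters, bounded by $S_l$ and hence by $\hat R^{\leftarrow}(t,s)$) and those with $t_l^1<t\leq \delta_l$ (your ``active'' clusters, bounded by $J_0^{\leftarrow}(t)$), then takes the supremum in $s$; and for (b) the paper invokes exactly the independence of clusters that depart before $t$ versus those that depart after $t$, which you have phrased more explicitly via the Poisson marking theorem. Your write-up is in fact slightly more careful than the paper's (you justify why $\delta_l\geq t-s$ whenever a finished cluster contributes, and you spell out the measurability needed for the thinning argument), but the decomposition, the bounds, and the independence mechanism are identical.
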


\subsection{Moments of the Stationary Workload}\label{subsec: workload moments}
Given Proposition \ref{prop: mG1}, in order to show that $\tilde{W}(0)$ has finite moment, it suffices to show that both  $\hat{W}(0)$ and $J_0^\leftarrow(0)$ have finite moment. Note that $\hat{W}(0)$ is the stationary workload process of an $M/GI/1$ queue and has been extensively studied in the literature (see \cite[Chapter X]{asmussen2003applied}, \cite[Chapter 9]{wolff89} and references therein), so what remains is to bound the moments of $J_0^{\leftarrow}(0)$. We first introduce some technical lemmas.

\begin{lemma}\label{lmm: light-tail workload assumption}
There exist constants $\theta_0,\theta_1>0$ such that
\begin{equation}\label{eq: theta0 def}
	\log\psi_b(\theta_0)= \log \left(\int_0^{\infty}\exp(\theta_0t)f(t)dt\right) < m-1-\log(m),
\end{equation} 
\begin{equation}
	\label{eq: theta1 def}
	\psi_V(\theta_1)<\psi_b(\theta_0/2),\quad\text{and}\quad\frac{\lambda_0}{\underline{\mu}{-\varepsilon}}(\psi_{S_1}(\theta_1)-1)-\theta_1<0,
\end{equation}
where $\psi_V$ and $\psi_{S_1}$ are the moment generating functions of $V_1$ and $S_1$. 
\end{lemma}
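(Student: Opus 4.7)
The plan is to construct $\theta_0$ and $\theta_1$ sequentially by continuity arguments at the origin: each target inequality either is strict at $\theta=0$, or holds with equality but with a strictly negative one-sided derivative. At $\theta=0$ the moment generating functions $\psi_b$, $\psi_V$, $\psi_{S_1}$ all equal $1$, and they are smooth in a right-neighborhood of $0$ (the first two by Assumptions \ref{assmpt: light tail b} and \ref{assume: light-tail of V}; the third once the radius-of-convergence question is settled).

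For \eqref{eq: theta0 def}, the left side vanishes at $\theta=0$. The right side $h(m)\equiv m-1-\log m$ is strictly positive on $(0,1)$ since $h(1)=0$ and $h'(m)=1-1/m<0$ for $m<1$. Continuity of $\log\psi_b$ at $0$ then yields any sufficiently small $\theta_0>0$. The relevance of this particular threshold is that, via the cluster representation, $|C_1|$ is the total progeny of a Galton--Watson tree with $\mathrm{Poisson}(m)$ offspring distribution, and a standard computation shows $\EE[s^{|C_1|}]<\infty$ exactly when $\log s\le h(m)$.

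For \eqref{eq: theta1 def}, I handle the two conditions in turn. The first, $\psi_V(\theta_1)<\psi_b(\theta_0/2)$, is strict at $\theta_1=0$ because $\psi_b(\theta_0/2)>\psi_b(0)=1=\psi_V(0)$ (since $\psi_b$ is strictly increasing on $\mathbb{R}_+$), and continuity of $\psi_V$ near $0$ supplies an interval $(0,\delta_1)$ of valid $\theta_1$. Moreover, log-convexity of $\psi_b$ gives $\log\psi_b(\theta_0/2)\le \tfrac12\log\psi_b(\theta_0)<\tfrac12 h(m)$, so every such $\theta_1$ satisfies $\psi_V(\theta_1)<e^{h(m)}$, whence by conditioning on $|C_1|$
\[
\psi_{S_1}(\theta_1)=\EE\bigl[\psi_V(\theta_1)^{|C_1|}\bigr]<\infty.
\]
Thus $\psi_{S_1}$ is finite on $(0,\delta_1)$ and smooth there. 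For the second condition, set $g(\theta)=\frac{\lambda_0}{\underline{\mu}-\varepsilon}(\psi_{S_1}(\theta)-1)-\theta$; then $g(0)=0$ and
\[
g'(0)=\frac{\lambda_0}{\underline{\mu}-\varepsilon}\,\EE[S_1]-1=\frac{\lambda_0}{(1-m)(\underline{\mu}-\varepsilon)}-1<0
\]
by Assumption \ref{assmpt: stable}, using $\EE[V]=1$ and $\EE[S_1]=\EE[V]/(1-m)$. So $g<0$ on some interval $(0,\delta_2)$, and any $\theta_1\in(0,\min(\delta_1,\delta_2))$ satisfies both parts of \eqref{eq: theta1 def}.

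The only delicate aspect is the ordering of the two choices and the bookkeeping around the domain of $\psi_{S_1}$: $\theta_0$ must be picked first, small enough that $\psi_b(\theta_0/2)$ lies strictly below the total-progeny convergence radius $e^{h(m)}$; only then does the requirement $\psi_V(\theta_1)<\psi_b(\theta_0/2)$ simultaneously deliver the first inequality in \eqref{eq: theta1 def} and ensure that $\psi_{S_1}(\theta_1)$ is finite, so that the second inequality is even well-posed.
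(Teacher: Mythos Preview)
Your proof is correct and follows essentially the same route as the paper: positivity of $m-1-\log m$ on $(0,1)$ plus continuity of $\psi_b$ at $0$ gives $\theta_0$; then $\psi_b(\theta_0/2)>1=\psi_V(0)$ and continuity give the first inequality in \eqref{eq: theta1 def}; and the auxiliary function $g(\theta)=\frac{\lambda_0}{\underline{\mu}-\varepsilon}(\psi_{S_1}(\theta)-1)-\theta$ with $g(0)=0$, $g'(0)<0$ by Assumption~\ref{assmpt: stable} handles the second.

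Two minor differences worth noting. First, you are more careful than the paper about the domain of $\psi_{S_1}$: you explicitly use log-convexity of $\psi_b$ and the total-progeny bound $\EE[s^{|C_1|}]<\infty$ for $\log s<m-1-\log m$ to guarantee $\psi_{S_1}(\theta_1)<\infty$ before invoking it, whereas the paper simply asserts $\psi_{S_1}$ is smooth near $0$ via the composition $\psi_{|C_1|}(\log\psi_V(\theta))$. Second, the paper goes a step further and shows $g''>0$, so that $g$ has a unique positive root $\bar\theta$ and is negative on all of $(0,\bar\theta)$; it then sets $\theta_1=\min(\theta_0',\bar\theta/2)$. This sharper localization is not needed for the lemma as stated, so your local argument via $g'(0)<0$ suffices.
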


\begin{proposition}\label{prop: J0} Under Assumptions \ref{assmpt: stable} and \ref{assmpt: light tail b}, the following statements are true.
\begin{enumerate}
	\item[(a)] If $\EE[V^{n}]<\infty$, $\EE[J_0(0)^n]$ and  $\EE[J_0^\leftarrow(0)^n]<\infty$.
	\item [(b)] If in addition, Assumption \ref{assume: light-tail of V} holds, then
	$$\EE[\exp(\theta_1 J_0(0))],\quad \EE[\exp(\theta_1 J_0^\leftarrow(0))]<\infty,$$
	with $\theta_1$ specified in \eqref{eq: theta1 def}, and $\psi_V(\cdot),\psi_b(\cdot)$ being the m.g.f. of $V_1$ and $b$ respectively.
\end{enumerate}
\end{proposition}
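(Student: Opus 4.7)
The plan is to reduce both $J_0(0)$ and $J_0^{\leftarrow}(0)$ to a common compound-Poisson functional over the cluster-arrival Poisson process, and then extract the required bounds via Campbell's formula. The key observation is that every job counted in $J_0(0)$ or in $J_0^{\leftarrow}(0)$ must lie in a cluster whose arrival time $t_l^1$ is negative and whose departure time $\delta_l$ is non-negative. Writing $L_l := \delta_l - t_l^1$ for the lifetime of cluster $C_l$, this gives the pointwise bound
$$J_0(0) + J_0^{\leftarrow}(0) \leq K := \sum_l S_l\,\mathbf{1}\{t_l^1<0,\ L_l\geq -t_l^1\},$$
so it suffices to establish the conclusions for $K$.

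Since $\{t_l^1\}$ is a Poisson process on $\mathbb{R}$ of rate $\lambda_0$ and the marks $(S_l,L_l)$ are i.i.d., the exponential Campbell formula (Laplace functional for marked Poisson processes) yields
$$\EE\bigl[\exp(\theta K)\bigr] = \exp\Bigl(\lambda_0 \int_0^\infty \EE\bigl[(e^{\theta S_1}-1)\mathbf{1}\{L_1\geq u\}\bigr]\,du\Bigr) = \exp\Bigl(\lambda_0\,\EE\bigl[(e^{\theta S_1}-1)L_1\bigr]\Bigr),$$
so part (b) reduces to showing $\EE[e^{\theta_1 S_1}L_1]<\infty$. My plan is to invoke the elementary inequality $L_1\leq (2/\theta_0)\,e^{\theta_0 L_1/2}$ to convert this to the joint MGF $\EE\bigl[e^{\theta_1 S_1 + (\theta_0/2)L_1}\bigr]$, and then exploit the cluster description from Definition \ref{def: cluster hawkes}: conditional on $\vert C_1\vert$, the job sizes $\{V_1^k\}$ are i.i.d.\ with MGF $\psi_V$, the birth times $\{b_1^k:k\geq 2\}$ are i.i.d.\ with MGF $\psi_b$, and the two collections are independent. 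Combining this with the crude bound $L_1\leq \sum_{k=2}^{\vert C_1\vert} b_1^k$ yields
$$\EE\bigl[e^{\theta_1 S_1 + (\theta_0/2)L_1}\bigr] \leq \EE\Bigl[\bigl(\psi_V(\theta_1)\,\psi_b(\theta_0/2)\bigr)^{\vert C_1\vert}\Bigr].$$
Since $\vert C_1\vert$ follows the Borel$(m)$ distribution (total progeny of a Galton-Watson process with Poisson$(m)$ offspring), whose probability generating function has radius of convergence $e^{m-1}/m$, the remaining task is to verify $\psi_V(\theta_1)\,\psi_b(\theta_0/2) < e^{m-1}/m$. Here Lemma \ref{lmm: light-tail workload assumption} closes the loop: $\psi_V(\theta_1)<\psi_b(\theta_0/2)$ combined with the Jensen inequality $\psi_b(\theta_0/2)^2\leq \psi_b(\theta_0)$ and the defining bound \eqref{eq: theta0 def} places the product strictly inside the radius of convergence.

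For part (a), I plan to use that $\EE[K^n]$ is a complete Bell polynomial in the cumulants $\kappa_k := \lambda_0\,\EE[S_1^k L_1]$ for $k=1,\dots,n$, so it suffices to check $\EE[S_1^k L_1]<\infty$ for each $k\leq n$. Conditioning on $\vert C_1\vert$ and applying the convexity bound $(x_1+\cdots+x_N)^k \leq N^{k-1}\sum_j x_j^k$ together with $L_1\leq \sum_{j=2}^{\vert C_1\vert} b_1^j$ and $\EE[b]<\infty$ (implied by Assumption \ref{assmpt: light tail b}) yields $\EE[S_1^k L_1] \leq \EE[b]\,\EE[V^k]\,\EE\bigl[\vert C_1\vert^{k+1}\bigr]$; all three factors are finite because $\vert C_1\vert$ has all moments when $m<1$ and $\EE[V^k]\leq \EE[V^n]^{k/n}<\infty$ by Lyapunov's inequality.

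The main obstacle I anticipate is the delicate exponent pairing needed to keep the joint MGF $\EE[e^{\theta_1 S_1+(\theta_0/2)L_1}]$ strictly inside the radius of convergence $e^{m-1}/m$ of the Borel generating function. The custom choice of $(\theta_0,\theta_1)$ built into Lemma \ref{lmm: light-tail workload assumption}, together with the Jensen-style doubling trick $\psi_b(\theta_0/2)^2 \leq \psi_b(\theta_0)$, is precisely what makes this succeed.
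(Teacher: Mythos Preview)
Your argument is correct and is in fact cleaner than the paper's. Both proofs begin by bounding $J_0(0)$ and $J_0^\leftarrow(0)$ by the total job size carried by clusters straddling time~$0$, but the paper then invokes Poisson thinning to view the straddling clusters as a Poisson number $N_{B_0}$ of i.i.d.\ marks. That forces it to work with the \emph{conditional} law of $(S_l,|C_l|)$ given the straddling event $\{TB_l>-t_l^1\}$, which is size-biased toward long clusters; the bulk of the paper's proof of (b) is a stochastic-ordering argument to undo this bias. Your use of the Laplace functional (Campbell's formula) sidesteps the bias entirely: the identity $\EE[e^{\theta K}]=\exp\bigl(\lambda_0\,\EE[(e^{\theta S_1}-1)L_1]\bigr)$ folds the straddling indicator directly into the unconditional expectation, so you only ever need the joint law of $(S_1,L_1)$ for a \emph{generic} cluster. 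The remaining steps---the bound $L_1\le\sum_{k\ge 2}b_1^k$, the conditional independence of $\{V_1^k\}$ and $\{b_1^k\}$ given $|C_1|$, the Cauchy--Schwarz/Jensen step $\psi_b(\theta_0/2)^2\le\psi_b(\theta_0)$, and the Borel-distribution radius of convergence $e^{m-1}/m$---are essentially shared with the paper, just assembled more directly. For part~(a), your cumulant/Bell-polynomial reduction to $\EE[S_1^kL_1]<\infty$ is again more economical than the paper's route through $\EE[\,|C_l|^n\mid TB_l>-t_l^1]$.
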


Now we are ready to prove that $\tilde{W}$ has finite moments and moment generating function under proper assumptions on the service time distribution. Following  Proposition \ref{prop: mG1},  we have
$$\tilde{W}(0)^k\leq \left(\hat{W}(0)+J^\leftarrow_0(0)\right)^k\leq 2^{k-1} \left(\hat{W}(0)^k+(J^\leftarrow_0(0))^k\right).$$
Recall that $\hat{W}(0)$ is the stationary workload process of an $M/GI/1$ queue with job size $S_l$ and arrival rate $\lambda_0$. It's known that as long as $\EE[S_l^{k+1}]<\infty$, the stationary workload $\hat{W}(0)$  has finite $k$-th moment (see theorem 2.1 in \cite[Chapter X]{asmussen2003applied}). We compute
$$\EE\left[S_l^{k+1}\right]=\EE\left[\left(\sum_{i=1}^{\vert C_l\vert}V_l^i\right)^{k+1}\right]\leq\EE[\vert C_l\vert ^{k+1}]\EE[V_1^{k+1}],$$
where the last inequality holds as $V_k^l$ are i.i.d and independent of $\vert C_l\vert$. 
Note that $|C_l|$ is the total number of descendants in a Poisson branching process and since $m<1$, $|C_l|$ follows Borel distribution, which has any finite $k$-moment \citep{dwass1969total}. Therefore, $\EE[S_l^{k+1}]<\infty$. 
By Proposition \ref{prop: J0}, $\EE[(J_0^\leftarrow(0))^k]<\infty$.  Therefore,
$$\EE[\tilde{W}(0)^k]\leq 2^{k-1}\left(\EE[\hat{W}(0)^k]+\EE[(J_0^\leftarrow(0))^k]\right)<\infty.$$

We next verify the workload part of statement (b).
By Proposition \ref{prop: mG1}, $\hat{W}(0)$ is independent of $J_0$. So, 
$$\EE[\exp(\theta_1\tilde{W}(0))]\leq \EE[\exp(\theta_1\hat{W}(0)+\theta_1 J^\leftarrow_0(0))]=\EE[\exp(\theta_1\hat{W}(0))]\EE[\exp(\theta_1 J^\leftarrow_0(0))].$$
From Proposition \ref{prop: J0}, we have $\EE[\exp(\theta_1 J^\leftarrow_0(0))]<\infty$. We next show that $\EE[\exp(\theta_1 \hat{W}(0))]<\infty$.
Notice that $\hat{W}(0)$ is the stationary workload of an $M/GI/1$ queue. Then, by transformation version of the Pollaczek-Khinchin formula (see Section 5.1.2 of \cite{daigle2005queueing}), 
$$\EE[\exp(\theta_1\hat{W}(0))]=\frac{(1-\rho)\theta_1}{\theta_1+\frac{\lambda_0}{\mu}(1-\psi_{S_1}(\theta_1))}<\infty,$$
as long as $\rho\equiv\frac{\lambda_0}{(1-m)\mu}<1$ and $\theta_1+\frac{\lambda_0}{\mu}(1-\psi_{S_1}(\theta_1))>0$, which is guaranteed by the definition of $\theta_1$ in \eqref{eq: theta1 def}.
As a consequence, we can conclude 
$\EE[\exp(\theta_1 \tilde{W}(0))]<\infty.$

\begin{remark}
In fact, our procedure shows that under Assumption \ref{assmpt: stable} to \ref{assume: light-tail of V}, any Hawkes queue with service rate $\mu\geq\underline{\mu}-\varepsilon$ has finite m.g.f in the domain $(0,\theta_1]$. Thus, since the $\varepsilon$ is arbitrary, we have that all moments for the stationary workload for the Hawkes queue are finite as long as the Hawkes queue is stable and the job size $V$ has a finite moment generating function around 0.
\end{remark}

\subsection{Moments of Stationary Observed Busy Period}\label{subsec: busy period moments}

It is known that in a stable GI/GI/1 queue, the observed busy period (in terms of how many customers served) has all finite moments under the stationary distribution \citep{nakayama2004finite} if the service times have finite moment generating function around 0. 
In this section, we show that a similar result holds also for the Hawkes$/GI/1$ queue but in terms of real-time. In particular, $\tilde{X}(0)$ has finite moments under certain conditions (observed busy period part of Theorem \ref{thm: moment of W}). However, we could not follow the analysis in \cite{nakayama2004finite} due to the crucial difference between Hawkes models and the $GI/GI/1$ model.


In the proof of the moment bounds of stationary waiting time, we introduce an auxiliary $M/GI/1$ queue, and the upper bound for the $M/GI/1$ queue could give a bound for the workload of Hawkes queues. However, unfortunately, the dominant relationship of workload cannot help the observed busy period due to the existence of $J^\leftarrow_0(t)$. Our solution is to bound $\tilde{X}(0)$ by the last passage time of $\hat{R}^\leftarrow(0,u)$ and connect this last passage time with a stationary workload of another Hawkes queue but with a smaller service rate.

By definition, $\tilde{X}(0)=\argmax_{u\geq 0}R^\leftarrow(0,u)$. Since $\max_{u\geq 0} R^\leftarrow(0,u)\geq 0$, then $\tilde{X}(0)$ can be bounded by the following last passage time of $R^{\leftarrow}(0,u)$, 
$$\tilde{X}(0)\leq \tilde{\tau}\equiv \sup\{u\geq 0:R^\leftarrow(0,u)\geq 0\}.$$

Although the dominance of workload does not imply the dominance of busy time as mentioned ahead, we could find a pathwise connection between $\tilde{\tau}$ and the workload process of another Hawkes queue as follows. Recall that 
$$\tilde{W}(0)=\sup_{u\geq 0}R^\leftarrow (0,u)=\sup_{u\geq 0}\left\{ \sum_{i=-1}^{-N(-u)} V_i -\mu u \right\}$$
is the stationary workload of a Hawkes queue under service rate $\mu$. Similarly, we have
$$\tilde{W}_\varepsilon(0)\equiv\sup_{u\geq 0}\left\{R^\leftarrow (0,u)+\varepsilon u\right\}=\sup_{u\geq 0}\left\{\sum_{i=-1}^{-N(-u)} V_i -(\mu-\varepsilon) u \right\}$$
is the stationary workload of another Hawkes queue with service rate $\mu-\varepsilon$. By Assumption \ref{assmpt: stable}, $\tilde{W}_\varepsilon(0)$ is also well-defined. The following lemma connects $\tilde{\tau}$ with $\tilde{W}_\varepsilon(0)$.
\begin{lemma} \label{lmm: X and W connection}
Suppose Assumption \ref{assmpt: stable} holds, then we have 
$$\tilde{\tau}\leq \tilde{W}_\varepsilon(0)/\varepsilon$$
\end{lemma}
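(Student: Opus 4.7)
The plan is to prove this inequality directly from the definitions, as the connection between $\tilde\tau$ and $\tilde W_\varepsilon(0)$ is essentially tautological once we add $\varepsilon u$ to both sides. The key observation is that adding the perturbation $\varepsilon u$ inside the supremum defining $\tilde W_\varepsilon(0)$ is designed precisely to convert a ``level-crossing at height $0$'' statement about $R^\leftarrow(0,\cdot)$ into a height bound. So the argument should be a short one-line comparison after some setup.

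First I would justify that $\tilde\tau<\infty$ almost surely under Assumption \ref{assmpt: stable}. By the law of large numbers for the counting process $N$ and the i.i.d.\ job sizes, $R^\leftarrow(0,u)/u\to \lambda_0\EE[V]/(1-m)-\mu<-\varepsilon$ as $u\to\infty$, so $R^\leftarrow(0,u)\to-\infty$ and the set $\{u\geq 0:R^\leftarrow(0,u)\geq 0\}$ is almost surely bounded, making $\tilde\tau$ finite. Note also that $0\in\{u\geq 0:R^\leftarrow(0,u)\geq 0\}$, so the set is nonempty.

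Next, for any $u\geq 0$ satisfying $R^\leftarrow(0,u)\geq 0$, we have
$$\tilde W_\varepsilon(0)=\sup_{s\geq 0}\bigl\{R^\leftarrow(0,s)+\varepsilon s\bigr\}\geq R^\leftarrow(0,u)+\varepsilon u\geq \varepsilon u.$$
Taking the supremum of the right-hand side over all such $u$ gives $\tilde W_\varepsilon(0)\geq \varepsilon\,\tilde\tau$, and dividing by $\varepsilon>0$ yields the claim.

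There is no real obstacle here; the substantive content of the lemma lies in its usefulness downstream, namely that finite moments and light-tail bounds for $\tilde W_\varepsilon(0)$ (obtained in Section \ref{subsec: workload moments} by applying Theorem \ref{thm: moment of W}(a)(b) to the Hawkes queue with the slightly reduced service rate $\mu-\varepsilon$, which is still stable by Assumption \ref{assmpt: stable}) will transfer immediately to $\tilde\tau$ and thereby to $\tilde X(0)\leq\tilde\tau$. The only minor point to watch is confirming that the perturbed system has the same stability margin: since $\lambda_0\EE[V]/(1-m)<\underline\mu-\varepsilon\leq \mu-\varepsilon$, the sup defining $\tilde W_\varepsilon(0)$ is indeed almost surely finite, so the inequality is meaningful.
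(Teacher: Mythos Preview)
Your proof is correct and follows essentially the same one-line comparison as the paper: the paper writes $\varepsilon\tilde{\tau}=R^\leftarrow(0,\tilde{\tau})+\varepsilon\tilde{\tau}\leq \sup_{u\geq 0}\{R^\leftarrow(0,u)+\varepsilon u\}=\tilde W_\varepsilon(0)$, using $R^\leftarrow(0,\tilde{\tau})=0$. Your version is marginally more careful in that it avoids asserting $R^\leftarrow(0,\tilde{\tau})=0$ exactly and instead takes the supremum over the level set, but the underlying idea is identical.
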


\begin{proof}{Proof of Lemma \ref{lmm: X and W connection}}
By definition, $R^\leftarrow(0,\tilde{\tau})=0$. Using this relation, we have 
\begin{align*}
	\varepsilon \tilde{\tau}&=R^\leftarrow(0,\tilde{\tau})+\varepsilon \tilde{\tau } \leq  \sup_{u\geq 0} \{R^\leftarrow(0,u)+\varepsilon u\}=\tilde{W}_\varepsilon (0).
\end{align*}
Consequently, $\tilde{\tau}\leq \tilde{W}_\varepsilon(0)/\varepsilon$.
\end{proof}

Since $\tilde{W}_\varepsilon(0)$ is the stationary workload of Hawkes queue with service rate $\mu-\varepsilon$, we have proved that it has the corresponding moment bounds in Section \ref{subsec: workload moments}. Then, we are ready to finish the proof of second part of Theorem \ref{thm: moment of W}.
For statement (a), it's clear that 
$$
\EE[\tilde{X}^k(0)]\leq \EE[\tilde{\tau}^k]\leq \EE[\tilde{W}_\varepsilon(0)^k]/\varepsilon^k.
$$
By the first part of Theorem \ref{thm: moment of W}, $\EE[\tilde{W}_\varepsilon(0)^k]<\infty$ as long as $\EE[V_1^{k+1}]<\infty$. This finishes the proof of (a).

For statement (b), following the same analysis,
$$\EE[\exp(\varepsilon\theta_1 \tilde{X}^k(0))]\leq \EE[\exp(\varepsilon \theta_1 \tilde{\tau})]\leq \EE[\exp(\theta_1\tilde{W}_\varepsilon(0))]<\infty$$
where the last inequality follows the proof in Section \ref{subsec: workload moments}.

\section{Proof of Theorem \ref{thm: geometric ergodicity} and Corollary \ref{coro: convergence to steady state}}\label{sec: ergodicity}
We first prove Theorem \ref{thm: geometric ergodicity} in Section \ref{subsec: proof coupling} and then prove Corollary \ref{coro: convergence to steady state} in Section \ref{subsec: proof convergence to steady-state}.
\subsection{Proof of Theorem \ref{thm: geometric ergodicity}}\label{subsec: proof coupling}

We first show that the difference between two semi-synchronously coupled workload processes is always bounded by the initial difference.
\begin{lemma}\label{lmm: sychronous coupling1} Suppose that two Hawkes/GI/1 queues are semi-synchronously coupled  with initial states $(W^1(0),N_0^1(0))$ and $(W^2(0),N_0^2(0))$ respectively. Let $J_0^i(0)$  be the total residual job sizes in $N_0^i(0)$ for $i=1,2$. Then 
$$
\vert W^1(t)-W^2(t)\vert \leq\vert W^1(0)-W^2(0)\vert +\max(J_0^1(0), J_0^2(0))
$$

\end{lemma}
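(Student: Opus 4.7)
The plan is to write each $W^i(t)$ via the reflection formula \eqref{eq: workload dynamics} and to split the net input of each system into a common piece and a memory-driven piece. Under the semi-synchronous coupling, the clusters arriving after time $0$ (immigrants, descendants, and job sizes) are identical for the two systems, while the memories $N_0^1(0)$ and $N_0^2(0)$ are allowed to differ. I will write
$$R^i(t)=\tilde{R}(t)+M^i(t),$$
where $\tilde{R}(t)$ collects the workload brought in $(0,t]$ by the shared post-$0$ clusters minus $\mu t$, and
$$M^i(t)=\sum_{(k,t_l^k,p_l^k,V_l^k)\in N_0^i(0),\ 0\le t_l^k<t}V_l^k$$
accumulates the work injected in $[0,t)$ by the residual events of system $i$'s Hawkes memory. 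The salient properties are that $M^i$ is nondecreasing, $M^i(0)=0$, and $M^i(t)\le J_0^i(0)$ for every $t$; the pre-$0$ events in the memory clusters have already been absorbed into $W^i(0)$ and must not be counted again.

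Next, from \eqref{eq: workload dynamics} the workload admits the equivalent representation
$$W^i(t)=\max\!\Bigl(W^i(0)+R^i(t),\ \sup_{0\le s\le t}\bigl(R^i(t)-R^i(s)\bigr)\Bigr).$$
Applying the elementary inequality $|\max(a,b)-\max(c,d)|\le\max(|a-c|,|b-d|)$ reduces the task to bounding two quantities. The first is $|W^1(0)-W^2(0)+M^1(t)-M^2(t)|$ (the common drift $\tilde{R}(t)$ cancels in the difference), which by the triangle inequality together with $0\le M^i(t)\le J_0^i(0)$ is at most $|W^1(0)-W^2(0)|+\max(J_0^1(0),J_0^2(0))$. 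The second is $\bigl|\sup_s(R^1(t)-R^1(s))-\sup_s(R^2(t)-R^2(s))\bigr|$, which is at most $\max(J_0^1(0),J_0^2(0))$ via the pathwise sandwich $\sup_s(\tilde{R}(t)-\tilde{R}(s))\le\sup_s(R^i(t)-R^i(s))\le\sup_s(\tilde{R}(t)-\tilde{R}(s))+J_0^i(0)$. Taking the larger of the two bounds and noting that the first dominates yields precisely the claimed inequality.

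The main hurdle is conceptual rather than technical: the decomposition $R^i=\tilde{R}+M^i$ must faithfully reflect the definition of semi-synchronous coupling. One has to verify that $\tilde{R}$ is pathwise identical across the two systems (this is exactly what the coupling enforces on post-$0$ clusters) and that $M^i$ is uniformly dominated by $J_0^i(0)$ (this is the defining property of the residual job size in \eqref{eq: J0}), without double-counting the pre-$0$ memory events. Once this bookkeeping is set up correctly, the remainder is just the standard Lipschitz property of the one-sided Skorokhod reflection map applied to two net-input paths that differ by a nondecreasing, uniformly bounded perturbation.
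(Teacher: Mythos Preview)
Your argument is correct and takes a genuinely different route from the paper. The paper proceeds dynamically: it tracks $(W^1(t)-W^2(t))^+$ over time, partitions $[0,t]$ at the arrival times from $N_0^1(0)$, and argues that on each sub-interval the positive part is non-increasing (shared arrivals leave the difference unchanged; arrivals from $N_0^2(0)$ can only decrease it), while at each arrival from $N_0^1(0)$ it jumps by at most the corresponding job size. Summing the jumps gives the $J_0^1(0)$ contribution; the symmetric argument and a final $\max$ produce the bound. You instead exploit the explicit Skorokhod representation together with the elementary inequality $|\max(a,b)-\max(c,d)|\le\max(|a-c|,|b-d|)$, reducing everything to the observation that the two net-input paths differ only by a nondecreasing perturbation bounded by $J_0^i(0)$.

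Both approaches are short. The paper's sample-path argument is perhaps more intuitive and avoids any manipulation of suprema, but it relies on an informal case analysis of arrival types. Your argument is cleaner algebraically and makes transparent that the result is simply the Lipschitz property of the one-dimensional reflection map under a bounded monotone perturbation; this formulation would extend immediately to, say, continuous or non-pure-jump perturbations of the net input, which the paper's jump-by-jump bookkeeping would not handle without modification.
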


Next, we show that two coupled workload processes $W^1(t)$ and $W^2(t)$ will eventually converge, and we can then give out the ergodic rate based on the ``mixing time". For $i=1,2$, recall that $N^i_0(0)$ is the ``past history" of Hawkes processes and the $L_0^i$ is the last arrival time in $N^i_0(0)$, i.e.,
$$L_0^i = \max\{t_l^k: (k,t_l^k,p_l^k,V_l^k)\in N_0^i(0)\}.$$
Clearly, $L_0^i$ is part of the Hawkes memory $N^i_0(0)$. Define two hitting times
$$\tau_i \triangleq \min\{t: W^i(t) = 0, t\geq L_0^i\}.$$
The following lemma shows that the coupled workload processes $W^1(t)$ and $W^2(t)$ will always equal after these hitting times.
\begin{lemma} Suppose that the conditions in Lemma \ref{lmm: sychronous coupling1} hold. Then, for all $t\geq \max(\tau_1,\tau_2)$, $W^1(t)=W^2(t)$ \ and $X^1(t)=X^2(t)$.
\label{lmm: sychronous coupling2}
\end{lemma}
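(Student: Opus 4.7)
The plan is to prove the lemma by a simple monotonicity argument for one-dimensional Skorokhod reflection, leveraging the key structural consequence of semi-synchronous coupling: for any $t\geq \tau_i$, system $i$ has no further arrivals from the past-history set $N_0^i(0)$ (because $\tau_i\geq L_0^i$), so from $\tau_i$ onward its arrivals are drawn entirely from the shared post-$0$ clusters. Without loss of generality assume $\tau_1\leq \tau_2$.

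I first show $W^1(\tau_2)=0$. On the interval $[\tau_1,\tau_2]$, system 1 receives only shared post-$0$ arrivals, whereas system 2 receives the same shared arrivals plus any remaining $N_0^2(0)$ arrivals occurring in $(\tau_1, L_0^2]$ (this extra set is empty if $L_0^2\leq \tau_1$); both systems serve at common rate $\mu$. Hence the net input process into system 2 dominates that into system 1 increment by increment, and the initial values at $\tau_1$ are ordered $0=W^1(\tau_1)\leq W^2(\tau_1)$. Monotonicity of the one-dimensional Skorokhod reflection then gives $W^1(t)\leq W^2(t)$ throughout $[\tau_1,\tau_2]$. Evaluating at $t=\tau_2$ and using $W^2(\tau_2)=0$ forces $W^1(\tau_2)=0$.

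Next, for all $t\geq \tau_2$ the two workloads start from the common value $0$ at $\tau_2$ and are driven by the identical shared post-$0$ arrival stream with common service rate $\mu$ (note $\tau_2\geq L_0^1\vee L_0^2$); the workload recursion is deterministic in these inputs, so $W^1(t)=W^2(t)$ for all $t\geq \tau_2$. The observed busy period statement follows because $\tau_2$ is a common zero of $W^1$ and $W^2$ and the two paths coincide on $[\tau_2,\infty)$: for $t\geq \tau_2$ the zero sets $\{s\leq t:W^i(s)=0\}$ have the same supremum (both $\geq \tau_2$ and identical on $[\tau_2,t]$), so $X^1(t)=X^2(t)$ by definition.

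The main obstacle is to justify the monotonicity step rigorously, since it is the only non-trivial ingredient. This should be invoked via the explicit Skorokhod formula $W^i(t) = \max(W^i(\tau_1) + Y^i(t), \sup_{\tau_1\leq s\leq t}(Y^i(t) - Y^i(s)))$, where $Y^i(t)$ denotes the cumulative net input into system $i$ on $[\tau_1,t]$; the formula makes monotonicity in both the initial value and the increments of $Y^i$ transparent. The local-time formulation of the Skorokhod problem automatically handles the reflection at zero, so no separate case analysis is required when either queue is empty.
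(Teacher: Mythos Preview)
Your argument is correct and rests on the same core idea as the paper's proof: monotonicity of the one-dimensional Skorokhod reflection forces both workloads to vanish at $\max(\tau_1,\tau_2)$, after which the two systems share identical inputs and therefore coincide. The only difference is the anchor point of the comparison. The paper starts at $\max(L_0^1,L_0^2)$, where the inputs to the two systems become \emph{identical}, orders the two workloads there (WLOG $W^1\geq W^2$), and deduces the ordering persists; you instead start at $\tau_1$ (assuming $\tau_1\leq \tau_2$), where $W^1(\tau_1)=0\leq W^2(\tau_1)$, and use that on $[\tau_1,\tau_2]$ system~2's net input \emph{dominates} system~1's (shared arrivals plus possible residual $N_0^2$-jobs). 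Both routes invoke the same monotonicity property; yours is slightly more explicit about the Skorokhod formula and avoids the small bookkeeping issue in the paper's proof of checking that $\tau_1\geq \max(L_0^1,L_0^2)$ when concluding $\tau_1\geq\tau_2$.
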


Given Lemmas \ref{lmm: sychronous coupling1} and \ref{lmm: sychronous coupling2}, we have, for any non-negative and non-decreasing function $f(\cdot)$ on $\mathbb{R}^+$ with $f(0)=0$,
$$f\left(\vert W^1(t)-W^2(t)\vert\right)\leq  f\left(D_0\right)\ind{\max(\tau_1,\tau_2)>t}.$$ 
As a consequence, the analysis of the convergence rate for the Hawkes queues can be reduced to analyzing the tail probability of the hitting times $\tau_1$ and $\tau_2$. For this purpose, we have

Let 
\begin{equation}
(\theta,\eta)=\frac{\min(\theta_0/2,\theta_1)}{6\max\{\bar{\mu},1\}(\eta_1+2\theta_1)}\cdot(\theta_1,\eta_1),
\label{eq: theta and eta}
\end{equation}
with 

\begin{equation}\label{eq: eta1}
\eta_1=\frac{1}{2}\left(\theta_1-\frac{\lambda_0}{\underline{\mu}{-\varepsilon}}(\psi_{S_1}(\theta_1)-1)\right)>0.
\end{equation}

\begin{proposition}\label{prop: mixing time}
Suppose that two Hawkes/GI/1 queues are semi-synchronously coupled  with initial states $(W^1(0),X^1(0),N_0^1(0))$ and $(W^2(0),X^2(0),N_0^2(0))$ respectively. Let $\mathcal{F}_0=\sigma\left(W^i(0),X^i(0),N_0^i(0),i=1,2\right)$, then 
$$\PP(\tau_i>t\vert\mathcal{F}_0)\leq \min\{e^{-\eta \mu t}e^{(\eta+2\theta)\mu L_0^i}e^{\theta(W^i(0)+J_0^i(0))},1\},$$
with $\eta$ and $\theta$ defined in \eqref{eq: theta and eta}.
\end{proposition}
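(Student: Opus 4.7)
The plan is to combine a deterministic sample-path inequality with an exponential Markov bound driven by the compound Poisson total workload of ``new'' clusters (those whose immigrant arrives after time $0$). The $\min(\cdot,1)$ absorbs the trivial regime (either $t<L_0^i$, in which case $\tau_i\geq L_0^i>t$ and the left-hand side equals $1$, or the exponential already exceeds $1$), so I focus on the exponential tail when $t\geq L_0^i$.

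\textbf{Pathwise step.} On $\{\tau_i>t\}$ with $t\geq L_0^i$ the workload $W^i(\cdot)$ must be strictly positive on all of $[L_0^i,t]$, otherwise $\tau_i\leq t$. Hence the server is continuously busy on this interval and
\begin{equation*}
W^i(t)=W^i(L_0^i)+(\text{arrivals in }(L_0^i,t])-\mu(t-L_0^i)>0.
\end{equation*}
On $[0,L_0^i]$ the crude ``arrivals $\geq$ net change in workload'' inequality yields $W^i(L_0^i)\leq W^i(0)+J_0^i(0)+(\text{new cluster arrivals in }(0,L_0^i])$, using that every event of $N_0^i(0)$ has arrived by time $L_0^i$. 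Since $N_0^i(0)$ is exhausted by $L_0^i$, arrivals in $(L_0^i,t]$ also come only from new clusters. Writing $A_t$ for the total new cluster work that has arrived by time $t$ and combining the two displays,
\begin{equation*}
A_t>\mu(t-L_0^i)-W^i(0)-J_0^i(0).
\end{equation*}
Since each cluster's full workload $S_l$ dominates the portion of it that has arrived by $t$, $A_t\leq\sum_{l:\,t_l^1\in(0,t]} S_l$, which is a compound Poisson sum.

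\textbf{Probabilistic step.} The cluster representation of the stationary Hawkes process (Section \ref{subsec: stationary Hawkes}) gives that the post-$0$ immigrant times $\{t_l^1:t_l^1>0\}$ form a rate-$\lambda_0$ Poisson process, that the cluster totals $\{S_l\}$ are i.i.d.\ with $\psi_{S_1}(\theta_1)<\infty$ by Lemma \ref{lmm: light-tail workload assumption}, and that both are independent of $\mathcal{F}_0$, which is measurable only with respect to pre-$0$ immigrants, their clusters, and the initial workload/busy-time variables. Conditionally on $\mathcal{F}_0$, $\sum_{l:\,t_l^1\in(0,t]} S_l$ is therefore compound Poisson with MGF $\exp(\lambda_0 t(\psi_{S_1}(\theta)-1))$. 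Applying Markov's inequality to the event from the previous step at parameter $\theta$ from \eqref{eq: theta and eta},
\begin{equation*}
\PP(\tau_i>t\mid\mathcal{F}_0)\leq \exp\bigl(-[\theta\mu-\lambda_0(\psi_{S_1}(\theta)-1)]t+\theta\mu L_0^i+\theta(W^i(0)+J_0^i(0))\bigr).
\end{equation*}
The prefactor in \eqref{eq: theta and eta} is chosen so that $\theta\leq\theta_1$ (finite MGF) and, using \eqref{eq: eta1}, $\theta\mu-\lambda_0(\psi_{S_1}(\theta)-1)\geq\eta\mu$ uniformly over $\mu\in[\underline{\mu},\bar\mu]$. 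The coefficient of $L_0^i$ I obtain is the sharper $\theta\mu$, which is dominated by $(\eta+2\theta)\mu$ since $\eta,\theta>0$; this yields exactly the stated inequality.

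\textbf{Main obstacle.} The deterministic step is routine; the delicate part is the probabilistic one, specifically cleanly justifying the independence of $\sum_{l:\,t_l^1\in(0,t]} S_l$ from $\mathcal{F}_0$. This relies on the cluster construction combined with the independent-increments property of the two-ended Poisson immigrant process across $(-\infty,0]$ versus $(0,\infty)$, plus mutual independence of cluster progeny across immigrants. A secondary subtlety is that new clusters born in $(0,L_0^i]$ can have descendant events arriving after $L_0^i$, so the processes ``arrivals in $[0,L_0^i]$'' and ``arrivals in $(L_0^i,t]$'' are not independent of each other; the uniform dominating bound $A_t\leq\sum_{l:\,t_l^1\in(0,t]}S_l$ sidesteps this entirely by merging both windows into a single compound Poisson quantity whose independence from $\mathcal{F}_0$ is transparent.
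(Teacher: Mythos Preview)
Your proof is correct and actually more direct than the paper's. The paper proceeds via a two-stage super-martingale argument: it introduces an auxiliary hitting time $\hat{\tau}_i$ of the dominating $M/GI/1$ process started at time $L_0^i$, constructs the super-martingale $M_i(t)=\exp\bigl(\theta(W^i(L_0^i)+J_0(L_0^i))+\theta\hat R(L_0^i,t)+\eta\mu t\bigr)$, and applies optional stopping plus Fatou to obtain $\EE[e^{\eta\mu\hat\tau_i}\mid\mathcal F_{L_0^i}]\le e^{\theta(W^i(L_0^i)+J_0(L_0^i))}$. It then separately bounds $W^i(L_0^i)$ and $J_0(L_0^i)$ each by $\hat R(0,L_0^i)+\mu L_0^i$ plus initial data, and invokes a second super-martingale bound $\EE[e^{2\theta\hat R(0,L_0^i)}\mid\mathcal F_0]\le 1$ to absorb the random contribution over $[0,L_0^i]$. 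This split across $L_0^i$ is precisely what produces the looser coefficient $(\eta+2\theta)\mu$ on $L_0^i$.

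By contrast, you merge both windows $[0,L_0^i]$ and $(L_0^i,t]$ into the single compound Poisson bound $A_t\le\sum_{t_l^1\in(0,t]}S_l$ and apply one Markov inequality, yielding the sharper coefficient $\theta\mu$ on $L_0^i$. The paper's route buys an explicit exponential moment bound on the coupling time itself (not just its tail), which is a marginally stronger intermediate result, but for the proposition as stated your elementary argument suffices. One point you leave implicit is the inequality $\theta\mu-\lambda_0(\psi_{S_1}(\theta)-1)\ge\eta\mu$: since $\eta_1$ in \eqref{eq: eta1} is defined through $\psi_{S_1}(\theta_1)$ rather than $\psi_{S_1}(\theta)$, you need the convexity bound $\psi_{S_1}(\theta)-1\le(\theta/\theta_1)(\psi_{S_1}(\theta_1)-1)$ (equivalently, the Jensen step the paper uses inside its super-martingale computation) together with $\mu\ge\underline\mu>\underline\mu-\varepsilon$ to close the loop; this is routine but worth stating.
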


Our idea is to construct two dominant $M/GI/1$ queues starting from time $L_0^i$ and estimate the hitting times of $M/GI/1$ queues by a super-martingale.
Specifically, define the forward process  
$$\hat{R}(t,s)=\sum_{t\leq t_l^1<t+s}S_l-\mu s.$$
By the same argument of Proposition \ref{prop: mG1}, the forward process also has the dominance relationship by 
$$R(t,s)\leq J_0(t)+\hat{R}(t,s).$$ 
Therefore, when the $M/GI/1$ queue starting from the time $L_0^i$ clears, the Hawkes/GI/1 queue must have already cleared, i.e.,
$$\tau_i-L_0^i\leq \hat{\tau}_i\equiv \min\{t\geq 0:W^i(L_0^i)+J_0(L_0^i)+\hat{R}(L_0^i,t)=0\}.$$
To estimate $\hat{\tau}_i$, we define the following super-martingale 
$$M_i(t)=\exp(\theta (W^i(L_0^i)+J_0(L_0^i))+\theta\hat{R}(L_0^i,t)+\eta \mu t).$$
With the help of $M_i(t)$, we could give an exponential moment bound of $\hat{\tau}_i$, which gives the hitting time distribution via Markov inequality. The full proof of Proposition \ref{prop: mixing time} is in Section \ref{sec: proof ergo}. 

With Proposition \ref{prop: mixing time}, we are ready to complete the proof of	Theorem \ref{thm: geometric ergodicity}.
Recall that 
$$f\left(\vert W^1(t)-W^2(t)\vert\right)\leq  f\left(D_0\right)\ind{\max(\tau_1,\tau_2)>t},$$
so
$$\EE\left[f\left(\vert W^1(t)-W^2(t)\vert\right)\Big\vert\mathcal{F}_0\right]\leq  f\left(D_0\right)\PP(\max(\tau_1,\tau_2)>t\vert \mathcal{F}_0).$$
By Proposition \ref{prop: mixing time}, we can summarize that
\begin{align*}
&\EE\left[f\left(\vert W^1(t)-W^2(t)\vert\right)\Big\vert\mathcal{F}_0\right]\\
\leq&  f\left(D_0\right)\PP(\max(\tau_1,\tau_2)>t\vert \mathcal{F}_0)\leq f\left(D_0\right) \sum_{i=1}^2 \PP(\tau_i>t\vert \mathcal{F}_0)\\
\leq &f(D_0)\sum_{i=1}^{2}\min\{\exp\left((\eta+2\theta)\mu L_0^i+\theta(W^i(0)+J^i_0(0)-\eta \mu t)\right),1\}.
\end{align*}
Similarly, for the busy period, we notice that 
$$\vert X^1(t)-X^2(t)\vert \leq \left(X^1(0)+X^2(0)+t\right)\ind{\tau_1\vee\tau_2\geq t}.$$
Therefore, by Proposition \ref{prop: mixing time},
\begin{align*}
\EE\left[\vert X^1(t)-X^2(t)\Big\vert\mathcal{F}_0 \right]&\leq  \left(X^1(0)+X^2(0)+t\right)^m \PP(\tau_1\vee\tau_2\geq t\vert\mathcal{F}_0)\\
&\leq e^{-\eta\mu  t}\left(X^1(0)+X^2(0)+t\right)^m\sum_{i=1}^2\min\{e^{(\eta+2\theta)\mu L_0^i+\theta(W^i(0)+J_0^i(0))-\eta\mu t},1\}.
\end{align*} 



\subsection{Proof of Corollary \ref{coro: convergence to steady state}}\label{subsec: proof convergence to steady-state} 
To show the exponential ergodicity, we rely on our semi-synchronous coupling construction and Proposition \ref{prop: mixing time}. Let $W(\cdot),X(\cdot),N_0(\cdot)$ be the (transient) Hawkes$/GI/1$ with initial state $(W(0),X(0),N_0(0))$ and $(\tilde{W}(\cdot),\tilde{X}(\cdot),\tilde{N}_0(\cdot))$ be a stationary Hawkes$/GI/1$ queue semi-synchronously coupled with $(W(\cdot),X(\cdot),N_0(\cdot))$. Let $\mathcal{B}$ be the $\sigma$-field of $(W(t),X(t),N_0(t))$, then by definition,
\begin{align*}
&\|\PP^t(\cdot|W(0),X(0),N_0(0))-\pi\|_{TV}\\
=&\sup_{A\in\mathcal{B}} |\PP((W(t),X(t),N_0(t))\in A|W(0),X(0),N_0(0))-\PP((\tilde{W}(t),\tilde{X}(t),\tilde{N}_0(t))\in A)|\\
\leq & \PP(W(t)\neq \tilde{W}(t),X(t)\neq\tilde{X}(t),N_0(t)\neq \tilde{N}(t)|W(0),X(0),N_0(0))
\end{align*}
Note that by our construction, $(W(t),X(t),N_0(t))$ coincide with $(\tilde{W}(t),\tilde{X}(t),\tilde{N}(t))$ as long as $t>\max(\tau_1,\tau_2)$ (Lemma \ref{lmm: sychronous coupling2}). Consequently, we have 
\begin{align*}
&\|\PP^t(\cdot|W(0),X(0),N_0(0))-\pi\|_{TV}\\
\leq &\EE[\PP(\max(\tau_1,\tau_2)>t|\mathcal{F}_0)| W(0),X(0),N_0(0)]\\
\leq &\min\{e^{-\eta\mu t}(M_0+\exp((\eta+2\theta)\mu L_0+\theta (W(0)+J_0(0)))),1\}
\end{align*}
where $M_0=\EE[\exp((\eta+2\theta)\mu \tilde{L}_0+\theta(\tilde{W}(0)+\tilde{J}_0(0)))]<\infty$. The last inequality is by Proposition \ref{prop: mixing time}.
\section{Application Example:  Online Learning for Hawkes/GI/1 Queue}\label{sec: online learning}


In this section, we introduce an application of the theory developed in Section \ref{sec: main results}. Specifically, we introduce an online learning problem for Hawkes$/GI/1$ queues and design the first online learning algorithm for the staffing problem of Hawkes$/GI/1$ queues. In addition, we analyze the efficiency of this algorithm via the regret analysis and the convergence rate of the algorithm, the heart of which is the moment bounds and the ergodicity results in Section \ref{sec: main results}.
\subsection{Problem Setting and Assumptions}\label{subsec: learning setting}
We consider a Hawkes/GI/1 queue in which the customers arrive following a stationary Hawkes process with \textbf{unknown} parameter $(\lambda_0,m, f(\cdot))$ and the i.i.d. job size follows a positive random variable $V$ with mean $1$ with service rate $\mu$ as in Section \ref{sec: preliminaries}.  In the staffing problem, the service provider's goal is to find the best choice of $\mu\in\mathcal{B}= [\underline{\mu},\bar{\mu}]$ to balance between the staffing cost (per unit time) and the penalty of congestion: 
\begin{equation}\label{eq:objective}
\min_{\mu\in\mathcal{B}} f(\mu) \equiv h_0\mathbb{E}[W_\infty(\mu)]+c(\mu),
\end{equation} 
where $W_\infty(\mu)$ is the steady-state workload process under the staffing level $\mu$ and $c(\mu)$ is the corresponding staffing cost. For the ease of notation, let's denote $w(\mu)\triangleq \EE[W_\infty(\mu)]$. We shall additionally impose the following assumption for the rest of this article to guarantee the convexity of the objective, which is a common practice in online learning literature \citep{hazan2016introduction}. 
\begin{assumption}[Service Cost] We assume the staffing cost function $c(\mu)$ is convex, continuously differentiable and non-decreasing in $\mu$.
\label{assum: convex of cmu}
\end{assumption}
The form of the staffing problem \eqref{eq:objective} has a long history in traditional queueing systems (see, for example, \citep{Maglaras2003,LeeWard2014,Nair2016,onlinequeue2020} and references therein). However, in problem \eqref{eq:objective}, the workload $\EE[W_\infty(\mu)]$ has no closed form even for $GI/GI/1$ queue, let alone more complicated Hawkes$/GI/1$ queue, so the problem \eqref{eq:objective} is intractable analytically. Therefore, we design the first online learning algorithm for the Hawkes staffing problem \eqref{eq:objective} without knowledge of the information of Hawkes arrivals (to the best of our knowledge). 

\subsection{Online SGD-Based Algorithm and the System Dynamics}\label{subsec: online algorithm}

In this section, we introduce our algorithm designed for the Hawkes$/GI/1$ queue. Our algorithm is designed based on the framework of \textit{Gradient-based Online Learning in Queues} (GOLiQ) algorithm in our recent paper \cite{onlinequeue2020}, which is an algorithm specially designed for $GI/GI/1$ queue for a similar problem. Although we could not directly apply GOLiQ to the Hawkes system due to the difference between Hawkes queues and $GI/GI/1$ queue, our moment bound and ergodicity results in Section \ref{sec: main results} provide us the key ingredients to design and efficiency analysis of the new algorithm (Algorithm \ref{alg: direct}

\begin{algorithm}[H]
\caption{Online Algorithm for Hawkes$/GI/1$ Queues (GOLiQ-Hawkes)}\label{alg: direct}
\begin{algorithmic}[1]
	\Require {number of cycles $L$, 
		parameters $T_k$, $\eta_k$ for $k=1,2,...,L$,
		initial value $\mu_1$, warm-up rate $\xi$}
	\For{$n=1,2,...,L$}
	\State operate the system under $\mu_k$ until time $T_k$
	\State observe $X_k(t)$ for $t\in[0,T_k]$
	\State estimate gradient by 
	$$H_k=-\frac{h_0}{(1-\xi)T_k}\int_{\xi T_k}^{T_k}X_k(t)dt+c'(\mu)$$
	\State \textbf{update: } $\mu_{k+1}=\min\left(\max(\mu_k-\eta_kH_k,\underline{\mu}),\bar{\mu}\right)$
	\EndFor 
\end{algorithmic}
\end{algorithm}

Our algorithm falls into the category of gradient descent method. In our algorithm, we organize the time into successive cycles according to the hyperparameter $T_k$, the cycle length. To mitigate the bias introduced by the transient state of the queue, the algorithm sends $T_k$ to $\infty$ as $k\rightarrow\infty$. In each cycle, the algorithm operates the system under $\mu_k$ and builds a gradient estimator $H_k$ with data collected within the cycle. At the end of the cycle, the system updates $\mu_{k+1}$ by an SGD rule. Due to the cyclic design, we use the under-script $_k$ to denote the corresponding stochastic process in cycle $k$, e.g., $N_k(t),W_k(t),X_k(t),J_{0,k}(t)\cdots$ being the corresponding arrival, workload, busy time, total residual job sizes, etc., in cycle $k$. 

The heart of Algorithm \ref{alg: direct} is its gradient estimator. In our case, the challenge is to estimate $w'(\mu)$, which has no closed form, from data without involving any unknown Hawkes information, i.e., the parameters $(\lambda_0,m,f)$. For this purpose, we apply a pathwise analysis to the Hawkes system and derive a convenient expression of the derivative of $w(\mu)$ by the following representation theorem.
\begin{proposition}
Suppose Assumption \ref{assmpt: stable} holds. Then, for a $Hawkes/GI/1$ queue with stationary input, the derivative of the mean steady-state workload $w(\mu)$ satisfies 
$$w'(\mu)=-\mathbb{E}[X_\infty(\mu)],$$
where $X_\infty$ is the steady-state of the observed busy period (in continuous time) for $\mu\in\mathcal{B}$. Moreover, $w(\mu)$ is Lipchitz continuous for $\mu\in\mathcal{B}$, i.e.,
$$\vert w(\mu_1)-w(\mu_2)\vert \leq \EE\left[X_\infty(\underline{\mu})\right]\vert \mu_1-\mu_2\vert.$$
\label{prop: gradient design}
\end{proposition}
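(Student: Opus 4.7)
The plan is to carry out a pathwise analysis of the stationary workload representation from Proposition 1, apply the envelope/Danskin theorem to obtain the derivative sample-path by sample-path, and then pass to expectations using the moment bounds in Theorem 1.

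Concretely, I would first fix a realization of the stationary Hawkes arrival process and i.i.d.\ job sizes, which do \emph{not} depend on $\mu$, and write
$$\tilde{W}(\mu) \;=\; \max_{s\geq 0}\bigl\{A(s)-\mu s\bigr\}, \qquad A(s)\equiv\sum_{n=-N(-s)}^{-1} V_n,$$
where $A(\cdot)$ is a $\mu$-free, right-continuous, non-decreasing random function. Since $\tilde{W}(\mu)$ is the supremum over $s\ge 0$ of affine functions of $\mu$ with slopes $-s$, it is convex and non-increasing in $\mu$ on $\mathcal{B}$. Let $\tilde{X}(\mu)=\arg\max_{s\ge 0}\{A(s)-\mu s\}$, which by \eqref{eq: X def} is exactly the stationary observed busy period under service rate $\mu$. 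Because $A$ is a pure-jump process and $V$ has a continuous density, the argmax is unique almost surely. By Danskin's theorem (or a direct two-sided computation on the convex function), one obtains the pathwise identity
$$\frac{d}{d\mu}\tilde{W}(\mu)\;=\;-\tilde{X}(\mu)\quad\text{a.s., for every }\mu\in\mathcal{B}.$$

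Next I would establish the monotonicity $\tilde{X}(\mu_2)\le \tilde{X}(\mu_1)$ whenever $\mu_1\le\mu_2$. This follows from the fact that increasing $\mu$ tilts the function $s\mapsto A(s)-\mu s$ downward more steeply, so maximizers shift to the left; a short argument via the definition of the argmax on a sample path suffices. In particular, for every $\mu\in\mathcal{B}$ we have $0\le\tilde{X}(\mu)\le\tilde{X}(\underline{\mu})$, and by Theorem \ref{thm: moment of W}(a) applied at service rate $\underline{\mu}$ (which is stable by Assumption \ref{assmpt: stable}), $\mathbb{E}[\tilde{X}(\underline{\mu})]<\infty$.

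With an integrable dominating function in hand, I would justify interchanging the derivative with the expectation: for $\mu_1,\mu_2\in\mathcal{B}$ with $\mu_1<\mu_2$, the pathwise fundamental theorem of calculus gives
$$\tilde{W}(\mu_1)-\tilde{W}(\mu_2)\;=\;\int_{\mu_1}^{\mu_2}\tilde{X}(u)\,du,$$
and dominated convergence (with dominant $\tilde{X}(\underline{\mu})(\mu_2-\mu_1)$) lets me take expectations and then differentiate, yielding $w'(\mu)=-\mathbb{E}[X_\infty(\mu)]$. The Lipschitz bound is then immediate from the same identity: $|w(\mu_1)-w(\mu_2)|=\bigl|\int_{\mu_1}^{\mu_2}\mathbb{E}[\tilde{X}(u)]\,du\bigr|\le\mathbb{E}[\tilde{X}(\underline{\mu})]|\mu_1-\mu_2|$.

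The main obstacle will be the rigorous justification of the pathwise derivative and the integrability of the dominant $\tilde{X}(\underline{\mu})$. The first part is clean once uniqueness of the argmax is established (continuous service-time density plus finiteness of the argmax under Assumption \ref{assmpt: stable}, so that the ``last zero of workload before time $0$'' is a.s.\ unique), and the second part is exactly where the busy-period moment conclusion of Theorem \ref{thm: moment of W} is needed to close the argument.
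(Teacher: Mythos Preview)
Your proposal is correct and follows essentially the same route as the paper: represent $\tilde{W}(\mu)$ pathwise via the backward maximum, differentiate to obtain $-\tilde{X}(\mu)$, use monotonicity of $\tilde{X}(\cdot)$ in $\mu$ to dominate by $\tilde{X}(\underline{\mu})$, and invoke Theorem~\ref{thm: moment of W} for integrability so that derivative and expectation can be interchanged. The only difference is in how the pathwise Lipschitz bound $|\tilde{W}(\mu+h)-\tilde{W}(\mu)|\le h\,\tilde{X}(\underline{\mu})$ is obtained: the paper imports a separate ``pathwise Lipschitz lemma'' (Lemma~\ref{lmm: Lipchitz}, taken from \cite{onlinequeue2020}) and applies it from the time $t_0=-\tilde{X}_{\underline{\mu}}(0)$ at which all the coupled systems are simultaneously empty, whereas you extract the same bound directly from the convexity/envelope structure of $\mu\mapsto\max_{s\ge 0}\{A(s)-\mu s\}$ via Danskin's theorem. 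Your route is slightly more self-contained and avoids the detour through Lemma~\ref{lmm: Lipchitz}; the paper's route, on the other hand, reuses a lemma that is needed elsewhere in the regret analysis anyway.
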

With this gradient representation, we apply a finite time sample average in each cycle to estimate the gradient in Algorithm \ref{alg: direct}. The key of Proposition \ref{prop: gradient design} is to analyze the gradient pathwisely. In detail, by the construction in Section \ref{sec: stationary haweks queue}, we could prove that almost surely,
$$\tilde{W}_\mu(0)'=\left(\max_{u\geq 0}R_\mu^\leftarrow(0,u)\right)'=-\tilde{X}_\mu(0).$$
Therefore, once we can interchange the derivative and the expectation, then Proposition \ref{prop: gradient design} is at hand and the interchange could be guaranteed by the moment condition in Theorem \ref{thm: moment of W}. For more details, see the full proof in Section \ref{subsec: proofs regret}.

\subsection{Regret Analysis}\label{subsec: regret analysis}
In this section, we show the efficiency of GOLiQ-Hawkes by providing corresponding performance bounds. Specifically, we consider the regret as the performance measure, which is commonly used in online learning literature, e.g., \cite{hazan2016introduction} and references therein. In a nutshell, regret is the difference between the performance of the system under the online learning algorithm and the performance under the optimal staffing level $\mu^*$. In detail, the regret accumulated in cycle $k$ is defined by
\begin{equation}
R_k = \underbrace{\EE\left[c(\mu_k)T_k + h_0\int_0^{T_k}W_k(t)dt\right]}_{\text{the cumulative cost in cycle $k$}}- \underbrace{f(\mu^*)T_k}_{\text{the optimal cost}}.
\end{equation}
As a result, the total regret in $L$ cycles is $R(L)=\sum_{k=1}^L R_k$.

Although we could not directly apply the analysis of GOLiQ to GOLiQ-Hawkes due to the differences between Hawkes arrivals and renewal arrivals, the analysis framework of GOLiQ in \cite{onlinequeue2020} sheds some light on the analysis of GOLiQ-Hawkes. Essentially, the success of GOLiQ is substantiated by (i) the boundedness of key queueing functions in each cycle; (ii) the geometric ergodicity of $GI/GI/1$ queue; and (iii) the convex structure of the revenue management objective. In other words, if we could provide the parallel conditions of (i) to (iii) ahead in the Hawkes version, GOLiQ-Hawkes would have an efficient convergence rate as GOLiQ does. Fortunately, our results in Section \ref{sec: main results} provide these properties adapted to Hawkes queue's version. We summarize these three properties as follows and note that these properties are direct results of our analysis in Section \ref{sec: main results}. The proofs of these properties are collected in Appendix Section \ref{subsec: Proofs of regret analysis}.

\begin{corollary}[Boundedness of Queueing Functions]\label{lmm:bounded}
Suppose Assumption \ref{assmpt: stable} to \ref{assume: light-tail of V} hold. For any control sequence $\mu_k\in\mathcal{B}$, with 
$$(\theta,\eta)=\frac{\min(\theta_0/2,\theta_1)}{6\max(\bar{\mu},1)(\eta_1+2\theta_1)}\cdot(\theta_1,\eta_1)$$
as defined in \eqref{eq: theta and eta}, the following queueing terms 
$$\EE[W_k(t)^{2m}],\EE[X_k(t)^{2m}], \EE[J_{0,k}(t)^{2m}],\EE[e^{6(\eta+2\theta)\mu_k L_{0,k}}],\EE[e^{6\theta W_k(t)}],\EE[e^{6\theta J_{0,k}(t)}]$$
are bounded by a constant $M$.
\end{corollary}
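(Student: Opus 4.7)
The plan is to reduce each of the six bounds to the stationary moment bounds of Theorem \ref{thm: moment of W} and Proposition \ref{prop: J0}, then to handle the transient and control-dependent features via pathwise monotonicity in the service rate.

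First, a direct substitution into the definition of $(\theta,\eta)$ in equation \eqref{eq: theta and eta} gives $6\theta \leq \theta_1/2$ and $6(\eta+2\theta)\bar\mu \leq \min(\theta_0/2,\theta_1)$, so every exponent appearing in the corollary lies strictly inside the m.g.f.\ domains certified by Lemma \ref{lmm: light-tail workload assumption}. Since $\theta_0,\theta_1,\eta_1$ depend only on the Hawkes parameters $(\lambda_0,m,f)$, the job-size law $V$, and the critical threshold $\underline\mu-\varepsilon$, the stationary bounds $\mathbb{E}[e^{\theta_1\tilde W(\mu)}]$ and $\mathbb{E}[e^{\varepsilon\theta_1\tilde X(\mu)}]$ from Theorem \ref{thm: moment of W}(b), and $\mathbb{E}[e^{\theta_1 \tilde J_0}]$ from Proposition \ref{prop: J0}(b), are all finite uniformly in $\mu\in\mathcal B$; the polynomial moments follow uniformly in the same way from the corresponding (a) parts. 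For a stationary m.g.f.\ bound on $L_0$, the cluster construction of Definition \ref{def: cluster hawkes} shows that $L_0$ is bounded by the sum of birth times along a Poisson branching chain of total size $|C|$; since $|C|$ follows the Borel distribution and has all exponential moments when $m<1$, and the birth times are light-tailed by Assumption \ref{assmpt: light tail b}, a Wald-type computation gives $\mathbb{E}[e^{c L_0}]<\infty$ for all sufficiently small $c$, in particular for $c=6(\eta+2\theta)\bar\mu$.

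Second, for the transient workload and observed busy period, I would exploit the pathwise monotonicity of the Loynes representation in the service rate: when the arrival stream and job sizes are held fixed and $\mu$ is increased, both $W(t)$ and $X(t)$ decrease almost surely. Coupling the cycle-$k$ system with a reference Hawkes/GI/1 queue that shares the same two-ended stationary Hawkes arrival stream and job sizes but runs under the constant slowest rate $\underline\mu$, and starting this reference queue from its Loynes stationary distribution, yields $W_k(t)\preceq \tilde W^{\underline\mu}$ and $X_k(t)\preceq \tilde X^{\underline\mu}$ in distribution for every $k$ and every $t\in[0,T_k]$. Combined with the uniform stationary bounds of the previous paragraph, this delivers the required uniform polynomial and exponential moment bounds on $W_k(t)$ and $X_k(t)$.

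Finally, the residual-history functionals $J_{0,k}(t)$ and $L_{0,k}$ depend only on the Hawkes process itself and not on the controls $\mu_k$, so when the underlying two-ended Hawkes process is taken in its stationary regime they are distributionally equal to the stationary $J_0$ and $L_0$, and their bounds are inherited directly from the first paragraph. The main (mild) obstacle is ensuring that the pathwise monotonicity coupling is performed consistently across all cycles on a single probability space; this is arranged by fixing, once and for all, a realization of the two-ended stationary Hawkes cluster system of Section \ref{subsec: stationary Hawkes} and generating every cycle's dynamics from it, so that the dominating reference queue remains stationary at absolute time and the inequality transfers to every cycle-relative time $t$ uniformly.
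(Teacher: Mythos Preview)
Your proposal follows essentially the same route as the paper: dominate the controlled system pathwise by a stationary Hawkes/$GI$/1 queue run at the slowest rate $\underline\mu$, observe that $J_{0,k}$ and $L_{0,k}$ are functionals of the stationary Hawkes process alone and hence control-free, and check that the scaling in \eqref{eq: theta and eta} forces every exponent into the m.g.f.\ domains established in Theorem~\ref{thm: moment of W} and Proposition~\ref{prop: J0}.

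Two small corrections are worth flagging. First, your statement that the Borel-distributed cluster size $|C|$ ``has all exponential moments when $m<1$'' is false: $\psi_{|C|}(s)<\infty$ only for $s<m-1-\log m$, which is precisely why the paper calibrates $\theta_0$ via \eqref{eq: theta0 def}. Your conclusion that $\mathbb{E}[e^{cL_0}]<\infty$ for small enough $c$ is still correct, but the argument must go through this finite threshold, not an unrestricted m.g.f. Second, your description of the $L_0$ bound as ``the sum of birth times along a Poisson branching chain of total size $|C|$'' captures only a single cluster's duration; the paper bounds $L_{0,k}$ by $\sum_{l=1}^{N_{B_0}} TB_l$, i.e.\ the sum of total birth times over \emph{all} clusters straddling time zero (a Poisson number of them), and then uses the compound-Poisson m.g.f.\ computation already carried out inside the proof of Proposition~\ref{prop: J0}(b). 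Neither point changes the architecture of your argument, which matches the paper's.
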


\begin{corollary}[Ergodicity of Queueing Functions in Each Cycle]\label{coro: ergodicity compact} Suppose that Assumption \ref{assmpt: stable} to \ref{assume: light-tail of V} hold. Let $\tilde{W}_k(t)$ be a stationary workload process semi-synchronously coupled with $W_k(t)$. In addition, let $X_k(t),\tilde{X}_k(t)$ be the corresponding busy time. Then, there exists a constant $A$ such that for $m=1,2$,
\begin{align*}
	\EE\left[\left\vert W_k(t)-\tilde{W}_k(t)\right\vert^m\right]&\leq Ae^{-\eta\underline{\mu} t}\\
	\EE\left[\left\vert X_k(t)-\tilde{X}_k(t)\right\vert^m\right]&\leq Ae^{-0.5\eta\underline{\mu} t},
\end{align*}
where $\eta$ is specified in \eqref{eq: theta and eta}.
\end{corollary}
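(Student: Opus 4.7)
The plan is to combine the pathwise coupling estimates from Theorem \ref{thm: geometric ergodicity} with the uniform moment and exponential-moment bounds provided by Corollary \ref{lmm:bounded}, converting the conditional bounds into unconditional ones via Cauchy--Schwarz and H\"older.

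For the workload claim, I would apply Theorem \ref{thm: geometric ergodicity} with $(W^1,X^1,N_0^1)=(W_k,X_k,N_{0,k})$, $(W^2,X^2,N_0^2)=(\tilde{W}_k,\tilde{X}_k,\tilde{N}_{0,k})$, and $\mu=\mu_k\in[\underline{\mu},\bar{\mu}]$. Using the elementary inequality $\exp(-(x)_+)\leq \exp(-x)$ (which is valid for all $x\in\mathbb{R}$), the conditional bound simplifies to
$$\EE\bigl[|W_k(t)-\tilde{W}_k(t)|^m\,\big|\,\mathcal{F}_0\bigr]\leq D_0^m\, e^{-\eta\mu_k t}\sum_{i=1}^{2}\exp\bigl((\eta+2\theta)\mu_k L_0^i+\theta(W^i(0)+J_0^i(0))\bigr).$$
Taking the outer expectation and applying Cauchy--Schwarz separates $D_0^m$ from the exponential factor. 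The generalized H\"older inequality with three equal exponents $3$ then splits $\EE[\exp(2(\eta+2\theta)\mu_k L_0^i+2\theta W^i(0)+2\theta J_0^i(0))]$ into the product of $\EE[\exp(6(\eta+2\theta)\mu_k L_0^i)]^{1/3}$, $\EE[\exp(6\theta W^i(0))]^{1/3}$, and $\EE[\exp(6\theta J_0^i(0))]^{1/3}$, each uniformly bounded by $M^{1/3}$ by Corollary \ref{lmm:bounded} (the stationary analogs for $i=2$ being finite by Theorem \ref{thm: moment of W}). For $\EE[D_0^{2m}]$ with $m\leq 2$, I would expand $D_0\leq W^1(0)+W^2(0)+J_0^1(0)+J_0^2(0)$ and apply the polynomial moment bounds $\EE[W_k(t)^{2m}],\EE[J_{0,k}(t)^{2m}]\leq M$. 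Since $\mu_k\geq\underline{\mu}$, the prefactor $e^{-\eta\mu_k t}$ is at most $e^{-\eta\underline{\mu} t}$, giving the first inequality.

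For the observed busy period, the same argument applies but with the prefactor $(X^1(0)+X^2(0)+t)^m$ in place of $D_0^m$. Cauchy--Schwarz turns this into $[\EE[(X_k(0)+\tilde{X}_k(0)+t)^{2m}]]^{1/2}\cdot [\EE[\exp(-2(\cdot)_+)]]^{1/2}$. The first factor is at most $C(1+t^m)$ by the polynomial moment bounds for $X_k$ and $\tilde{X}$, while the second is controlled exactly as in the workload case, producing an overall bound of the form $C'(1+t^m)e^{-\eta\mu_k t}$. To reach the stated $e^{-0.5\eta\underline{\mu} t}$ rate, note that $(1+t^m)e^{-0.5\eta\underline{\mu} t}$ is bounded on $[0,\infty)$ by some constant $K$, so
$$(1+t^m)e^{-\eta\mu_k t}=\bigl[(1+t^m)e^{-0.5\eta\underline{\mu} t}\bigr]\cdot e^{-(\eta\mu_k-0.5\eta\underline{\mu})t}\leq K\cdot e^{-0.5\eta\underline{\mu} t},$$
since $\mu_k\geq\underline{\mu}$ implies $\eta\mu_k-0.5\eta\underline{\mu}\geq 0.5\eta\underline{\mu}$.

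The main obstacle is really just careful bookkeeping: the exponents $6(\eta+2\theta)\mu_k$, $6\theta$, and the polynomial moments of order $2m\leq 4$ that arise after the Cauchy--Schwarz plus H\"older step must all stay within the range uniformly controlled in $k$ by Corollary \ref{lmm:bounded}. Since Corollary \ref{lmm:bounded} is designed with exactly this decomposition in mind, the exponents line up by construction. The only place where the rate degrades is the absorption of the polynomial prefactor $1+t^m$ into half of the exponential decay in the busy-period bound, which is precisely what produces the factor $1/2$ in the exponent of the second inequality.
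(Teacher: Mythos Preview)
Your proposal is correct and matches the paper's proof essentially step for step: the paper applies a single H\"older inequality with exponents $(2,6,6,6)$ to the product $D_0^m e^{(\eta+2\theta)\mu_k L_0^i}e^{\theta W^i(0)}e^{\theta J_0^i(0)}$, which is exactly equivalent to your two-stage Cauchy--Schwarz followed by H\"older with exponents $(3,3,3)$. The busy-period argument in the paper likewise absorbs the polynomial factor $t^2$ into half of the exponential decay via the bound $t^2e^{-0.5\eta\underline{\mu}t}\leq C$, precisely as you do.
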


\begin{corollary}[Convex and Smoothness]
\label{coro: convex and smooth}
Suppose Assumption \ref{assmpt: stable} to \ref{assum: convex of cmu} hold. The objective function $f(\mu)$ has a continuous first-order derivative denoted by $\nabla f(\mu)$. There exist finite positive constants $K_0\leq1$ and $K_1>K_0$ such that for all $\mu\in\mathcal{B}$
\begin{enumerate}
	\item $(\mu-\mu^*)\nabla f(\mu)\geq K_0(\mu-\mu^*)^2$
	\item $\vert \nabla f(\mu)\vert \leq K_1 \vert \mu-\mu^*\vert $
\end{enumerate}
\end{corollary}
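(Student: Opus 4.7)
The plan is to establish, in sequence, continuous differentiability of $f = h_0 w + c$, convexity on $\mathcal{B}$, and finally the two quantitative bounds, of which item 1 is the substantive difficulty. The first two pieces are essentially bookkeeping on top of earlier results; the third pieces must be extracted by combining Proposition \ref{prop: gradient design}, Theorem \ref{thm: moment of W}, and the coupling machinery of Section \ref{sec: stationary haweks queue}.

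First, differentiability and an explicit formula come from Proposition \ref{prop: gradient design}: it yields $w'(\mu) = -\EE[X_\infty(\mu)]$, which is finite on $\mathcal{B}$ by Theorem \ref{thm: moment of W}. Continuity in $\mu$ follows from dominated convergence applied to the monotone coupling of Hawkes/GI/1 queues at different service rates, using the integrable envelope $X_\infty(\underline{\mu})$ from Theorem \ref{thm: moment of W}. Combined with continuity of $c'$ from Assumption \ref{assum: convex of cmu}, this gives
$$\nabla f(\mu) = -h_0 \EE[X_\infty(\mu)] + c'(\mu)$$
continuous on $\mathcal{B}$. Convexity of $f$ comes from the Loynes-style construction in Section \ref{subsec: CFTP construction}: almost surely, $\tilde{W}_\mu(0) = \sup_{s\geq 0}\{\sum_{i=-1}^{-N(-s)} V_i - \mu s\}$ is a pointwise supremum of affine functions of $\mu$, hence convex and non-increasing in $\mu$. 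Taking expectations preserves convexity, so $w$ is convex; adding the convex $c$ makes $f$ convex on $\mathcal{B}$.

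For item 2, I would prove that $\nabla f$ is Lipschitz on $\mathcal{B}$. Proposition \ref{prop: gradient design} already yields Lipschitz continuity of $w$; the further Lipschitz bound on $w'(\mu) = -\EE[X_\infty(\mu)]$ can be obtained by coupling two Hawkes/GI/1 queues with identical cluster structure but distinct service rates (in the spirit of Section \ref{sec: stationary haweks queue}) and controlling the resulting difference of busy periods by the moment bounds of Theorem \ref{thm: moment of W}. Coupled with Lipschitz continuity of $c'$ on $\mathcal{B}$ (which holds under any standard smoothness strengthening of Assumption \ref{assum: convex of cmu}, e.g.\ twice continuous differentiability of $c$), this gives item 2 via $\nabla f(\mu^*) = 0$ at an interior $\mu^*$, with the boundary case absorbed into the Lipschitz constant.

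Item 1 is the main obstacle: convexity alone produces only $(\mu - \mu^*)\nabla f(\mu) \geq 0$, not the quadratic lower bound $K_0(\mu-\mu^*)^2$. To upgrade convexity to strong monotonicity with a uniform positive $K_0$, I would show that $\mu\mapsto\EE[X_\infty(\mu)]$ is \emph{strictly} decreasing on $\mathcal{B}$ with a uniform positive rate, i.e.\ that increasing the service rate by $\delta$ shrinks the expected stationary busy period by at least a constant multiple of $\delta$, uniformly over $\mu\in\mathcal{B}$. This non-degeneracy is most naturally obtained by comparison with the dominant $M/GI/1$ queue of Section \ref{subsec: mG1}, whose workload sensitivity to the service rate is explicit via Pollaczek--Khinchine, and then transported back to the Hawkes queue through the pathwise dominance $\tilde{W}(t)\leq \hat{W}(t)+J^\leftarrow_0(t)$ and the moment controls already in hand. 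Producing this uniform, quantitative lower bound on the $\mu$-sensitivity of the stationary busy period is the delicate step where the bulk of the technical effort will lie; the rest of the argument is a routine assembly of the ingredients from Sections \ref{sec: stationary haweks queue}--\ref{sec: ergodicity}.
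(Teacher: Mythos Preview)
Your route diverges from the paper's at the crucial step, and the detour you take for item 1 is both harder and has a directional mismatch.

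The paper's proof is a two-line compactness argument at the level of the second derivative. It writes $\nabla^2 f(\mu)=-h_0\,\EE[X_\infty(\mu)]'+c''(\mu)$, asserts this is strictly positive on $\mathcal{B}$ (because $\mu\mapsto\EE[X_\infty(\mu)]$ is strictly decreasing and $c''\geq 0$), sets $K_0=\min_{\mu\in\mathcal{B}}\nabla^2 f(\mu)\wedge 1$ and $K_1=\max_{\mu\in\mathcal{B}}\nabla^2 f(\mu)$, and then gets both inequalities from a first-order Taylor expansion with Lagrange remainder together with $\nabla f(\mu^*)=0$. That is the entire argument. The paper treats the differentiability of $\mu\mapsto\EE[X_\infty(\mu)]$ and the existence of $c''$ as given; it does not route anything through the $M/GI/1$ comparison or through coupling bounds on busy-period differences.

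Your plan for item 2 (Lipschitz $\nabla f$ plus $\nabla f(\mu^*)=0$) is equivalent to the paper's $K_1=\max\nabla^2 f$, just derived at the first-derivative level; that is fine. The problem is item 1. You propose to obtain a uniform positive rate of decrease of $\EE[X_\infty(\mu)]$ by transporting the explicit Pollaczek--Khinchine sensitivity of the auxiliary $M/GI/1$ queue through the domination $\tilde W(t)\leq\hat W(t)+J_0^\leftarrow(t)$. But that inequality is one-sided: it bounds the Hawkes workload from \emph{above}, whereas a quantitative strong-convexity constant requires a \emph{lower} bound on how much the Hawkes busy period shrinks when $\mu$ increases. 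An upper bound on $\tilde W$ gives you no traction on that direction, and nothing in Section~\ref{subsec: mG1} provides the reverse inequality. So the mechanism you propose for producing $K_0>0$ does not close. The paper avoids this entirely by taking the minimum of a continuous positive function over the compact interval $\mathcal{B}$; that is the simpler and correct path.
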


Armed by the boundedness, exponential ergodicity and convex property, the efficiency of GOLiQ-Hawkes could be summarized by the following theorem. In short, we will show that with properly chosen hyper-parameters $\eta_k$ and $T_k$, GOLiQ-Hawkes is efficient in the sense that it has a logarithmic regret, which also indicates that the decision $\mu_k$ of GOLiQ-Hawkes will also converge to the clairvoyant policy $\mu^*$ in a rapid speed.

\begin{thm}\label{thm: main}
Under Assumption \ref{assmpt: stable} to \ref{assum: convex of cmu}, let $T_k=a_T+c_T\log(k)$, $\eta_k=c_\eta k^{-1}$ with any
$$c_T\geq 2(\eta \underline{\mu}\xi)^{-1}\max(\log(8A/K_0),1),\quad  \text{and}\quad c_\eta\geq 2k^{-1}/K_0,$$
and any $a_T\geq0$, where $\eta$ is specified in \eqref{eq: theta and eta}, $\xi$ is the warm-up rate in Alg.\ref{alg: direct}, $A$ is defined in Corollary \ref{coro: ergodicity compact}, and $K_0$ is the strong convexity constant in Corollary \ref{coro: convex and smooth}.
Then there exists a constant $C_{alg}>0$ such that, for all $L\geq 1$,
$$R(L)\leq C_{alg} \log(L)^2.$$
In particular, we have 
$$R(L) = O(\log(L)^2) = O(\log(M_L)^2),$$
with $M_L=\sum_{k=1}^L T_k$, i.e., the total units of time elapsed in $L$ cycles.
\end{thm}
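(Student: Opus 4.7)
The plan is to port the regret analysis framework of GOLiQ from \cite{onlinequeue2020} to the Hawkes setting. Its three driving inputs---uniform moment bounds, geometric ergodicity with an explicit rate, and strong convexity/smoothness of $f$---are exactly the content of Corollaries \ref{lmm:bounded}, \ref{coro: ergodicity compact}, and \ref{coro: convex and smooth}, so the overall architecture can be reused once these Hawkes-specific ingredients are in hand.

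First, I would decompose each cycle's regret as
\begin{equation*}
R_k = T_k\bigl(\EE[f(\mu_k)] - f(\mu^*)\bigr) + h_0\int_0^{T_k}\bigl(\EE[W_k(t)] - \EE[w(\mu_k)]\bigr)\,dt,
\end{equation*}
separating a suboptimality part driven by how close $\mu_k$ is to $\mu^*$ from a transient-to-stationary bias. The second term is attacked via Corollary \ref{coro: ergodicity compact}, coupling $W_k$ with a stationary Hawkes$/GI/1$ under $\mu_k$ through the semi-synchronous coupling of Section \ref{subsec: proof coupling}.

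Second, I would run a standard projected-SGD one-step analysis to bound $\EE[(\mu_k - \mu^*)^2]$. Writing $H_k = \nabla f(\mu_k) + b_k + \xi_k$ with $b_k$ the conditional bias and $\xi_k$ the mean-zero martingale noise, strong convexity (Corollary \ref{coro: convex and smooth}(1)) yields
\begin{equation*}
\EE[(\mu_{k+1}-\mu^*)^2] \leq (1 - 2\eta_k K_0)\EE[(\mu_k - \mu^*)^2] + 2\eta_k\EE[|b_k|\,|\mu_k-\mu^*|] + \eta_k^2\EE[H_k^2].
\end{equation*}
The bias $b_k$ is the gap between the time-average of $X_k(t)$ on $[\xi T_k, T_k]$ and its stationary mean $\EE[X_\infty(\mu_k)]$ (identified with $-w'(\mu_k)+c'(\mu_k)$ via Proposition \ref{prop: gradient design}); applying Corollary \ref{coro: ergodicity compact} pointwise in $t$ and integrating over the warm-up window gives $|b_k| = O(e^{-0.5\eta\underline{\mu}\xi T_k}) = O(k^{-0.5 c_T\eta\underline{\mu}\xi})$, which is $o(k^{-1})$ under the hypothesis $c_T \geq 2(\eta\underline{\mu}\xi)^{-1}$. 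The second moment $\EE[H_k^2]$ is uniformly bounded by Corollary \ref{lmm:bounded}. With $\eta_k = c_\eta/k$ and $c_\eta K_0 \geq 2$, an induction on $k$ solves the recursion and delivers $\EE[(\mu_k-\mu^*)^2] \leq C_*/k$ for an explicit $C_*$. Then smoothness (Corollary \ref{coro: convex and smooth}(2)) gives $\EE[f(\mu_k)] - f(\mu^*) \leq (K_1/2)\EE[(\mu_k-\mu^*)^2] \leq K_1 C_*/(2k)$, so
\begin{equation*}
\sum_{k=1}^L T_k\bigl(\EE[f(\mu_k)]-f(\mu^*)\bigr) \leq \frac{K_1 C_*}{2}\sum_{k=1}^L \frac{a_T + c_T\log k}{k} = O(\log^2 L),
\end{equation*}
matching the target rate for the suboptimality piece.

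The main obstacle is controlling the per-cycle transient-bias terms from the decomposition so that they do not swamp the $O(\log^2 L)$ gain from the SGD part. A naive one-shot application of Corollary \ref{coro: ergodicity compact} to cycle $k$ (using the uniform moment bounds of Corollary \ref{lmm:bounded} for the initial state) yields only $O(1)$ per cycle, which would contribute $O(L)$ in total. To repair this I would exploit that the initial state of cycle $k$ is the terminal state of cycle $k-1$, which under $\mu_{k-1}$ is already exponentially close to stationary by Corollary \ref{coro: ergodicity compact} applied across cycle $k-1$, and combine this with $|\mu_k - \mu_{k-1}| = O(\eta_{k-1}) = O(1/k)$ from the SGD step and a perturbation estimate showing that the stationary Hawkes$/GI/1$ workload is Lipschitz in $\mu$. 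The hardest piece is the Lipschitz/perturbation estimate in $\mu$: its natural proof re-runs the semi-synchronous coupling of Section \ref{subsec: proof coupling} with the two coupled systems differing only in the service rate, and bounds the resulting drift gap using Theorem \ref{thm: geometric ergodicity} together with the moment bounds of Theorem \ref{thm: moment of W}. The resulting per-cycle transient bias becomes $O(1/k)$, its sum contributes $O(\log L)$, and the total regret stays within the claimed $O(\log^2 L)$.
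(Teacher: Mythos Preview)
Your proposal follows the same two-term regret decomposition as the paper and correctly identifies both the main obstacle in the transient piece and its resolution (carry over near-stationarity from cycle $k-1$, use $|\mu_k-\mu_{k-1}|=O(1/k)$, and a Lipschitz-in-$\mu$ estimate for the workload). For the suboptimality piece the paper invokes Proposition~\ref{prop: chen R2} as a black box, which is exactly the SGD recursion you sketch by hand, so that part is identical in content.

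The one genuine difference is the mechanism for the Lipschitz/perturbation step. You propose to build a semi-synchronous coupling between two Hawkes queues with \emph{different} service rates, which would require new coupling machinery beyond Theorem~\ref{thm: geometric ergodicity}. The paper avoids this: it conditions on the event $\{\max(L_{0,k-1},\tilde L_{0,k-1})\le T_{k-1}\}$ that all residual jobs from the Hawkes memories have arrived by the end of cycle $k-1$. On this event the two systems in cycle $k$ share exactly the same arrivals and job sizes, so they are \emph{fully} synchronously coupled and the existing $GI/GI/1$ Pathwise Lipschitz Lemma (Lemma~\ref{lmm: Lipchitz}) applies directly; the complementary event has probability $O(e^{-cT_{k-1}})=O(1/k)$ and is disposed of by Cauchy--Schwarz with the moment bounds of Corollary~\ref{lmm:bounded}. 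This conditioning trick is shorter than developing a new different-rate coupling, and is worth knowing. One minor bookkeeping correction: the warm-up portion of the transient integral carries a factor $T_k$, so that per-cycle contribution is $O(T_k/k)=O(\log(k)/k)$ rather than $O(1/k)$; its sum is $O(\log^2 L)$, not $O(\log L)$, but this is still within the target bound.
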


\begin{remark}
Compared to conventional $\log(L)$ regret for online gradient descent \cite{hazan2016introduction}, the additional logarithm comes from the transient behavior of the Hawkes queues and the exponential convergence rate of Hawkes queues.
\end{remark}


With the three critical properties of Hawkes queues, the efficiency of GOLiQ-Hawkes could be provided following the analysis framework of \cite{onlinequeue2020}. Basically, the key idea of the analysis is to decompose the regret into the following two parts:
\begin{align*}
R_{1,k}&=\EE\left[h_0\int_{0}^{T_k}W_k(t)-w(\mu_k)dt\right]\\
R_{2,k}&=\EE\left[f(\mu_k)-f(\mu^*)\right]T_k.
\end{align*}
Then the total regret cumulated in $L$ cycles is decomposed accordingly as
\begin{equation*}
R(L)=\sum_{k=1}^L R_k=\sum_{k=1}^{L}R_{1,k}+\sum_{k=1}^L R_{2,k}\triangleq R_1(L)+R_2(L).
\end{equation*}
Intuitively, $R_1(L)$ is the regret caused by transient behavior of the queueing system (regret of the transient behaviors). On the other hand, $R_2(L)$ can be interpreted as the regret driven by suboptimality of the decision variable $\mu_k$ (regret of suboptimality). For the regret of transient behaviors $R_{1}$, we could apply the exponential ergodicity result. The regret of suboptimality $R_{2}$ hinges on the bias and variance of the gradient estimator $H_k$. We analyze the bias by ergodicity and the variance by boundedness. With the help of convexity, the regret of suboptimality $R_2$ can be well controlled, too. For the sake of brevity, we would like to omit the full proof here and defer the full proof in the  Appendix Section \ref{subsec: proofs regret}.

\section{Numerical Results}\label{sec: num}
In this section, we first confirm the effectiveness of GOLiQ-Hawkes via numerical experiments. Then we investigate the impact of autocorrelation of the Hawkes process on the staffing decision by comparing it with a $GI/GI/1$ queue with same marginal inter-arrival time distribution. Finally, we study the optimal staffing in a heavy-traffic regime to see  whether the classic ``square-root rule" still holds for Hawkes queues. In all numerical experiments, the stationary Hawkes arrivals is generated using the perfect sampling algorithm 1 developed in \cite{perfect2020}.
\subsection{Performance Evaluation of GOLiQ-Hawkes}
\label{subsec: base examples}
We consider problem \eqref{eq:objective} with a quadratic staffing cost $c(\mu)=c_0\mu^2$ in this section. In specific, we set $h_0=c_0=0.5$. As a result, the objective \eqref{eq:objective} reduces to
\begin{equation}\label{CostNumerical}
\min_{\mu\in\mathcal{B}}f(\mu)=\min_{\mu\in\mathcal{B}}\left\lbrace 0.5\mu^2+0.5\mathbb{E}\left[W_{\infty}(\mu)\right] \right\rbrace.
\end{equation}
Since there is no closed form of $\EE[W_\infty(\mu)]$, the exact solution $\mu^*$ of problem \eqref{CostNumerical} is unavailable. We apply the \textit{naive grid search method} (NGS) and set the solution of NGS as the benchmark for regret estimation. In detail, for each candidate $\mu$, we operate Hawkes$/GI/1$ queues under $\mu$ for $T=100,000$ unit times and estimate $\EE[W_\infty(\mu)]$ by time average estimator $\frac{1}{0.5T}\int_{0.5T}^{T}W(t)dt$. The estimate $\hat{f}(\mu)$ is achieved based on $100$ independent time averaging estimations above. By comparing all $\hat{f}(\mu)$, we choose $\mu$ with the smallest $\hat{f}(\mu)$ as yardstick $\mu^*$, with which we are able to benchmark the solutions of GOLiQ-Hawkes.

We consider non-Markovian Hawkes arrivals with unit background intensity $\lambda_0=1$ and Gamma kernel $m(t)=2te^{-2t}$. In this case, $m=\int_{0}^{\infty}m(t)dt=1/2$, and the average arrival rate $\lambda=2$. For service times, we consider Erlang-2 distribution with a rate of $2\mu$. The NGS method gives out the benchmark that $\hat{\mu}^*=2.84$ and $\hat{f}(\mu^*)=6.11$ (with a precision of 0.01), and the cost function is shown in the top left panel of Figure \ref{fig: HawkesGI1} as well. By Algorithm \ref{alg: direct} and Theorem \ref{thm: main}, we select step length $\eta_k=1/k$ and cycle length $T_k=10+20\log(k)$ with initial point $\mu_0=10$. In Figure \ref{fig: HawkesGI1}, we again give a sample path of staffing level $\mu_k$ and the estimated regret curve (obtained by 1,000 independent Monte-Carlos runs). We find that the staffing level $\mu_k$ converges to the optimal value $\mu^*$ rapidly, and the regret grows as a logarithmic function of time $t$. Particularly, a simple linear regression with respect to $\left(\sqrt{R(t)},\log(t)\right)$ (bottom right panel) validates our regret bound in Theorem \ref{thm: main}.

\begin{figure}

\centering
\includegraphics[width=0.8\linewidth]{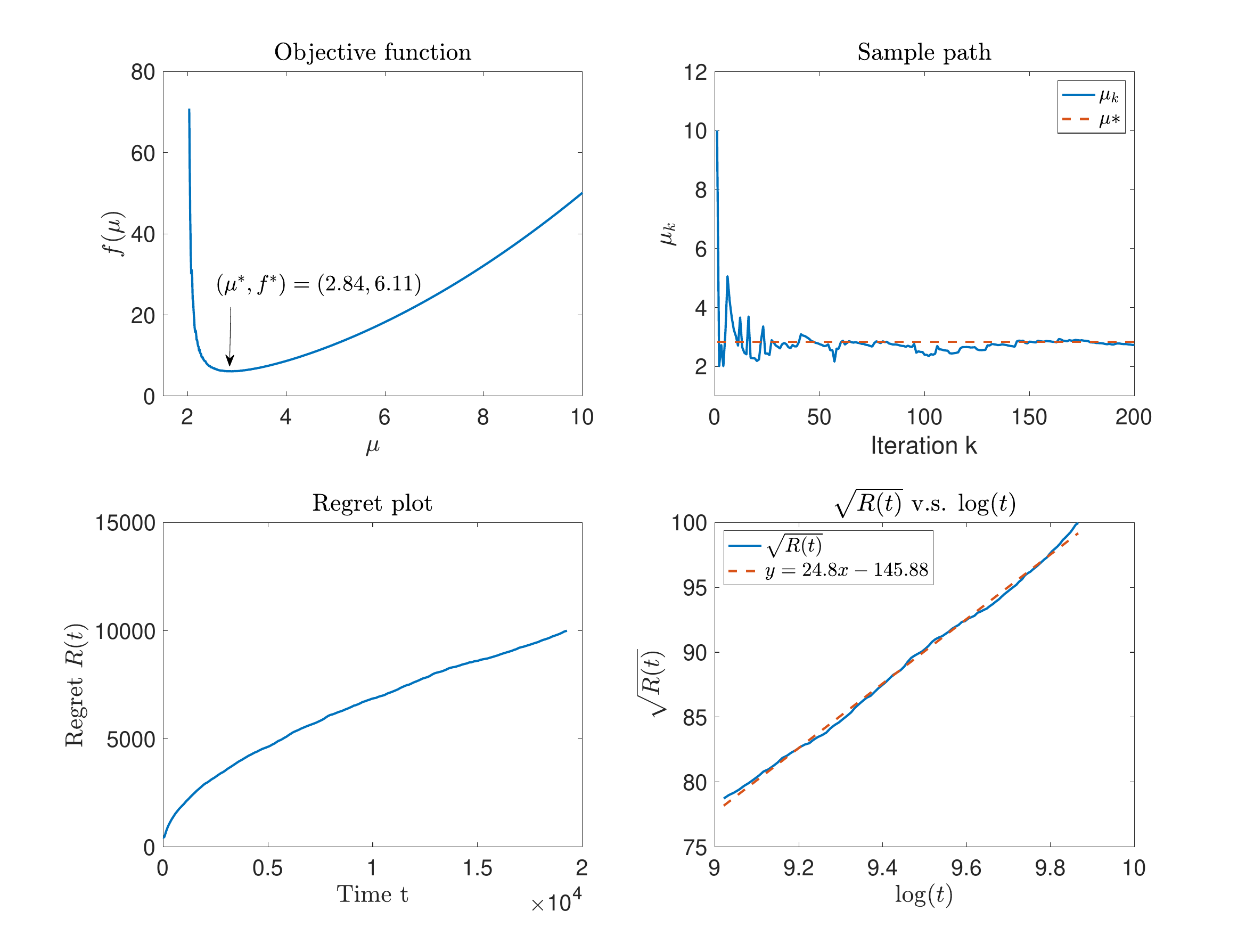}
\caption{Online optimal staffing for non-Markovian Hawkes$/GI/1$ by GOLiQ-Hawkes with $\mu^*=2.84$, $\eta_k=1/k$ and $T_k=10+20\log(k)$: (i) objective function (top left); (ii) sample path of the staffing level $\mu_k$ (top right); (iii) estimated regret (bottom left); (iv) square root of regret versus logarithmic of the total time elapsed (bottom right).}
\label{fig: HawkesGI1}
\end{figure}

We further test the robustness of GOLiQ-Hawkes by applying different hyperparameters $\eta_k$ and $D_k$. Specifically, we first validate the robustness of $\eta_k$ by choosing $\eta_k=c_\eta k^{-1}$ with $c_\eta\in\{0.5,1,3,5\}$ and fixing $T_k=10+20\log k$. Next, we verify the robustness of $T_k$ by selecting $T_k=10+c_T\log k$ with $c_T\in\{10,20,30\}$ and fixing $\eta_k=k^{-1}$. For each case, the regret curve is estimated by 1,000 independent runs, and in Figure \ref{fig: robustness}, we also draw $\sqrt{R(t)}$ v.s. $\log(t)$ plot. Figure \ref{fig: robustness} reveals that GOLiQ-Hawkes continues to perform efficiently with different hyperparameters with all regret curves exhibiting a logarithmic order. In summary, our numerical experiments illustrate that GOLiQ-Hawkes performs effectively with a logarithmic regret and that GOLiQ-Hawkes is robust to the hyperparameter choices.

\begin{figure}
\centering 
\includegraphics[width=\linewidth]{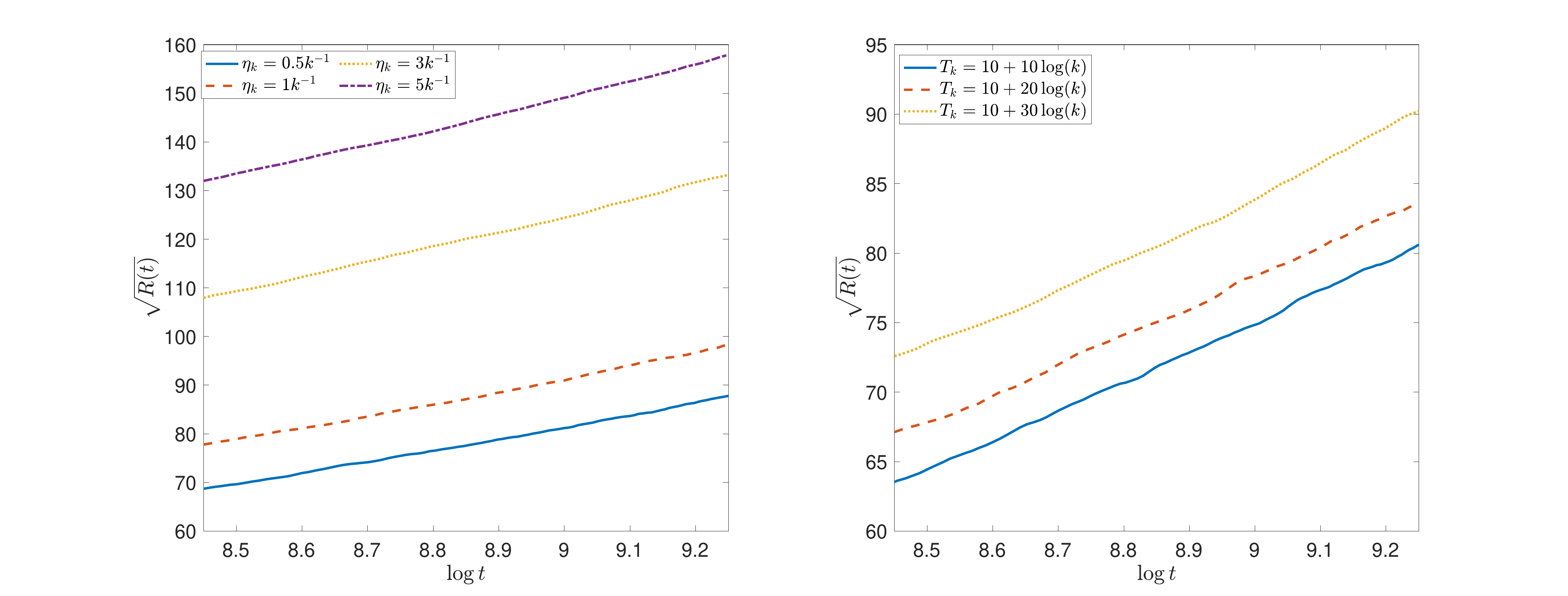}
\caption{Simulated $\log t$ v.s. $\sqrt{R(t)}$ plot with different hyperparameters (i) step size $\eta_k$ robustness (left panel); (ii) cycle length $T_k$ (right panel). All regret curves are estimated by averaging 1,000 independent replications.}
\label{fig: robustness}
\end{figure}

\subsection{Impact of Self-excitement of Hawkes arrivals}
\label{subsec: self-excitement}
In this section, we conduct numerical experiments to investigate the impact of the self-excitement of Hawkes arrivals on the optimal stuffing level $\mu^*$. Specifically, we compare the optimal staffing level between queueing systems with Hawkes arrivals and systems with renewal arrivals, of which the interarrival times have the same distribution as the marginal distribution of the stationary Hawkes process' interarrival times.

For this purpose, we generate the arrival process $(GI)$ by the following shuffling method: (i) we first generate a sufficiently long stationary Hawkes process's path; (ii) all of the interarrival times of this Hawkes path are randomly shuffled; (iii) the ideal $GI$ renewal process is generated by splicing these shuffled interarrival times. Then GOLiQ-Hawkes is applied to both the Hawkes$/GI/1$ queue and the corresponding $GI/GI/1$ queue to obtain the optimal level $\mu_{H}^*$(the optimal staffing for Hawkes queue) and $\mu_{GI}^*$ (the optimal level for $GI/GI/1$ queue).

Since we intend to investigate the impact of the autocorrelation on optimal staffing, we compare $\mu_{H}^*$ with $\mu_{GI}^*$ under different autocorrelation levels for objective \eqref{CostNumerical}. A convenient indicator of autocorrelation level for Hawkes process is $m=\int_{0}^{\infty}m(t)dt$, i.e., the average number of offspring of a single arrival. Therefore, in this experiment, we consider kernels in the form:
$$m(t)=a\cdot e^{-2t},\quad a\in\{0,0.2,\cdots,1.8\},$$
so that $m=\{0,0.1,\cdots,0.9\}$. As higher autocorrelation level results in a larger arrival rate, to make experiments comparable, the background intensity $\lambda_0$ is chosen as $\lambda_0=2\cdot(1-m)$ accordingly so that the total arrival rate $\lambda=\frac{\lambda_0}{1-m}=2$ across all the cases. In this way, we have a sequence of systems whose total arrival rates are the same but the auto-correlation level increases with $m$. As for the service distribution, we consider exponential service times for simplicity. Then, we investigate the impact of the auto-correlation level of the Hawkes process by comparing the difference of $\mu^*_{GI}$ and $\mu^*_{H}$ under different auto-correlation level $m$ in each case.

For each case, we apply the GOLiQ-Hawkes for $L=300$ iterations for $100$ independent replications. The  $\mu_{\text{Hawkes}}^*$ and $\mu_{\text{GI}}^*$ is estimated by averaging the staffing level in the last iteration, i.e., $\mu_{L}$ of the $100$ independent paths. The hyperparameters for GOLiQ-Hawkes in both Hawkes queue and $GI/M/1$ queue are $\eta_k=3/k,T_k=10+20\log(k)$. The results are reported in Table \ref{tab: compare Hawkes}.

\begin{table}
	\centering
	\begin{tabular}{cccccc}
		\hline
		\multirow{2}{*}{m} & \multicolumn{2}{c}{Hawkes/M/1} & \multicolumn{2}{c}{GI/M/1} & \multirow{2}{*}{\begin{tabular}[c]{@{}c@{}}Relative Increase \\ in $\mu^*$\end{tabular}} \\ \cline{2-5}
		& $\mu_{H}^*$   & Variance   & $\mu_{GI}^*$ & Variance &                             
		\\ \midrule
		0            & 2.61                                                       & 0.0014             & 2.61                & 0.0016             & 0.00\%                                                                             \\
		0.1          & 2.64                                                       & 0.0019             & 2.62                & 0.0015             & 0.64\%                                                                             \\
		0.2          & 2.68                                                       & 0.0019             & 2.65                & 0.0014             & 1.13\%                                                                             \\
		0.3          & 2.72                                                       & 0.0017             & 2.66                & 0.0016             & 2.24\%                                                                             \\
		0.4          & 2.80                                                       & 0.0028             & 2.70                & 0.0021             & 3.88\%                                                                             \\
		0.5          & 2.90                                                       & 0.0027             & 2.74                & 0.0013             & 5.56\%                                                                             \\
		0.6          & 3.05                                                       & 0.0039             & 2.79                & 0.0026             & 9.12\%                                                                             \\
		0.7          & 3.27                                                       & 0.0066             & 2.87                & 0.0016             & 14.05\%                                                                            \\
		0.8          & 3.70                                                       & 0.0130             & 3.03                & 0.0034             & 22.14\%                                                                            \\
		0.9          & 4.62                                                       & 0.0583             & 3.40                & 0.0052             & 35.97\%                                                                            \\ \bottomrule
	\end{tabular}
	\caption{The estimated value and variance of optimal service capacity solved by GOLiQ-Hawkes for 100 replications for Hawkes$/M/1$ and the corresponding $GI/M/1$ with different levels of self-excitement $m$.}\label{tab: compare Hawkes}
\end{table}

Table \ref{tab: compare Hawkes} illustrates that as the autocorrelation level $m$ grows, both Hawkes$/M/1$ and $GI/M/1$ queues need higher staffing levels to accommodate the surge of arrivals. However, the Hawkes$/M/1$ queue needs significantly more service capacity than the $GI/M/1$ queue due to Hawkes' autocorrelation at all levels of $m$. In addition, the relative difference in staffing levels, i.e.,
$$\frac{\mu_{H}^*-\mu_{GI}^*}{\mu_{GI}^*}$$
grows with $m$, which implies that the difference between Hawkes queues and $GI/GI/1$ queues inclines to be sharper as the autocorrelation level increases, and consequently, the Hawkes$/GI/1$ queue needs significantly more service capacity as Hawkes' autocorrelation level increases.

\subsection{Does ``square root rule" hold for Hawkes queues?}
\label{subsec: sqrt rule}
The square root rule is prevalent in many queueing models for determining the optimal staffing level. Specifically, \cite{LeeWard2014} proved asymptotic optimality of the square root rule for $GI/GI/1$ queue with linear staffing cost, i.e. the optimal staffing level satisfies
$$\mu^*=\lambda+O(\sqrt{\lambda}),\quad \text{as }\lambda\rightarrow\infty.$$

To check whether the square root rule still holds for Hawkes$/GI/1$ queue, we design a set of numerical experiments as follows. We consider the staffing problem with linear cost as
$$\min_{\mu} 2\mu+0.1\EE[W_\infty(\mu)]$$
on a set of Hawkes$/GI/1$ queues with same exponential service time distribution but with increasing avaerage intensity $\lambda$ in the Hawkes arrival processes. We then observe how the optimal staffing level $\mu^*_{H}$ changes with $\lambda$ and whether it increases following the square root rule. As a comparison, we also solve the same staffing problem for a $GI/GI/1$ queue with IID inter-arrival times follow the same marginal distribution as that of Hawkes queue constructed in the same way as described in Section \ref{subsec: self-excitement}.

As the major difference lies in autocorrelation, we send the arrival rate $\lambda$ to infinity by letting $m\rightarrow1$. In specific, we consider a sequence of Hawkes queueing systems indexed by $n$ with the same background intensity $\lambda_0=1$. However, the kernel of the Hawkes process in each system changes with $n$ in the form of  $m(t)=(2-2/n)e^{-2t}$ so that the total arrival rate 
$$\lambda=\frac{\lambda_0}{1-m}=n$$
in the $n$-th system. In the experiment, we consider $n\in\{5,25,45,65,85,105\}$. We investigate how $\mu^*_{H}$ and $\mu^*_{GI}$ change with $n$ and whether they obey the square root rule.

To derive the solutions $\mu^*_{H}$ and $\mu^*_{GI}$, in each case, we apply GOLiQ-Hawkes to both Hawkes$/GI/1$ and $GI/GI/1$ systems by 100 replications, and the optimal levels are estimated in the same procedure in Section \ref{subsec: self-excitement} with the same hyperparameters. If the square root rule still holds for Hawkes$/GI/1$ queue, then $\mu_H^*-\lambda$ shall be linear with  $\sqrt{\lambda}$ and the ratio between them shall be close to a constant. The results are reported in Table \ref{tab: sqrt rule}.


In Table \ref{tab: sqrt rule}, we report the total arrival rate, the average number of descendants $m$, the \textit{auto-correlation function} (ACF) with lag-1 and the estimated $\mu^*_H$ (and $\mu^*_{GI}$). From Table \ref{tab: sqrt rule}, we can observe that as $m$ increases, $\lambda$ and ACF increase at the same time, which results in the increase of both staffing levels $\mu_{H}^*$ and $\mu_{GI}^*$. Parallel to the experiment in Section \ref{subsec: self-excitement}, the need for staffing in Hawkes queues $\mu^*_H$ is always larger than the staffing in renewal case $\mu^*_{GI}$. In addition, the difference between Hawkes queue and $GI/GI/1$ queue is quite sharp because a mild autocorrelation (ACF$=0.401$) may lead to $50\%$ more staffing! And the difference seems to enlarge with the increase of $\lambda$. 

In addition, we can observe in Table \ref{tab: sqrt rule} that, as $\lambda$ increases, the ratio $\frac{\mu^*_H-\lambda}{\sqrt{\lambda}}$ increases dramatically in Hawkes queue, while for $GI/GI/1$ queue, the ratio $\frac{\mu_{GI}^*-\lambda}{\sqrt{\lambda}}$ is around the same level of 0.35. In other words, this result confirms that the square-root rule works for $GI/GI/1$ queue but not for the Hawkes$/GI/1$ queue especially when the increasing of average arrival rate is driven by the self-excitement effect, i.e. $m\to 1$. We leave theoretic justification on this result for further study.

\begin{table}[]
	\centering
	\begin{tabular}{lllllllll}
		\multicolumn{1}{c}{\multirow{2}{*}{$\lambda$}} & \multicolumn{1}{c}{\multirow{2}{*}{$m$}} & \multicolumn{1}{c}{\multirow{2}{*}{ACF}} & \multicolumn{3}{c}{Hawkes/$GI/1$}                                   & \multicolumn{3}{c}{$GI/GI/1$}                                              \\
		\cline{4-9}
		\multicolumn{1}{c}{}                           & \multicolumn{1}{c}{}   &\multicolumn{1}{c}{}                    & $\mu^*_{H}$ & $\rho^*_H$ & $\frac{\mu^*_H-\lambda}{\sqrt{\lambda}}$ & $\mu^*_{GI}$ & $\rho^*_{GI}$ & $\frac{\mu^*_{GI}-\lambda}{\sqrt{\lambda}}$ \\
		\hline
		5                                              & 0.800                                    & 0.235& 6.16        & 0.812      & 0.52                                     & 5.78         & 0.865         & 0.35                                        \\
		25                                             & 0.960                                    & 0.331& 33.6        & 0.744      & 1.72                                     & 26.7         & 0.936         & 0.34                                        \\
		45                                             & 0.978                                    & 0.349& 63.4        & 0.709      & 2.74                                     & 47.2         & 0.953         & 0.33                                        \\
		65                                             & 0.985                                    & 0.354& 95.7        & 0.680      & 3.81                                     & 67.8         & 0.959         & 0.35                                        \\
		85                                             & 0.988                                    & 0.362& 131.2       & 0.648      & 5.01                                     & 88.4         & 0.962         & 0.37                                        \\
		105                                            & 0.991                                    & 0.401& 166.4       & 0.631      & 5.99                                     & 109        & 0.967         & 0.35                                       
	\end{tabular}
	\caption{The estimated $\mu^*$ for Hawkes$/M/1$ queues and $GI/M/1$ queues with different $\lambda$.}
	\label{tab: sqrt rule}
\end{table}


\section{Conclusion}\label{sec: conclusion}
In this paper, we establish moment bounds and exponential ergodicity for the general Hawkes$/GI/1$ queues under the stability condition. The key challenge in our analysis lies in the fact that the initial state of the Hawkes system contains the information of future arrivals and hence, the conventional approach is not applicable. To remedy this issue, we develop a semi-synchronous coupling method decomposing the future arrivals into two parts and we deal with them in different manners, which, we believe, is of independent research interest and can be useful for other stochastic models related to Hawkes processes. As an application of the theoretic results, we develop an efficient numerical tool for solving the optimal staffing problems in Hawkes$/GI/1$ queue. Our numerical results document the significant impact of clustering or auto-correlation in the arrival processes on decision-making of staffing service systems.  

There are several venues for future research. Since we show numerically that the square-root rule is not always satisfactory for Hawkes queues, one natural direction is to theoretically verify the numerical finding and identify the correct asymptotic staffing rule. Another interesting dimension is to investigate Hakwes queues with long-tail excitation function and possibly heavy tailed service times.
\newpage
%
%
%






\bibliographystyle{chicago}
\bibliography{reference}

\begin{thebibliography}{}

\bibitem[\protect\citeauthoryear{Abergel and Jedidi}{Abergel and
  Jedidi}{2015}]{abergel2015long}
Abergel, F. and A.~Jedidi (2015).
\newblock Long-time behavior of a hawkes process--based limit order book.
\newblock {\em SIAM Journal on Financial Mathematics\/}~{\em 6\/}(1),
  1026--1043.

\bibitem[\protect\citeauthoryear{Asmussen}{Asmussen}{2003}]{asmussen2003applied}
Asmussen, S. (2003).
\newblock {\em Applied probability and queues}, Volume~2.
\newblock New York: Springer.

\bibitem[\protect\citeauthoryear{Banerjee and Budhiraja}{Banerjee and
  Budhiraja}{2020}]{BanerjeeBudhiraja2020}
Banerjee, S. and A.~Budhiraja (2020).
\newblock {Parameter and dimension dependence of convergence rates to
  stationarity for reflecting Brownian motions}.
\newblock {\em The Annals of Applied Probability\/}~{\em 30\/}(5), 2005 --
  2029.

\bibitem[\protect\citeauthoryear{Bertozzi, Franco, Mohler, Short, and
  Sledge}{Bertozzi et~al.}{2020}]{bertozzi2020challenges}
Bertozzi, A.~L., E.~Franco, G.~Mohler, M.~B. Short, and D.~Sledge (2020).
\newblock The challenges of modeling and forecasting the spread of covid-19.
\newblock {\em Proceedings of the National Academy of Sciences\/}~{\em
  117\/}(29), 16732--16738.

\bibitem[\protect\citeauthoryear{Blanchet and Chen}{Blanchet and
  Chen}{2020}]{blanchet2020rates}
Blanchet, J. and X.~Chen (2020).
\newblock Rates of convergence to stationarity for reflected brownian motion.
\newblock {\em Mathematics of Operations Research\/}~{\em 45\/}(2), 660--681.

\bibitem[\protect\citeauthoryear{Br{\'e}maud, Nappo, and Torrisi}{Br{\'e}maud
  et~al.}{2002}]{bremaud2002rate}
Br{\'e}maud, P., G.~Nappo, and G.~L. Torrisi (2002).
\newblock Rate of convergence to equilibrium of marked {H}awkes processes.
\newblock {\em Journal of Applied Probability\/}~{\em 39\/}(1), 123--136.

\bibitem[\protect\citeauthoryear{Chen}{Chen}{2021}]{perfect2020}
Chen, X. (2021).
\newblock Perfect sampling of {H}awkes processes and queues with {H}awkes
  arrivals.
\newblock {\em Stochastic Systems\/}~{\em 11\/}(13), 264--283.

\bibitem[\protect\citeauthoryear{Chen, Liu, and Hong}{Chen
  et~al.}{2023a}]{chen2023online}
Chen, X., Y.~Liu, and G.~Hong (2023a).
\newblock Online learning and optimization for queues with unknown demand curve
  and service distribution.
\newblock {\em arXiv preprint arXiv:2303.03399\/}.

\bibitem[\protect\citeauthoryear{Chen, Liu, and Hong}{Chen
  et~al.}{2023b}]{onlinequeue2020}
Chen, X., Y.~Liu, and G.~Hong (2023b).
\newblock An online learning approach to dynamic pricing and capacity sizing in
  service systems.
\newblock {\em Operations Research\/}~{\em Forthcoming}.

\bibitem[\protect\citeauthoryear{Daigle}{Daigle}{2005}]{daigle2005queueing}
Daigle, J.~N. (2005).
\newblock {\em Queueing theory with applications to packet telecommunication}.
\newblock Springer Science \& Business Media.

\bibitem[\protect\citeauthoryear{Daw, Hampshire, and Pender}{Daw
  et~al.}{2019}]{daw2019staff}
Daw, A., R.~C. Hampshire, and J.~Pender (2019).
\newblock How to staff when customers arrive in batches.
\newblock {\em arXiv preprint arXiv:1907.12650\/}.

\bibitem[\protect\citeauthoryear{Daw and Pender}{Daw and
  Pender}{2018}]{daw2018queues}
Daw, A. and J.~Pender (2018).
\newblock Queues driven by hawkes processes.
\newblock {\em Stochastic Systems\/}~{\em 8\/}(3), 192--229.

\bibitem[\protect\citeauthoryear{Down, Meyn, and Tweedie}{Down
  et~al.}{1995}]{MeynTweedie95Ergo}
Down, D., S.~P. Meyn, and R.~L. Tweedie (1995).
\newblock Exponential and uniform ergodicity for markov processes.
\newblock {\em The Annals of Probability\/}~{\em 23\/}(4), 1671--1691.

\bibitem[\protect\citeauthoryear{Dwass}{Dwass}{1969}]{dwass1969total}
Dwass, M. (1969).
\newblock The total progeny in a branching process and a related random walk.
\newblock {\em Journal of Applied Probability\/}~{\em 6\/}(3), 682--686.

\bibitem[\protect\citeauthoryear{Gao and Zhu}{Gao and
  Zhu}{2018}]{gao2018functional}
Gao, X. and L.~Zhu (2018).
\newblock Functional central limit theorems for stationary hawkes processes and
  application to infinite-server queues.
\newblock {\em Queueing Systems\/}~{\em 90}, 161--206.

\bibitem[\protect\citeauthoryear{Garnett, Mandelbaum, and Reiman}{Garnett
  et~al.}{2002}]{garnett2002designing}
Garnett, O., A.~Mandelbaum, and M.~Reiman (2002).
\newblock Designing a call center with impatient customers.
\newblock {\em Manufacturing \& Service Operations Management\/}~{\em 4\/}(3),
  208--227.

\bibitem[\protect\citeauthoryear{Hawkes}{Hawkes}{1971a}]{hawkes1971point}
Hawkes, A.~G. (1971a).
\newblock Point spectra of some mutually exciting point processes.
\newblock {\em Journal of the Royal Statistical Society Series B: Statistical
  Methodology\/}~{\em 33\/}(3), 438--443.

\bibitem[\protect\citeauthoryear{Hawkes}{Hawkes}{1971b}]{hawkes1971spectra}
Hawkes, A.~G. (1971b).
\newblock Spectra of some self-exciting and mutually exciting point processes.
\newblock {\em Biometrika\/}~{\em 58\/}(1), 83--90.

\bibitem[\protect\citeauthoryear{Hawkes and Oakes}{Hawkes and
  Oakes}{1974}]{hawkes1974cluster}
Hawkes, A.~G. and D.~Oakes (1974).
\newblock A cluster process representation of a self-exciting process.
\newblock {\em Journal of Applied Probability\/}~{\em 11\/}(3), 493--503.

\bibitem[\protect\citeauthoryear{Hazan et~al.}{Hazan
  et~al.}{2016}]{hazan2016introduction}
Hazan, E. et~al. (2016).
\newblock Introduction to online convex optimization.
\newblock {\em Foundations and Trends{\textregistered} in Optimization\/}~{\em
  2\/}(3-4), 157--325.

\bibitem[\protect\citeauthoryear{Heemskerk, Mandjes, and Mathijsen}{Heemskerk
  et~al.}{2022}]{heemskerk2022staffing}
Heemskerk, M., M.~Mandjes, and B.~Mathijsen (2022).
\newblock Staffing for many-server systems facing non-standard arrival
  processes.
\newblock {\em European Journal of Operational Research\/}~{\em 296\/}(3),
  900--913.

\bibitem[\protect\citeauthoryear{Heemskerk, van Leeuwaarden, and
  Mandjes}{Heemskerk et~al.}{2017}]{heemskerk2017scaling}
Heemskerk, M., J.~van Leeuwaarden, and M.~Mandjes (2017).
\newblock Scaling limits for infinite-server systems in a random environment.
\newblock {\em Stochastic Systems\/}~{\em 7\/}(1), 1--31.

\bibitem[\protect\citeauthoryear{Koops, Saxena, Boxma, and Mandjes}{Koops
  et~al.}{2018}]{koops2018infinite}
Koops, D.~T., M.~Saxena, O.~J. Boxma, and M.~Mandjes (2018).
\newblock Infinite-server queues with {H}awkes input.
\newblock {\em Journal of Applied Probability\/}~{\em 55\/}(3), 920--943.

\bibitem[\protect\citeauthoryear{Kushner and Yin}{Kushner and
  Yin}{2003}]{kushner2003stochastic}
Kushner, H. and G.~G. Yin (2003).
\newblock {\em Stochastic approximation and recursive algorithms and
  applications}, Volume~35.
\newblock New York: Springer Science \& Business Media.

\bibitem[\protect\citeauthoryear{Lee and Ward}{Lee and
  Ward}{2014}]{LeeWard2014}
Lee, C. and A.~R. Ward (2014).
\newblock Optimal pricing and capacity sizing for the {GI/GI/1} queue.
\newblock {\em Operations Research Letters\/}~{\em 42}, 527--531.

\bibitem[\protect\citeauthoryear{Lee and Ward}{Lee and
  Ward}{2018}]{LeeWard2019}
Lee, C. and A.~R. Ward (2018).
\newblock Pricing and capacity sizing of a service facility: {C}ustomer
  abandonment effects.
\newblock {\em Production and Operations Management\/}~{\em 28}, 2031--2043.

\bibitem[\protect\citeauthoryear{Loynes}{Loynes}{1962}]{loynes1962}
Loynes, R. (1962).
\newblock The stability of a queue with non-independent inter-arrival and
  service times.
\newblock {\em Mathematical Proceedings of the Cambridge Philosophical
  Society\/}~{\em 58\/}(3), 497--520.

\bibitem[\protect\citeauthoryear{Maglaras and Zeevi}{Maglaras and
  Zeevi}{2003}]{Maglaras2003}
Maglaras, C. and A.~Zeevi (2003).
\newblock Pricing and capacity sizing for systems with shared resources:
  {A}pproximate solutions and scaling relations.
\newblock {\em Management Science\/}~{\em 49\/}(8), 1018--1038.

\bibitem[\protect\citeauthoryear{Mood, Graybill, and Boes}{Mood
  et~al.}{1974}]{MGFbook}
Mood, A., F.~Graybill, and D.~Boes (1974).
\newblock Introduction to the theory of statistics.

\bibitem[\protect\citeauthoryear{Nair, Wierman, and Zwart}{Nair
  et~al.}{2016}]{Nair2016}
Nair, J., A.~Wierman, and B.~Zwart (2016).
\newblock Provisioning of large-scale systems: The interplay between network
  effects and strategic behavior in the user base.
\newblock {\em Management Science\/}~{\em 62\/}(6), 1830--1841.

\bibitem[\protect\citeauthoryear{Nakayama, Shahabuddin, and Sigman}{Nakayama
  et~al.}{2004}]{nakayama2004finite}
Nakayama, M.~K., P.~Shahabuddin, and K.~Sigman (2004).
\newblock On finite exponential moments for branching processes and busy
  periods for queues.
\newblock {\em Journal of Applied Probability\/}~{\em 41\/}(A), 273--280.

\bibitem[\protect\citeauthoryear{Sun and Liu}{Sun and
  Liu}{2021}]{sun2021staffing}
Sun, X. and Y.~Liu (2021).
\newblock Staffing many-server queues with autoregressive inputs.
\newblock {\em Naval Research Logistics (NRL)\/}~{\em 68\/}(3), 312--326.

\bibitem[\protect\citeauthoryear{Whitt}{Whitt}{2004}]{whitt2004efficiency}
Whitt, W. (2004).
\newblock Efficiency-driven heavy-traffic approximations for many-server queues
  with abandonments.
\newblock {\em Management Science\/}~{\em 50\/}(10), 1449--1461.

\bibitem[\protect\citeauthoryear{Wolff}{Wolff}{1989}]{wolff89}
Wolff, R.~W. (1989).
\newblock {\em Stochastic Modeling and the Theory of Queues\/} (1 ed.),
  Volume~1.
\newblock Pearson.

\bibitem[\protect\citeauthoryear{Zeltyn and Mandelbaum}{Zeltyn and
  Mandelbaum}{2005}]{zeltyn2005call}
Zeltyn, S. and A.~Mandelbaum (2005).
\newblock Call centers with impatient customers: Many-server asymptotics of the
  m/m/n+ g queue.
\newblock {\em Queueing Systems\/}~{\em 51\/}(3-4), 361--402.

\bibitem[\protect\citeauthoryear{Zhang, Hong, and Zhang}{Zhang
  et~al.}{2014}]{zhang2014scaling}
Zhang, X., L.~J. Hong, and J.~Zhang (2014).
\newblock Scaling and modeling of call center arrivals.
\newblock In {\em Proceedings of the Winter Simulation Conference 2014}, pp.\
  476--485. IEEE.

\bibitem[\protect\citeauthoryear{Zhao, Erdogdu, He, Rajaraman, and
  Leskovec}{Zhao et~al.}{2015}]{zhao2015seismic}
Zhao, Q., M.~A. Erdogdu, H.~Y. He, A.~Rajaraman, and J.~Leskovec (2015).
\newblock Seismic: A self-exciting point process model for predicting tweet
  popularity.
\newblock In {\em Proceedings of the 21th ACM SIGKDD international conference
  on knowledge discovery and data mining}, pp.\  1513--1522.

\end{thebibliography}
\newpage

\begin{center}
	{\large\bf APPENDIX}
\end{center}
	\section{Proofs}
	In this section, we give the proofs of auxiliary results in the main paper.
	\subsection{Proof of Auxiliary Results in Section \ref{sec: preliminaries}}
	\begin{proof}{Proof of Lemma \ref{lmm: light-tail workload assumption}}
		Since $0<m<1$ as assumed, we have $m-1-\log(m)>0$. As a result, there exists $\theta_0$ such that $0<\log \EE[\exp(\theta_0 b)]<m-1-\log(m)$. Note that if moment generating function exists around $0$, then it's continuous differentiable in its domain \cite[p.78]{MGFbook}.
		Therefore, for \eqref{eq: theta1 def}, since $\psi_{V}(\cdot)$ is smooth around $0$, we have $\psi_{S_1}(\theta)=\psi_{\vert C_1\vert}(\log \psi_V(\theta))$ is also smooth around $0$. In this case, we can find a $\theta_1$ satisfying \eqref{eq: theta1 def} as follows. Since $\psi_V(\cdot)$ is smooth around $0$, then there exists a $\theta_0'$ such that $\psi_V(\theta_0')<\psi_b(\theta_0/2)$. For the other condition, define an auxiliary function 
		$$h(\theta)=\quad\frac{\lambda_0}{\underline{\mu}-{\varepsilon}}(\psi_{S_1}(\theta)-1)-\theta.$$
		By direct calculation, $h(0)=0,h'(0)=\frac{\lambda_0}{(1-m)(\underline{\mu}-{\varepsilon})}<0$ and $h''(\theta)=\frac{\lambda_0}{\underline{\mu}-{\varepsilon}}\psi_{S_1}''(\theta)=\frac{\lambda_0}{\underline{\mu}-{\varepsilon}}\EE[S_1^2e^{\theta S_1}]>\frac{\lambda_0}{\underline{\mu}-{\varepsilon}}\EE[S^2]>0$ for any $\theta>0$. Therefore, there exists one and only one positive solution for $h(\bar{\theta})=0$, and  denote it by $\bar{\theta}$. Clearly, $h(\theta)<0$ for $\theta\in(0,\bar{\theta})$. Then, we can choose 
		$$\theta_1=\min(\theta_0',\bar{\theta}/2),$$
		which satisfies \eqref{eq: theta1 def}. 
	\end{proof}
	\subsection{Proofs of the Auxiliary Result in Section \ref{subsec: CFTP construction} and \ref{subsec: mG1} }
	\begin{proof}{Proof of Proposition \ref{prop: stationary workload}}Under Assumption \ref{assmpt: stable}, for any $t\geq 0$, the process $R^\leftarrow(t,s)\to-\infty$ almost surely as $s\to\infty$. As a consequence, $\tilde{W}(t)$ is well-defined. In addition, for any $t$, the process $R^{\leftarrow}(t,\cdot)$ follow the same distribution as $R^{\leftarrow}(0,\cdot)$. So  $\tilde{W}(t)$ has the same distribution as $\tilde{W}(0)$ for all $t\geq 0$, and hence is stationary. 
		
		Now we show that $\tilde{W}(\cdot)$ follows the dynamic of a workload process of a Hawkes/GI/1 queue. Note that  $\tilde{W}(0)$ is the initial workload, customers arrive according to $N(t)$ with i.i.d. job size $V_n$, and service rate is $\mu$. For any $t\geq 0$ and $s>t$, 
		\begin{equation*}
			\begin{aligned}
				R^{\leftarrow}(t, s) &= 
				\sum_{n= N(t-s)}^{N(t)} V_n - \mu s\\
				&= \sum_{n= N(t-s)}^{-1} V_n- \mu (s-t) +\sum_{n=1}^{N(t)}V_n -\mu  t\\
				&= R^{\leftarrow}(0,s-t) +R(t).
			\end{aligned}
		\end{equation*}
		As a consequence, 
		$$\max_{s\geq t}R^{\leftarrow}(t, s) = \max_{s\geq 0} R^{\leftarrow}(0,s) + R(t) = \tilde{W}(0) + R(t).$$
		Recall that $R(t)$ is defined in \eqref{eq: R}. For $0\leq s\leq t$, we have
		$$R^{\leftarrow}(t,s) = R(t) - R(0,t-s).$$
		Then,
		\begin{equation*}
			\begin{aligned}
				\tilde{W}(t) &= \max_{s\geq 0}R^{\leftarrow}(t,s) = \left(\tilde{W}(0) + R(t)\right)\vee \max_{0\leq s\leq t} (R(t) - R(s))\\
				&= R(t)  - (-\tilde{W}(0))\wedge \min_{0\leq s\leq t}(R(s))\\
				&= \tilde{W}(0) + R(t)  -0\wedge \min_{0\leq s\leq t} \left\{\tilde{W}(0)+R(s)\right\}.
			\end{aligned}
		\end{equation*}
		According to \eqref{eq: workload dynamics}, we can conclude that $\tilde{W}(t)$ has the same dynamics as the of a Hawkes/GI/1 queue in which customers arrive according to the stationary Hawkes process $N(t)$ with i.i.d. job sizes $V_n$ and are served with rate $\mu$. 
	\end{proof}
	
	\begin{proof}{Proof of Proposition \ref{lmm: stationary X}}
		For convenience, define $t_0=t-\tilde{X}(t)$. Then, 
		\begin{align*}
			\tilde{W}(t_0)&=\max_{s\geq 0} R^\leftarrow(t_0,s)\\
			&=\max_{s\geq 0} R^\leftarrow(t,t-t_0+s)-R^\leftarrow(t,t-t_0)\\
			&=\max_{s\geq 0} R^\leftarrow(t,t-t_0+s)-R^\leftarrow(t,\tilde{X}(t))\leq  0.
		\end{align*}
		Since $\tilde{W}(t)\geq 0$, we have $\tilde{W}(t_0)=0$.
		Similarly, if there is a $t_1$ such that $t_0<t_1\leq t$, and $\tilde{W}(t_1)=0$, then 
		$$R^\leftarrow(t,t-t_1)\geq R^\leftarrow(t,t-t_0),$$
		which is impossible by definition of $\tilde{X}(t)$ and $t_0$. So $t_0=\sup\{s\leq t:\tilde{W}(s)=0\}$. This finishes the proof. 
	\end{proof}
	
	\begin{proof}{Proof of Proposition \ref{prop: mG1}}
		By definition, 
		\begin{align*}
			R^\leftarrow(t,s)&=
			\sum_{n= N(t-s)}^{N(t)} V_n - \mu s\\
			&=\sum_{t-s\leq \delta_l<t} \sum_{k=1}^{\vert C_l\vert}V_l^k\ind{t_l^k\geq t-s}+\sum_{t_l^1<t\leq \delta_l}\sum_{k=1}^{\vert C_l\vert}V_l^k \ind{t_l^k\leq t}-\mu s\\
			&\leq \sum_{t-s\leq \delta_l<t} \sum_{k=1}^{\vert C_l\vert}V_l^k-\mu s+\sum_{ t_l^1<t\leq \delta_l}\sum_{k=1}^{\vert C_l\vert}V_l^k \ind{t_l^k\leq t}\\
			&=\hat{R}^\leftarrow(t,s)+J_0^\leftarrow(t).
		\end{align*}
		Taking the maximum over $s\geq 0$ on both sides, we have the result of (a).
		
		For statement $(b)$, note that $\hat{W}(t)$ only depends on clusters that have departed before time $t$, i.e., $\delta_l<t$, while $J_0^{\leftarrow}(t)$ only involves clusters that will depart after time $t$. Given that clusters are independent of each other, we can conclude that $\hat{W}(t)$ and $J_0^\leftarrow(t)$ are independent of each other for any given $t$.
		
	\end{proof}
	\subsection{Proofs of Auxiliary Result in Section \ref{subsec: workload moments}}
	\begin{proof}{Proof of Proposition \ref{prop: J0}}
		By definition, $J_0(0)$ and $J_0^\leftarrow(0)$ are bounded by the sum of job sizes of all customers in $N_0(0)$.  For each cluster $C_l$, we define its total birth time as
		$$TB_l=\sum_{k=1}^{\vert C_l\vert}b_l^k.$$ 
		In addition, define two sets
		\begin{align*}
			C_{B_0}&=\left\{C_l:t_l^1<0,TB_l\geq-t_l^1\right\},\\
			B_0&=\left\{(k,t_l^k,p_l^k,V_l^k): (k,t_l^k,p_l^k,V_l^k)\in C_l, s.t.~ t_l^1<0, TB_l\geq -t_l^1\right\},
		\end{align*}
		i.e., a set of clusters whose total birth time is larger than its age at time 0 and a set of events from these clusters.  As the total birth time of a cluster is larger than the cluster length $\delta_l-t_l^1$, $N_0(0)$ must be a subset of $B_0$. 
		Therefore, we have 
		$$J_0(0),J_0^\leftarrow(0)\leq \sum_{(k,t_l^k,p_l^k,V_l^k)\in N_0(0)}V_l^k\leq  \sum_{(k,t_l^k,p_l^k,V_l^k)\in B_0}V_l^k\equiv \bar{J}_0.$$
		So, to prove that statements (a) and (b) hold for $J_0(0)$ and $J_0^\leftarrow(0)$, it suffices to show that the two statements are true for $\bar{J}_0$.
		
		Let $N_{B_0}\equiv\vert C_{B_0}\vert$ be the number of clusters whose total birth time is larger than its age at time 0.  
		Note that $TB_l$ are i.i.d. By Poisson thinning theorem, $N_{B_0}$ is a Poisson random variable with mean
		$$\EE[N_{B_0}]=\int_{-\infty}^{0}\lambda_0 \PP(TB>-t)dt=\lambda_0\EE[TB].$$
		In the following proof, we just index the clusters in $C_{B_0}$ as $1,2,...,N_{B_0}$ for the simplicity of notation.
		
		Next, we shall prove the two statements for $\bar{J}_0$ one by one.
		
		\begin{enumerate}
			\item[(a)] For any $n\geq 1$, 
			\begin{align*}
				\EE\left[\bar{J}_0^n\right]&=\EE\left[\left(\sum_{l=1}^{N_{B_0}}S_l\right)^n\right]
				\leq \EE\left[N_{B_0}^{n-1}\sum_{l=1}^{N_{B_0}}S_l^n\right]\\
				&=\EE\left[N_{B_0}^n\right]\EE\left[S_l^n\mid  C_l\in C_{B_0}\right],
			\end{align*}
			where the last equality holds as the clusters in $C_{B_0}$ are i.i.d conditional on $N_{B_0}$ by Poisson thinning. Moreover, conditional on $\vert C_l\vert$, the event $\{TB_l>-t_l^1\}$ is independent of the job sizes $V_l^k$ for $k=1,2,...,\vert C_l\vert$. Then we have
			\begin{align*}
				\EE\left[S_l^n\mid  C_l\in C_{B_0}\right]&=\EE\left[\left(\sum_{k=1}^{\vert C_l\vert}V_l^k\right)^n\Big\vert- t_l^1<TB_l\right]
				\leq \EE[V_1^n]\EE\left[\vert C_l\vert^n\mid -t_l^1<TB_l\right].
			\end{align*}
			By Poisson thinning, conditional on $C_l\in C_{B_0}$, the arrival times of each cluster $t_l^1$ has probability density function $p(-t)=\PP(TB_l>t)/\EE[TB_l]$ for $t\geq 0$. So, 
			\begin{align*}
				&\EE\left[\vert C_l\vert^n\mid -t_l^1<TB_l\right]=\int_{0}^\infty\EE[\vert C_l\vert^n\mid TB_l>t]p(-t)dt\\
				\leq & \int_{1}^{\infty}\EE[\vert C_l\vert^n TB^{n+1}\vert TB>t]t^{-n-1}p(-t)dt
				+\int_{0}^{1}\EE[\vert C_l\vert ^n]\PP(TB>t)^{-1}p(-t)dt\\
				\leq &~\EE[\vert C_l\vert^n TB^{n+1}]\int_{1}^{\infty}t^{-n-1}\PP(TB>t)^{-1}p(-t)dt
				+\EE[\vert C_l\vert ^n]/\EE[TB]\\
				=&\left(n^{-1}\EE[\vert C_l\vert^n TB_l^{n+1}]+\EE[\vert C_l\vert^n ]\right)/\EE[TB]\\
				=&\left(n^{-1}\EE[\vert C_l\vert^{2n}]^{1/2}\EE[ TB^{2n+2}]^{1/2}+\EE[\vert C_l\vert^n ]\right)/\EE[TB].
			\end{align*} 
			Note that $\vert C_l\vert$ is the total number of arrivals in a cluster generated by a Poisson branching process with $m<1$, it is known that $\vert C_l\vert$ follows Borel distribution (see, for example, \cite{dwass1969total}). Therefore, $\vert C_l\vert$ has finite $n$-th moments for all $n\geq 1$. On the other hand, $TB_l$ is a compound random variable and equals the sum of $\vert C_l\vert$ i.i.d. copies of birth time $b$.  By Assumption \ref{assmpt: light tail b}, $TB$ also has  finite $n$-th moment for all $n\geq 1$  (see Prop.3 of \cite{perfect2020}). So $\EE\left[\vert C_l\vert^n\mid -t_l^1<TB_l\right]<\infty$  and we can conclude that
			$$\EE[\bar{J}_0^n]\leq \EE[N_{B_0}^n]\EE[V^n]\EE[\vert C_l\vert^n \mid -t_l^1<TB_l]<\infty.$$
			
			\item[(b)] By Lemma \ref{lmm: light-tail workload assumption},
			$\psi_{V}(\theta_1)<\psi_b(\theta_0/2),$ then
			\begin{align*}
				\EE\left[\exp(\theta_1 \bar{J}_0)\right]&= \EE\left[\exp\left(\theta_1 \sum_{l=1}^{N_{B_0}}S_l\right)\right]
				=\EE\left[\EE\left[\exp\left(\theta_1 \sum_{k=1}^{\vert C_l\vert}V_l^k\right)\Bigg\vert TB_l>-t_l^1 \right]^{ N_{B_0}}\right]\\
				&=\EE\left[\EE\left[\exp\left(\vert C_l\vert \log (\psi_V(\theta_1))\right)\Bigg\vert TB_l>-t_l^1 \right]^{ N_{B_0}}\right]\\
				&\leq \EE\left[\EE\left[\exp\left(\vert C_l\vert \log (\psi_b(\theta_0/2))\right)\Bigg\vert TB_l>-t_l^1 \right]^{ N_{B_0}}\right].
			\end{align*}
			Note that conditional on $TB_l>-t_l^1$, the birth times $b_i$ tend to be larger compared to the unconditional distribution of $b_i$ intuitively. Therefore, we next prove that 
			$$\EE\left[\exp\left(\vert C_l\vert \log (\psi_b(\theta_0/2))\right)\Bigg\vert TB_l>-t_l^1 \right]\leq\EE\left[\exp\left(\frac{\theta_0}{2}TB_l\right)\Bigg\vert TB_l>-t_l^1\right],$$  
			and then give a bound on $\EE\left[\exp\left(\frac{\theta_0}{2}TB_l\right)\mid TB_l>-t_l^1\right]$ using Markov inequality. We show this claim by a stochastic ordering approach. By direct calculation, for any $t,s>0$ and $n>0$,
			\begin{align*}
				\PP(TB_l>s\big\vert \vert C_l\vert=n)&\leq \PP(TB_l>s\big\vert \vert C_l\vert=n, TB_l>t)\\
				&=
				\begin{cases}
					1,\quad \text{if }s<t,\\
					\frac{\PP(TB_l>s\big\vert \vert C_l\vert=n)}{\PP(TB_l>t\big\vert \vert C_l\vert=n)},\quad \text{if }s\geq t.
				\end{cases}
			\end{align*}
			Therefore, we have the stochastic orders
			\begin{align*}
				\left(TB_l\big \vert \vert C_l\vert=n\right)&\stackrel{st}{\leq}\left(TB_l\big \vert \vert C_l\vert=n, TB_l>t\right),\\
				\left(\exp\left(\frac{\theta_0}{2}TB_l\right)\Big\vert \vert C_l\vert=n\right)&\stackrel{st}{\leq}\left(\exp\left(\frac{\theta_0}{2}TB_l\right)\Big\vert \vert C_l\vert=n,TB_l>t\right).
				\label{eq: TB sto domin}
			\end{align*}
			Consequently, 
			$$\EE\left[\exp\left(\frac{\theta_0}{2}TB_l\right)\Bigg\vert \vert C_l\vert=n\right]\leq \EE\left[\exp\left(\frac{\theta_0}{2}TB_l\right)\Bigg\vert \vert C_l\vert=n,TB_l>t\right].$$
			As a result, we have
			\begin{align*}
				&\EE\left[\exp\left(\vert C_l\vert \log (\psi_b(\theta_0/2))\right)\Big\vert TB_l>-t_l^1 \right]\\
				=&\int_{0}^{\infty}\EE\left[\EE\left[\exp\left(\frac{\theta_0}{2}TB_l\right)\Big\vert \vert C_l\vert\right]\Bigg\vert TB_l>t\right]p(-t)dt \\
				=&\int_{0}^{\infty}\sum_{n=1}^{\infty}\EE\left[\exp\left(\frac{\theta_0}{2}TB_l\right)\Big\vert \vert C_l\vert=n \right]\PP(\vert C_l\vert =n\big \vert TB_l>t)p(-t)dt \\
				\leq&\int_{0}^{\infty}\sum_{n=1}^{\infty}\EE\left[\exp\left(\frac{\theta_0}{2}TB_l\right)\Big\vert \vert C_l\vert=n,TB_l>t \right]\PP(\vert C_l\vert =n\big \vert TB_l>t)p(-t)dt\\
				=&\int_0^\infty\EE\left[\exp\left(\frac{\theta_0}{2} TB_l\right)\Big\vert TB_l>t\right]p(-t)dt\\
				= &  \int_0^\infty\EE\left[\exp\left(\frac{\theta_0}{2} TB_l\right)\ind{TB_l>t}\right]\PP(TB_l>t)^{-1}p(-t)dt\\
				\leq& \int_0^\infty\EE[\exp(\theta_0 TB_l)]e^{-\theta_0t/2}\PP(TB_l>t)^{-1}\frac{\PP(TB_l>t)}{\EE[TB_l]}dt
				=\frac{2\EE[\exp(\theta_0 TB_l)]}{\theta_0\EE[TB_l]}.
			\end{align*}
			As a consequence, we can give an upper bound on $\EE[\exp(\theta_0' \bar{J}_0)]$. Recall that $N_{B_0}$ is a Poisson random variable with mean $\lambda_0\EE[TB_l]$, and let's denote the m.g.f of $N_{B_0}$ by $\psi_{N_{B_0}}(\cdot)$. By Poisson thinning, we have
			\begin{align*}
				\EE[\exp(\theta_1 \bar{J}_0)]&\leq \psi_{N_{B_0}}\left(\log \EE\left[\exp\left(\vert C_l\vert \log(\psi_b(\theta_0/2))\right)\Big\vert TB_l>-t_l^1\right]\right)\\
				&\leq \psi_{N_{B_0}} \left(\log\left( \frac{2\EE[\exp(\theta_0TB_l)]}{\theta_0\EE[TB_l]}\right)\right)\\
				&= \exp\left(\frac{2\lambda_0\EE[\exp(\theta_0TB_l)]}{\theta_0}-\lambda_0 \EE[TB_l]\right)\\
			\end{align*}
			Therefore, $\EE[\exp(\theta_1 \bar{J}_0)]<\infty$ as long as $\EE[\exp(\theta_0 TB_l)]<\infty$. Denote the m.g.f of $\vert C_l\vert$ by $\psi_{\vert C_l\vert}$, then
			\begin{align*}
				\EE\left[\exp(\theta_0TB_l)\right]&=\EE\left[\exp\left(\theta_0\sum_{k=1}^{\vert C_l\vert}b_l^k\right)\right]\\
				&=\psi_{\vert C_l\vert}\left(\log \psi_b(\theta_0)\right).
			\end{align*}
			It's known that $\vert C_l\vert$ follows Borel distribution with parameter $m$,
			i.e., $$\PP(\vert C_l\vert = k)=\frac{(mk)^{k-1}e^{-mk}}{k!}$$
			(see \cite{dwass1969total}). As $\log \psi_b(\theta_0)<m-1-\log m$, the moment generating function $\psi_{\vert C_l\vert}(\log \psi_b(\theta_0))<\infty$ by direct calculation.
			As a result, we have 
			\begin{equation}\label{eq: TB bound}
				\EE[\exp(\theta_0 TB_l)]<\infty,\quad \text{and}\quad \EE[\exp(\theta_1 \bar{J}_0)]<\EE\left[\exp\left(\frac{\theta_0}{2}\sum_{l=1}^{N_{B_0}}TB_l\right)\right]<\infty.
			\end{equation}
			This closes our proof.
		\end{enumerate}
	\end{proof}

	\subsection{Proofs of Auxiliary Results in Section \ref{sec: ergodicity}}\label{sec: proof ergo}
	\begin{proof}{Proof of Lemma \ref{lmm: sychronous coupling1}}
		We have 
		$$\vert W^1(t)-W^2(t)\vert=\max\left((W^1(t)-W^2(t))^+,(W^2(t)-W^1(t))^+\right),$$
		where $(x)^+=\max(x,0)$. First, we prove that 
		$$(W^1(t)-W^2(t))^+\leq (W^1(0)-W^2(0))^++J_0^1.$$
		Without loss of generality, for any $t>0$, suppose there are $k_t$ jobs in $J_0^1$ coming at time $0<t_1<\cdots<t_{k_t}<t$ and the corresponding job sizes are $V_i$ for $i=1,\cdots,k_t$. 
		
		We claim that $(W^1(t)-W^2(t))^+$ is monotonically non-increasing between arrival intervals $(0,t_1),(t_1,t_2),\cdots,(t_{k_t-1},t_{k_t}),(t_{k_t},t)$. Take $(t_{k_t},t)$ as an example, and there is no arrival from $J_0^1$ during $(t_{k_t},t)$. So there are only two kinds of arrivals, i.e., the common arrivals and arrivals from $J_0^2$. For common arrivals, since both $W^1(t)$ and $W^2(t)$ jump at them together, they have no influence on the difference $W^1(t)-W^2(t)$. On the other hand, for arrivals from $J_0^2$, $(W^1(t)-W^2(t))^+$ decreases at these jumps. So we can see that $(W^1(t)-W^2(t))^+$ is monotonically non-increasing in $(t_k,t)$. Thus, for $t>t_k$,
		$$(W^1(t)-W^2(t))^+\leq (W^1(t_k)-W^2(t_k))^+.$$
		Following this argument, we have, for $t>t_k$,
		\begin{align*}
			(W^1(t)-W^2(t))^+ &\leq ( W^1(t_{k_t})-W^2(t_{k_t}))^+ \\
			&\leq( W^1(t_{k_t}^-)-W^2(t_{k_t}^-))^+ +V_{k_t}\\
			&\leq ( W^1(0)-W^2(0))^++\sum_{i=1}^{k_t}V_i\\
			&\leq ( W^1(0)-W^2(0))^++J_0^1.
		\end{align*} 
		Similarly, this holds for $(W^2(t)-W^1(t))^+$. So 
		\begin{align*}
			\vert W^1(t)-W^2(t)\vert &\leq \max ((W^1(0)-W^2(0))^+,(W^2(0)-W^1(0))^+)+\max(J_0^1,J_0^2)\\
			&=\vert W^1(0)-W^2(0)\vert+\max(J_0^1,J_0^2).
		\end{align*}
	\end{proof}
	
	\begin{proof}{Proof of Lemma \ref{lmm: sychronous coupling2}}
		By definition of $L_0^i$, after $\max(L_0^1,L_0^2)$, all jobs in $N_0^i$ have already gone. Therefore, the semi-synchronously coupled queues have the same arrival times and job sizes. Without loss of generality, suppose $W_1(\max(L_0^1,L_0^2))\geq W_2(\max(L_0^1,L_0^2))$. Then, because two queues have the same arrival and service times, $W_1(t)\geq W_2(t)$ for any $t\geq \max(L_0^1,L_0^2)$. Therefore, 
		$$\tau_1\geq\tau_2,\quad \text{and} \quad0=W_1(\tau_1)\geq W_2(\tau_1)\geq 0.$$
		As a result, $W_1(\tau_1)=W_2(\tau_1)=0$. In addition, because two queues have the same arrivals and job sizes after $\max(L_0^1,L_0^2)$, 
		$$W^1(t)=W^2(t),\quad \forall~t\geq \tau_1\vee\tau_2.$$ 
	\end{proof}
	
	\begin{proof}{Proof of Proposition \ref{prop: mixing time}}
		We first show that $M_i(t)$ is indeed a super-martingale with respect to $\mathcal{F}_{L_0^i+t}$. Recall that $\hat{R}(L_0^i,t)$ is generated by a compound Poisson process, then by direct calculation, 
		\begin{align*}
			\EE[M_i(t+s)|\mathcal{F}_{L_0^i+t}]&=M_i(t) \EE[\exp(\theta (\hat{R}(L_0^i,t+s)-\hat{R}(L_0^i,t))+\eta\mu t)|\mathcal{F}_{L_0^i+t}]\\
			&=M_i(t) \EE[\exp(\theta\hat{R}(0,s)+\eta\mu s)]\\
			&\leq M_i(t)\EE[\exp(\theta_1\hat{R}(0,s)+\eta_1 \mu s)]^{\theta/\theta_1}\\
			&=M_i(t)\exp\left(\mu s\left[\frac{\lambda_0}{\mu}(\psi_{S_1}(\theta_1)-1)-\theta_1 +\eta_1 \right]\right)^{\theta/\theta_1}\\
			&\leq M_i(t)\exp\left(\mu s\left[\frac{\lambda_0}{\underline{\mu}-{\varepsilon}}(\psi_{S_1}(\theta_1)-1)-\theta_1 +\eta_1 \right]\right)^{\theta/\theta_1}\\
			&=M_i(t)\exp(-\mu s \eta_1/2)^{\theta/\theta_1}\leq M_i(t),
		\end{align*}
		where the first equality holds due to homogeneous independent incremental of Lévy process $\hat{R}(L_0^i,t)$, the first inequality holds since $\theta/\theta_1<1/2$ by definition of $\theta$ and Jensen inequality and the last equality holds due to the choice of $\theta_1$ and $\eta_1$.
		Therefore, by Fatou lemma,
		$$\EE[\exp(\eta \mu \hat{\tau}_i)\vert\mathcal{F}_{L_0^i}]=\EE[M_i(\hat{\tau}_i)\vert\mathcal{F}_{L_0^i} ]\leq \EE[M_i(0)\vert\mathcal{F}_{L_0^i}]=\exp(\theta W^i(L_0^i)+\theta J_0(L_0^i)).$$
		
		Based on this result, we next bound $\PP(\tau_i>t\vert\mathcal{F}_0)$.
		We have the following upper bounds for $W^i(L_0^i)$ and $J_0(L_0^i)$
		\begin{align*}
			W^i(L_0^i)&\leq W^i(0)+J_0^i(0)+\sum_{0\leq t_l^1\leq L_0^i}S_l=W^i(0)+J_0^i(0)+\hat{R}(0,L_0^i)+\mu L_0^i,\\
			J_0(L_0^i)&\leq \sum_{0\leq t_l^1\leq L_0^i}S_l=\hat{R}(0,L_0^i)+\mu L_0^i.
		\end{align*}
		Therefore,
		\begin{align*}
			\PP(\tau_i>t\vert \mathcal{F}_0)&\leq e^{-\eta\mu t}\EE[e^{\eta\mu \tau_i}\vert \mathcal{F}_0]\\
			&\leq e^{-\eta\mu t}\EE[e^{\eta\mu (\hat{\tau}_i+ L_0^i)}\vert \mathcal{F}_0]\\
			&\leq e^{-\eta\mu t}e^{\eta \mu L_0^i}\EE[\exp(\theta W^i(L_0^i)+\theta J_0(L_0^i))\vert\mathcal{F}_0]\\
			&\leq e^{-\eta \mu t}e^{(\eta+2\theta)\mu L_0^i}e^{\theta(W^i(0)+J_0^i(0))}\EE[\exp(2\theta\hat{R}(0,L_0^i))\vert\mathcal{F}_0]\\
		\end{align*}
		Note that $\theta<\theta_1/2, \eta<\eta_1/2$, we have that $\exp(2\theta\hat{R}(0,t)+2\eta\mu t)$ is a positive super-martingale following the proof of $M_i(t)$. Therefore,
		$$\EE[\exp(2\theta\hat{R}(0,L_0^i))\vert \mathcal{F}_0]\leq \EE[\exp(2\theta\hat{R}(0,L_0^i)+2\eta\mu L_0^i)\vert \mathcal{F}_0]\leq 1.$$
		As a result,
		$$\PP(\tau_i>t\vert\mathcal{F}_0)\leq e^{-\eta \mu t}e^{(\eta+2\theta)\mu L_0^i}e^{\theta(W^i(0)+J_0^i(0))}.$$
	\end{proof}

	\subsection{Proofs of the Auxiliary Results in Section \ref{subsec: online algorithm} and \ref{subsec: regret analysis}}\label{subsec: Proofs of regret analysis}
	In this section, we first prove the gradient representation result in Proposition \ref{prop: gradient design}. Then, we prove three critical properties: (i) boundedness; (ii) exponential ergodicity of Hawkes queues; and (iii) convexity in Section \ref{subsec: regret analysis}. Finally, we give a detailed proof of the regret analysis in Section \ref{subsec: proofs regret}.
	\subsubsection{Proof of Proposition \ref{prop: gradient design}}
	We first introduce an intermediate result, which describes the workload difference of two \textbf{fully synchronously coupled} $G/G/1$ queue pasewisely. The proof of Lemma \ref{lmm: Lipchitz} is omitted and we refer the readers to \cite{onlinequeue2020}. 
	\begin{lemma}[Pathwise Lipchitz Lemma (Lemma 3 of \cite{onlinequeue2020})]  For any two $G/G/1$ queues (general input) with initial states $W^1(0)$ and $W^2(0)$, if they share the same arrival process $N(t)$ and the same load $V_i$'s, but have the different rates $\mu_1,\mu_2$, then the difference of the workload processes are bounded by 
		$$\vert W^1(t)-W^2(t)\vert\leq \vert W^1(0)-W^2(0)\vert+\vert \mu_1-\mu_2\vert \max (X^1(0),X^2(0)).$$
		\label{lmm: Lipchitz}
	\end{lemma}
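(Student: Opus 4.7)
The plan is a pathwise comparison of the coupled workload difference $D(t) \equiv W^1(t)-W^2(t)$. Since the two queues share the same arrival epochs $\{t_n\}$ and the same job sizes $\{V_n\}$ and differ only in the deterministic service rates, $D(\cdot)$ has no jumps (the increments $V_n$ cancel in the difference) and is piecewise linear on each inter-arrival interval. First I would compute the one-sided derivative of $D$ on each inter-arrival interval by case analysis on the busy/idle configuration of the two queues: it equals $\mu_2-\mu_1$ when both queues are busy, $-\mu_1$ when only queue~1 is busy (so $D=W^1\geq 0$), $+\mu_2$ when only queue~2 is busy (so $D=-W^2\leq 0$), and $0$ when both are idle.

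The key observation is that whenever at least one queue is idle, the one-sided derivative of $D$ is opposite in sign to $D$ itself (or is zero), hence $|D(\cdot)|$ is non-increasing on such intervals; on the complementary joint-busy set $|D(\cdot)|$ grows at rate at most $|\mu_1-\mu_2|$. Integrating along the trajectory yields
$$|W^1(t)-W^2(t)| \leq |W^1(0)-W^2(0)| + |\mu_1-\mu_2|\cdot\text{Leb}\bigl(\{s\in[0,t]: W^1(s)>0 \text{ and } W^2(s)>0\}\bigr).$$

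The final step is to bound the Lebesgue measure of the joint-busy set in terms of $\max(X^1(0),X^2(0))$. I would argue that the initial joint-busy excursion, whose length is controlled by the maximum of the two ongoing observed busy periods at time~$0$, is the only excursion that can drive $|D|$ strictly beyond its initial value: any subsequent joint-busy interval must begin at an arrival epoch reached from a state where at least one queue was idle, so upon re-entry $|D|$ is no larger than the bound already accumulated during the initial excursion, and no further net growth occurs. The main obstacle is precisely this inductive bookkeeping across successive busy-cycle transitions of the coupled pair; showing, for every scenario of re-entry into a joint-busy regime, why $|D|$ cannot exceed its previously accumulated bound is the technical heart of the argument, and is what allows the bound to depend only on the initial observed busy periods rather than on $t$ itself. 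A cleaner alternative, which I would pursue if the direct bookkeeping proves cumbersome, is to replace $\max(X^1(0),X^2(0))$ by $\max(X^1(t),X^2(t))$: one notes that the joint-busy measure in $[0,t]$ is trivially bounded by the shorter of the two current busy periods, which in turn is bounded by $\max(X^1(t),X^2(t))$, and the desired Lipschitz-in-$\mu$ continuity of $w(\mu)$ then follows after taking expectations and using the stationary moment bounds established in Theorem~\ref{thm: moment of W}.
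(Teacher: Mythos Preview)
The paper does not prove this lemma; it simply cites Lemma~3 of \cite{onlinequeue2020}. Note also that, as written with $X^i(0)$, the inequality is false: take $W^1(0)=W^2(0)=0$, $X^1(0)=X^2(0)=0$, and let one job arrive just after time~$0$. The version that is actually provable --- and the one used in the proof of Proposition~\ref{prop: gradient design} --- has $\max(X^1(t),X^2(t))$ on the right-hand side. So your instinct at the end to switch to the terminal-time busy periods is correct, and your attempt in step~5a to push through the $X^i(0)$ version was bound to fail.

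Your derivative analysis and the resulting bound
\[
|D(t)|\le |D(0)|+|\mu_1-\mu_2|\cdot\mathrm{Leb}\bigl\{s\in[0,t]:W^1(s)>0\text{ and }W^2(s)>0\bigr\}
\]
are correct, but the next step in your ``cleaner alternative'' has a genuine gap: the joint-busy Lebesgue measure is \emph{not} bounded by $\min(X^1(t),X^2(t))$, nor even by $\max(X^1(t),X^2(t))$, because it may include earlier busy excursions of both queues, not only the current ones. Concretely, with $\mu_1=1$, $\mu_2=2$, $W^1(0)=W^2(0)=10$ and a single arrival of size~$20$ at time~$5$, the joint-busy set on $[0,15]$ has measure~$15$ while $X^2(15)=10$. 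Thus the Lebesgue-measure route is too lossy to reach the stated bound.

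The clean argument avoids the joint-busy measure and bounds $D(t)$ from above and below directly via the last idle time of each queue. Assuming $\mu_1\le\mu_2$: if $\sigma=t-X^1(t)\in[0,t]$ is the last idle time of queue~1, then $D(\sigma)\le 0$ and on $(\sigma,t]$ queue~1 is busy so $D'\le\mu_2-\mu_1$, giving $D(t)\le(\mu_2-\mu_1)X^1(t)$; if queue~1 is never idle on $[0,t]$ then $D'\le\mu_2-\mu_1$ throughout and $D(t)\le D(0)+(\mu_2-\mu_1)t\le|D(0)|+(\mu_2-\mu_1)X^1(t)$. Symmetrically, whenever queue~2 is busy $D'\ge 0$, so tracking the last idle time of queue~2 yields $D(t)\ge -|D(0)|$. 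Combining the two gives $|D(t)|\le|D(0)|+|\mu_1-\mu_2|\max(X^1(t),X^2(t))$.
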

	
	\begin{proof}{Proof of Proposition \ref{prop: gradient design}}
		The proof of Proposition \ref{prop: gradient design} is based on our construction of stationary workload $\tilde{W}(t)$ in Section 3. To distinguish the workload under different $\mu$, in this proof, we denote $R^\leftarrow_\mu(0,t)=\sum_{N(-t)}^{-1}V_i-\mu t$ and $\tilde{W}_\mu(0)=\max_{u\geq 0} R^\leftarrow_\mu(0,u)$.
		Recall that $\tilde{W}_\mu (t)$
		follows the stationary distribution of the workload process, and the corresponding stationary busy time is
		$$\tilde{X}_\mu(t)=\arg\max_{u\geq 0}R^\leftarrow_\mu (t,u).$$
		Therefore, by taking the derivative with respect to $\mu$ pathwisely, we have 
		\begin{align*}
			\tilde{W}_\mu(0)'&=\left(\max_{u\geq 0}R_\mu^\leftarrow(0,u)\right)'\\
			&=\left(\max_{u\geq 0}\sum_{N(-u)}^{-1}V_i-\mu u\right)'\\
			&=-\arg\max_{u\geq 0}R_{\mu}^\leftarrow(0,u)=-\tilde{X}(0),
		\end{align*}
		with probability one. If we could interchange the derivative and the expectation, we could have 
		$$w'(\mu)=\EE[\tilde{W}_\mu(0)]'=\EE[\tilde{W}_\mu(0)']=-\EE[\tilde{X}_\mu(0)].$$
		
		We justify the interchange of expectation and derivative by showing that $\tilde{W}_\mu(0)$ is Lipchitz continuous with respect to $\mu$ (by Lemma \ref{lmm: Lipchitz}).
		Note that, the workload and observed busy time $\tilde{W}_\mu(t),\tilde{X}_\mu(t)$ are non-increasing with $\mu$, i.e., for any $\mu_1<\mu_2$ and $t\leq 0$,
		$$\tilde{W}_{\mu_1}(t)\geq\tilde{W}_{\mu_2}(t),\quad \tilde{X}_{\mu_1}(t)\geq\tilde{X}_{\mu_2}(t).$$
		Let $t_0=-\tilde{X}_{\underline{\mu}}(0)$. Then, for $\mu\in\mathcal{B}$, $\mu\geq\underline{\mu}$, and consequently,
		$$0=\tilde{W}_{\underline{\mu}}(t_0)\geq \tilde{W}_\mu(t_0)\geq 0.$$
		As a result, by Lipchitz lemma (Lemma \ref{lmm: Lipchitz}),
		\begin{align*}
			\vert \tilde{W}_{\mu+h}(0)-\tilde{W}_{\mu}(0)\vert&\leq \vert\tilde{W}_{\mu+h}(t_0) -\tilde{W}_{\mu}(t_0) \vert+h \max (\tilde{X}_{\mu+h}(0),\tilde{X}_{\mu}(0))\\
			\leq &0+ h\tilde{X}_{\underline{\mu}}(0).
		\end{align*}
		Then, by Proposition \ref{thm: moment of W}, $$\EE[\tilde{X}_{\underline{\mu}}(0)]<\infty,$$  so we know the expectation and derivatives can be interchanged by dominate convergence theorem.
		
		For Lipchitz property of $w(u)$, notice that 
		$$\vert w(u_1)-w(u_2)\vert=\vert \EE[\tilde{W}_{\mu_1}(0)-\tilde{W}_{\mu_2}(0)]\vert \leq \EE[\tilde{X}_{\underline{\mu}}(0)]\vert \mu_1-\mu_2\vert.$$
		This closes the proof.
	\end{proof}  
	\subsubsection{Proof of three key properties in Section \ref{subsec: regret analysis}}
	In this section, we prove the three key properties in Section \ref{subsec: regret analysis}: (i) boundedness; (ii) exponential ergodicity; (iii) convexity and smoothness.
	\begin{proof}{Proof of Corollary \ref{lmm:bounded}}
		First, recall that we assume the Hawkes$/GI/1$ queue  is operated from an empty state. For any control sequence, we couple it with a dominant system always under control $\mu_k=\underline{\mu}$. We couple two systems in a fully synchronous way, i.e., they have the same arrivals and job sizes. Moreover, denote the workload of the dominant system by $\bar{W}_k(t)$, we assume that the dominant system has a stationary initial $(N_{0,1}(0),\bar{W}_1(0))$. Then for any control policy, $\bar{W}_k(t)\geq_{st}W_k(t)$ stochastically and $\bar{W}_k(t)$ also follows the stationary distribtuion of $W_\infty(\underline{\mu})$. By Assumption \ref{assmpt: stable} and Theorem \ref{thm: moment of W}, 
		$$\EE[W_k(t)^{2m}]\leq \EE[\bar{W}_k(t)^{2m}]< \infty.$$
		Similarly, by Theorem \ref{thm: moment of W}, the corresponding system busy time $X^k(t)\leq \bar{X}^k(t)$, and $\EE[X_k(t)^{2m}]\leq \EE[\bar{X}_k(t)^{2m}]<\infty$.
		
		Next, because $\theta\leq \theta_1/6$ by definition, so by Proposition \ref{prop: J0} and Theorem \ref{thm: moment of W}, 
		$$\EE[\exp(6\theta \bar{W}_k(t))],\EE[\exp(6\theta J_{0,k}(t))],\EE[J_{0,k}(t)^{2m}]<\infty.$$ 
		Finally, for $L_{0,k}$, it's the residual life of $N_{0,k}(0)$ and it can be bounded by the total lifetime of $N_{B_0,k}$, i.e.
		$$L_{0,k}\leq \sum_{i=1}^{N_{B_0,k}}TB_i.$$
		In \eqref{eq: TB bound} , we have already proved that 
		$$\EE\left[\exp\left(\frac{\theta_0}{2}\sum_{i=1}^{N_{B_0,k}}TB_i\right)\right]=\EE\left[\EE\left[\exp\left(\frac{\theta_0}{2}TB_l\right)\Big\vert TB_l>-t_l^1\right]^{N_{B_0,k}}\right]<\infty.$$
		Now 
		$$6(\eta+2\theta)\mu_k\theta= \frac{\mu_k}{\max(\bar{\mu},1)}\min(\theta_0/2,\theta_1)\leq \frac{\theta_0}{2},$$
		and consequently, 
		$$\EE[\exp(6(\eta+2\theta))\mu_k L_{0,k}]<\infty.$$
		Since all of these queueing functions are finite, we can finish the proof of this lemma by choosing 
		\begin{align*}
			M=&\max \Big(\EE[W_k(t)^{2m}],\EE[X_k(t)^{2m}], \EE[J_{0,k}(t)^{2m}],\\
			&\quad\quad \quad\EE[e^{6(\eta+2\theta)\bar{\mu} L_{0,k}}],\EE[e^{6\theta W_k(t)}],\EE[e^{6\theta J_{0,k}(t)}]\Big).
		\end{align*}
	\end{proof}
	\begin{proof}{Proof of Corollary \ref{coro: ergodicity compact}}
		Denote 
		\begin{align*}
			e_k&=\exp\left((\eta+2\theta)\mu_k L_{0,k}+\theta(W_k(0)+J_{0,k})\right)\\
			\tilde{e}_k&=\exp\left((\eta+2\theta)\mu_k \tilde{L}_{0,k}+\theta(\tilde{W}_k(0)+\tilde{J}_{0,k})\right),
		\end{align*}
		where $\tilde{L}_{0,k}$ and $\tilde{J}_{0,k}$ are the residual life and total residual job size of $\tilde{N}_{0,k}(0)$. In addition, let $\mathcal{F}_{0,k}=\sigma(W_k(0),\tilde{W}_k(0),N_{0,k}(0),\tilde{N}_{0,k}(0),\mu_k)$. 
		By Theorem \ref{thm: geometric ergodicity}, we have 
		\begin{align*}
			\EE\left[\vert W_k(t)-\tilde{W}_k(t)\vert^m\Big\vert\mathcal{F}_{0,k}\right]\leq \exp\left(-\eta\mu_k t\right)D_0^m(e_k+\tilde{e}_k),
		\end{align*}
		where $D_0=\vert W_k(0)-\tilde{W}_k(0)\vert+J_{0,k}+\tilde{J}_{0,k}.$
		By H\"older inequality, we have
		\begin{align*}
			\EE[D_0^m e_k]=&\EE\left[D_0^me^{(\eta+2\theta)\mu L_{0,k}}e^{\theta W_k(0)}e^{\theta J_{0,k}}\right]\\
			=&\EE\left[(\vert W_k(0)-\tilde{W}_k(0)\vert +J_{0,k}+\tilde{J}_{0,k})^me^{(\eta+2\theta)\mu L_{0,k}}e^{\theta W_k(0)}e^{\theta J_{0,k}}\right]\\
			\leq &\EE\left[(\vert W_k(0)-\tilde{W}_k(0)\vert +J_{0,k}+\tilde{J}_{0,k})^{2m}\right]^\frac{1}{2}\EE[e^{6(\eta+2\theta)\mu L_{0,k}}]^\frac{1}{6}\EE[e^{6\theta W_k(0)}]^\frac{1}{6}\EE[e^{6\theta J_{0,k}}]^\frac{1}{6}\\
			\leq &\left(4^{2m-1}(\EE[W_k(0)^{2m}]+\EE[\tilde{W}_k(0)^{2m}]+\EE[(J_{0,k})^{2m}]+\EE[(\tilde{J}_{0,k}^2)^{2m}])\right)^\frac{1}{2}\sqrt{M}\\
			\leq &4^{m}M
		\end{align*}
		Following the same conduction, $\EE[D_0^m\tilde{e}_k]\leq 4^m M$. Therefore, 
		$$\EE\left[\vert W_k(t)-\tilde{W}_k(t)\vert^m\right]\leq 32M e^{-\eta \underline{\mu}t}$$ as $m\leq 2$. \\
		Similarly, for the second argument, by Theorem \ref{thm: geometric ergodicity} (ergodicity) and H\"older inequality, 
		\begin{align*}
			\EE[\vert X_k(t)-\tilde{X}_k(t)\vert^m]\leq &e^{-\eta \underline{\mu} t} \EE\left[\left(X_k(0)+\tilde{X}_k(0)+t\right)^m(e_k+\tilde{e}_k)\right]\\
			\leq &e^{-\eta \underline{\mu} t}\EE\left[\left(X_k(0)+\tilde{X}_k(0)+t\right)^{2m}\right]^\frac{1}{2}2\sqrt{M}\\
			\leq &2\sqrt{M}e^{-\eta \underline{\mu} t}\left(\EE[X_k(0)^{2m}]+\EE[\tilde{X}_k(0)^{2m}]+t^4\right)^\frac{1}{2}\\
			\leq &\left(2\sqrt{M}(2M+t^4)^\frac{1}{2}\right)e^{-\eta \underline{\mu} t}\leq  (2\sqrt{2}M+2t^2\sqrt{M}) e^{-\eta \underline{\mu} t}.
		\end{align*}
		The last inequality is due to $\sqrt{x+y}\leq \sqrt{x}+\sqrt{y}.$  Then, because $t^2e^{-0.5\eta\underline{\mu} t}\leq C\equiv e^{-2}\log^2(4/\eta\underline{\mu})$, by Theorem \ref{thm: geometric ergodicity},
		\begin{equation*}
			\EE\left[\vert X_k(t)-\tilde{X}_k(t)\vert ^m\right]\leq e^{-\eta \underline{\mu} t}(2\sqrt{2}M+2t^2\sqrt{M})\leq e^{-0.5\eta\underline{\mu} t}(2\sqrt{2}M+2C\sqrt{M}).
		\end{equation*}
		Therefore, by choosing $A=\max(32M,2\sqrt{2}M+2C\sqrt{M})$, we have the result.
	\end{proof}
	\begin{proof}{Proof of Corollary \ref{coro: convex and smooth}}
		We need to prove that $f(\mu)$ is strongly-convex and smooth in the compact region $\mathcal{B}$. By Proposition \ref{prop: gradient design},
		$$\nabla f(\mu)=-h_0\EE[X_\infty(\mu)]+c'(\mu).$$
		Note that the observed busy time $\EE[X_\infty(\mu)]$ is strictly decreasing in $\mu$, so $\EE[X_\infty(\mu)]'<0$ for all $\mu\in\mathcal{B}$. As a result, for all $\mu\in\mathcal{B}$,
		$$\nabla^2 f(\mu)=-h_0\EE[X_\infty(\mu)]'+c''(\mu)>K_0\equiv \min_{\mu\in\mathcal{B}} \{-h_0 \EE[X_\infty(\mu)]'+c''(\mu)\}\wedge 1>0.$$
		Then, by Taylor expansion,  for some $\zeta$ between $\mu$ and $\mu^*$,
		\begin{align*}
			&f(\mu^*)-f(\mu)=\nabla f(\mu)(\mu^*-\mu)+\nabla^2 f(\zeta)(\mu-\mu^*)^2\leq0\\
			\Rightarrow&\nabla f(\mu)(\mu-\mu^*)\geq \nabla^2 f(\zeta)(\mu-\mu^*)^2\geq K_0(\mu-\mu^*)^2.
		\end{align*}
		Similarly, for the second property, let 
		$$K_1\equiv \max_{\mu\in\mathcal{B}}\{-h_0\EE[X_\infty(\mu)]'+c''(\mu)\}.$$
		We have, for some $\zeta'$ between $\mu$ and $\mu^*$,
		$$\vert\nabla f(\mu)\vert=\vert\nabla f(\mu)-\nabla f(\mu^*)\vert=\vert\nabla^2 f(\zeta')\vert\cdot \vert \mu-\mu^*\vert\leq K_1\vert \mu-\mu^*\vert, $$
		as $\nabla f(\mu^*)=0$ by definition.
		This closes the proof.
	\end{proof}
	\subsubsection{Proofs for the Auxiliary Results in Section \ref{subsec: regret analysis}}\label{subsec: proofs regret}
	In this section, we give the full proof of our regret upper bound \ref{thm: main} based on the regret decomposition. We first summarize regret upper bounds to $R_1(L)$ and $R_2(L)$ by Proposition \ref{prop: R1} and \ref{coro: R2}, then give the detailed proof of them.
	\begin{proposition}[Bound for the Regret of nonstationarity]\label{prop: R1}
		Suppose that Assumption \ref{assmpt: stable} to \ref{assum: convex of cmu} hold. If we choose hyperparameters according to Theorem \ref{thm: main}, then there exists a constant $C_1>0$ (specified in \eqref{eq: C1}) such that
		$$R_1(L) \leq C_1\log(L)^2.$$
	\end{proposition}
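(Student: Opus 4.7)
The plan is to apply the exponential ergodicity of Corollary \ref{coro: ergodicity compact} cycle by cycle. For each cycle $k$, introduce a stationary workload process $\tilde{W}_k(t)$ under service rate $\mu_k$ that is semi-synchronously coupled with the actual process $W_k(t)$. Since $\tilde{W}_k$ is stationary, $\EE[\tilde{W}_k(t)]=w(\mu_k)$ for all $t\ge 0$, hence
$$R_{1,k} \;=\; h_0\int_0^{T_k}\EE[W_k(t)-\tilde{W}_k(t)]\,dt \;\le\; h_0\int_0^{T_k}\EE|W_k(t)-\tilde{W}_k(t)|\,dt.$$
A direct use of Corollary \ref{coro: ergodicity compact} at $m=1$ gives $\EE|W_k(t)-\tilde{W}_k(t)|\le A\,e^{-\eta\underline{\mu}t}$, which bounds $R_{1,k}$ by a constant; summed over $L$ cycles this would only give $O(L)$, so a sharper per-cycle estimate is needed.

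To reach $O(\log^2 L)$, I would keep the stronger bound from Theorem \ref{thm: geometric ergodicity} explicit, so the mismatch factor $D_0=|W_k(0)-\tilde{W}_k(0)|+\max(J_{0,k},\tilde{J}_{0,k})$ appears multiplicatively. Separating $D_0$ from the exponential Hawkes-memory factors $\exp((\eta+2\theta)\mu L_0^i+\theta(W^i(0)+J_0^i(0)))$ via H\"older's inequality and bounding the latter by the exponential moment estimates of Corollary \ref{lmm:bounded}, the per-cycle regret reduces to a constant multiple of $\EE[D_0]\cdot\int_0^{T_k}e^{-\eta\underline{\mu}t}dt$. The crucial observation is that $\EE[D_0]$ itself decays in $k$: because $W_k(0)=W_{k-1}(T_{k-1})$ is inherited from a long cycle of rate $\mu_{k-1}$, a triangle inequality gives
$$\EE|W_k(0)-\tilde{W}_k(0)| \;\le\; \EE|W_{k-1}(T_{k-1})-\tilde{W}_{k-1}(T_{k-1})| + \EE|\tilde{W}_{k-1}(T_{k-1})-\tilde{W}_k(0)|,$$
where the first term is $O(e^{-\eta\underline{\mu}T_{k-1}})$ by cycle-$(k-1)$ ergodicity and the second is $O(|\mu_k-\mu_{k-1}|)=O(\eta_{k-1})=O(1/k)$ by the Lipschitz continuity of $w(\cdot)$ in Proposition \ref{prop: gradient design} combined with the SGD update. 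With $c_T$ chosen as in Theorem \ref{thm: main}, $T_{k-1}=\Theta(\log k)$ is large enough that the first term is dominated, giving $R_{1,k}=O(\log(k)/k)$ (the extra $\log k$ entering through the $T_k$-dependence in the moment bookkeeping of $J_{0,k}$ after H\"older), and summing yields $R_1(L)=O(\log^2 L)$.

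The main obstacle is the cross-cycle bookkeeping: Theorem \ref{thm: geometric ergodicity} is stated conditionally on the initial $\sigma$-algebra $\mathcal{F}_0$ of the two coupled systems, so propagating the inductive $O(1/k)$ decay of $\EE|W_k(0)-\tilde{W}_k(0)|$ requires an outer expectation that simultaneously invokes the SGD step-size control, the uniform exponential moment bounds of Corollary \ref{lmm:bounded}, and the ergodicity gap inherited from cycle $k-1$. A clean organizational device is to define $\delta_k\equiv \EE|W_k(T_k)-\tilde{W}_k(T_k)|$ and establish an induction of the form $\delta_k\lesssim e^{-\eta\underline{\mu}T_k}(\delta_{k-1}+|\mu_k-\mu_{k-1}|)$, which closes under the chosen $c_T$ and $\eta_k$, and whose accumulated constants (via the H\"older split) spell out the explicit $C_1$ referenced in \eqref{eq: C1}.
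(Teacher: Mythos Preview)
There is a genuine gap. Your central claim is that $\EE[D_0]$ decays like $1/k$, where
\[
D_0 \;=\; |W_k(0)-\tilde{W}_k(0)| \;+\; \max\bigl(J_{0,k},\tilde{J}_{0,k}\bigr).
\]
But the second summand is a Hawkes-memory quantity that has nothing to do with the service rate or with how many cycles have elapsed: $J_{0,k}$ and $\tilde{J}_{0,k}$ are (stationary) total residual job sizes of the arrival process, and $\EE[\max(J_{0,k},\tilde{J}_{0,k})]$ is a fixed strictly positive constant for every $k$. Hence $\EE[D_0]$ is bounded below uniformly in $k$, and the bound $R_{1,k}\lesssim \EE[D_0]\int_0^{T_k}e^{-\eta\underline{\mu}t}\,dt$ collapses to $O(1)$ per cycle, i.e.\ $O(L)$ overall---exactly the crude bound you set out to improve. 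A related slip: bounding $\EE|\tilde{W}_{k-1}(T_{k-1})-\tilde{W}_k(0)|$ by $O(|\mu_k-\mu_{k-1}|)$ via Proposition~\ref{prop: gradient design} is not valid, since that proposition is Lipschitz continuity of the \emph{mean} $w(\mu)$, not a pathwise or $L^1$ coupling statement between two stationary samples at different rates.

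The paper avoids both issues by a time split rather than by making $D_0$ small. It cuts $[0,T_k]$ at $T_k/2$: on $[T_k/2,T_k]$ the factor $e^{-\eta\underline{\mu}t}$ alone is already $O(k^{-1})$ by the choice of $c_T$, so Corollary~\ref{coro: ergodicity compact} suffices. On $[0,T_k/2]$ the paper does \emph{not} compare $W_k(t)$ with $\tilde{W}_k(t)$; it instead extends the stationary process $\tilde{W}_{k-1}(\cdot)$ from cycle $k-1$ into cycle $k$ and compares $W_k(t)$ with $\tilde{W}_{k-1}(T_{k-1}+t)$. Once the Hawkes memories $L_{0,k-1},\tilde{L}_{0,k-1}$ have cleared (an event whose complement has probability $O(k^{-1})$), these two processes share the \emph{same} arrivals but different rates $\mu_k,\mu_{k-1}$, so the pathwise Lipschitz Lemma~\ref{lmm: Lipchitz} (not the mean-Lipschitz statement) applies and delivers the $|\mu_k-\mu_{k-1}|=O(1/k)$ factor. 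That, together with $|w(\mu_k)-w(\mu_{k-1})|=O(1/k)$ and the half-length $T_k/2=O(\log k)$, yields $I_1=O(\log(k)/k)$ and hence $R_1(L)=O(\log^2 L)$. The ingredients you are missing are precisely this warm-up/near-stationary split and the use of Lemma~\ref{lmm: Lipchitz} on the extended cycle-$(k-1)$ stationary process.
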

	Then we turn to the regret of suboptimality $R_2$ and we have a similar result as in Proposition \ref{prop: R1}. 
	\begin{proposition}[Bound for the Regret Due to Suboptimality]\label{coro: R2}
		Suppose Assumption \ref{assmpt: stable} to \ref{assum: convex of cmu} hold. If we select hyperparameters $\eta_k$ and $T_k$ suggested by Theorem \ref{thm: main}, there exists a constant $C_2$ (specified in \eqref{eq: C2}) such that 
		$$R_2(L) \leq  C_2\log(L)^2.$$
	\end{proposition}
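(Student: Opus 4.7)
My plan is to reduce the bound on $R_2(L)=\sum_k \EE[f(\mu_k)-f(\mu^*)]T_k$ to a bound on $d_k := \EE[(\mu_k-\mu^*)^2]$, since by the smoothness part of Corollary~\ref{coro: convex and smooth} together with $\nabla f(\mu^*)=0$, a Taylor expansion gives $f(\mu_k)-f(\mu^*)\leq (K_1/2)(\mu_k-\mu^*)^2$, and hence $R_{2,k}\leq (K_1/2)\, d_k\, T_k$. With $T_k=a_T+c_T\log k$, a bound $d_k=O(1/k)$ would yield $R_2(L)=O(\log L\cdot \sum_{k\leq L}1/k)=O((\log L)^2)$, giving the desired $C_2$. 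So the task reduces to showing $d_k\leq C/k$ for the projected SGD iterates under biased but ergodic gradient estimates.

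To set up the recursion, I would use nonexpansiveness of the projection onto $\mathcal{B}$ to write
\begin{equation*}
(\mu_{k+1}-\mu^*)^2 \leq (\mu_k-\mu^*)^2 - 2\eta_k H_k(\mu_k-\mu^*) + \eta_k^2 H_k^2.
\end{equation*}
Decompose $H_k = \nabla f(\mu_k) + B_k + Z_k$, where $B_k := \EE[H_k\mid \mathcal{F}_{0,k}]-\nabla f(\mu_k)$ is the bias of the gradient estimator inside cycle $k$ and $Z_k$ is the mean-zero noise. Conditioning on $\mathcal{F}_{0,k}$ and applying the strong-convexity bound $\nabla f(\mu_k)(\mu_k-\mu^*)\geq K_0(\mu_k-\mu^*)^2$, I get
\begin{equation*}
d_{k+1} \leq (1-2\eta_k K_0) d_k + 2\eta_k\,\EE[|B_k|\,|\mu_k-\mu^*|] + \eta_k^2\,\EE[H_k^2].
\end{equation*}

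The two remaining ingredients are the bias and variance. The bias equals $h_0$ times the average over $t\in[\xi T_k,T_k]$ of $\EE[X_\infty(\mu_k)]-\EE[X_k(t)\mid \mu_k]$, which by a semi-synchronous coupling with a stationary Hawkes$/GI/1$ queue and Corollary~\ref{coro: ergodicity compact} (with $m=1$) is bounded by $h_0 A\, e^{-0.5\eta\underline{\mu}\,\xi T_k}$. With the hyperparameter choice $c_T\geq 2(\eta\underline{\mu}\xi)^{-1}\max(\log(8A/K_0),1)$, this is $O(k^{-1})$, and in fact small enough that $2\eta_k\EE[|B_k|\,|\mu_k-\mu^*|]\leq \tfrac{1}{2}\eta_k K_0 d_k + O(k^{-3})$ by Cauchy--Schwarz. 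For the variance, Corollary~\ref{lmm:bounded} controls $\EE[X_k(t)^2]$ and hence $\EE[H_k^2]$ uniformly by some constant $\sigma^2$; combined with $\eta_k = c_\eta/k$ this gives an $\eta_k^2\sigma^2 = O(k^{-2})$ term.

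Plugging in $\eta_k=c_\eta/k$ with $c_\eta K_0\geq 2$ and absorbing the bias into the contraction factor yields a recursion of the form $d_{k+1}\leq (1-c/k)d_k + C'/k^2$ for constants $c>1$ and $C'>0$. A standard induction then gives $d_k\leq C/k$ for a constant $C$ depending on $K_0,K_1,A,\sigma^2,c_\eta,c_T$ and the diameter of $\mathcal{B}$. Substituting into $R_{2,k}\leq (K_1/2)d_k T_k$ and summing,
\begin{equation}\label{eq: C2}
R_2(L) \leq \frac{K_1 C}{2}\sum_{k=1}^L \frac{a_T+c_T\log k}{k} \leq C_2(\log L)^2
\end{equation}
for a constant $C_2$, which proves the claim. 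The main obstacle is the bias-handling step: unlike textbook SGD where the gradient estimator is unbiased, here the bias depends on $\mu_k$ through the arrival/service dynamics and only decays geometrically in the cycle length $T_k$; making it small enough relative to $\eta_k K_0 d_k$ is exactly what forces the logarithmic schedule on $T_k$ and dictates the lower bound on $c_T$.
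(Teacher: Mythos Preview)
Your proposal is correct and follows essentially the same strategy as the paper: verify a bias bound on $H_k$ via the ergodicity estimate in Corollary~\ref{coro: ergodicity compact}, a uniform second-moment bound on $H_k$ via Corollary~\ref{lmm:bounded}, combine these with strong convexity and the step-size condition $c_\eta K_0\geq 2$ to obtain $\EE[(\mu_k-\mu^*)^2]=O(1/k)$, and then sum $(K_1/2)d_kT_k$ with $T_k=a_T+c_T\log k$ to get $R_2(L)=O((\log L)^2)$. The only structural difference is that the paper packages the SGD recursion and the resulting $d_k\leq K_4/k$ bound into a black box (Proposition~\ref{prop: chen R2}, i.e.\ Theorem~2 of \cite{onlinequeue2020}) and merely checks its three hypotheses, whereas you unpack that box and derive the contraction recursion $d_{k+1}\leq(1-c/k)d_k+C'/k^2$ directly; the verification of the bias, variance, and step-size conditions is identical in both arguments.
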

	With Proposition \ref{prop: R1} and Proposition \ref{coro: R2}, the regret bound is already at hand.
	\begin{proof}{Proof of Theorem \ref{thm: main}}
		The regret bound holds immediately following Proposition \ref{prop: R1} and Corollary \ref{coro: R2}.
		For $R_1$, we have 
		$$R_1(L)\leq C_1 \log(L)^2.$$
		For $R_2$, we have 
		$$R_2(L)\leq C_2\log(L)^2.$$
		Therefore, in total, we have 
		$$R(L)\leq C_{alg} \log(L)^2,$$
		with 
		\begin{equation}\label{eq: K_alg}
			C_{alg}=C_1+C_2,
		\end{equation} 	
		where $C_1$ and $C_2$ are constants specified in \eqref{eq: C1} and \eqref{eq: C2}.
		Moreover, $M_L=a_TL+c_T\sum_{k=1}^{L}\log(k)=O(L\log(L))$. So $O(\log(L))=O(\log(M_L))$, and 
		$$R(L)=K_{alg}\log(L)^2=O(\log(M_L)^2).$$
		This closes the proof. 
	\end{proof}
	We give the proof of Proposition \ref{prop: R1} and Proposition \ref{coro: R2}, which give upper bounds of regret of transient behaviors and of suboptimality respectively. We first give the proof of the upper bound of $R_1(L)$.
	\begin{proof}{Proof of Proposition \ref{prop: R1}}
		$R_1(L)$ is the regret due to the transient behavior of Hawkes queues. We analyze $R_{1,k}$ by decomposing each cycle into two periods: (i) warm-up period $(0,\frac{1}{2}T_k)$; (ii) near-stationary period $(\frac{1}{2}T_k,T_k)$. In the warm-up period, the workload $W_k(0)$ is close to the steady state of cycle $k-1$, i.e., $W_\infty(\mu_{k-1})$; on the other hand, the workload in the near-stationary period is close to the steady state of cycle $k$, i.e., $W_\infty(\mu_k)$. So in the warm-up period, we compare $W_k(t)$ with $W_\infty(\mu_{k-1})$, and in the near-stationary period, we contrast $W_k(t)$ with $W_\infty(\mu_k)$.
		
		Specifically, we need to construct an appropriate stationary workload process semi-synchronously coupled with $W_k(t)$. For this purpose, at the beginning of each cycle $k$, we draw $(\tilde{W}_k(0),\tilde{J}_{0,k}(0))$ jointly from the stationary distribution of $(W_k(\infty),J_0)$ independently. Then we couple $\tilde{W}_k(t)$ with $W_k(t)$ semi-synchronously, i.e., $\tilde{W}_k(t)$ and $W_k(t)$ share the same coming clusters after $t=0$. Nevertheless, the residual jobs $J_{0,k}(0)$ and $\tilde{J}_{0,k}(0)$ differ. In this way, $\tilde{W}_k(t)$ is in steady state. In addition, we extend $\tilde{W}_{k}(t)$ to $t\in[0,T_k+T_{k+1}]$ so that $W_k(T_k+s)$ also share the same arrivals and loads with $W_{k+1}(s)$ for $s\in[0,T_{k+1}]$ (except those from $\tilde{N}_{0,k}(0)$ and $N_{0,k}$).
		
		
		Following this idea, we have 
		\begin{align*}
			&\left\vert \EE\left[\int_{0}^{T_k}W_k(t)-w(\mu_k)dt\right]\right\vert \\
			\leq &\underbrace{\left\vert \EE\left[\int_{0}^{\frac{1}{2}T_k}W_k(t)-w(\mu_k)dt\right]\right\vert }_{I_1}+\underbrace{\left\vert \EE\left[\int_{\frac{1}{2} T_k}^{T_k}W_k(t)-w(\mu_k)dt\right]\right\vert }_{I_2}
		\end{align*}
		For $I_2$, Corollary \ref{coro: ergodicity compact} yields
		\begin{align*}
			I_2&=\left\vert\EE\left[\int_{\frac{1}{2}T_k}^{T_k}W_k(t)-\tilde{W}_k(t)\right]\right\vert\\
			&\leq \int_{\frac{1}{2} T_k}^{T_k}\EE\left[\vert W_k(t)-\tilde{W}_k(t)\vert \right]dt\leq A\int_{\frac{1}{2} T_k}^{T_k}e^{-\eta \underline{\mu}t}dt\\
			&\leq \frac{A}{\eta\underline{\mu}}e^{-\frac{\eta}{2}\underline{\mu} T_k}\leq \frac{K_0}{8\eta\underline{\mu}}k^{-1}.
		\end{align*}
		For $I_1$, we intend to compare $W_k(t)$ with $\tilde{W}_{k-1}(T_{k-1}+t)$ using pathwise Lipchitz lemma, i.e., Lemma \ref{lmm: Lipchitz}. However, this lemma requires that $W_k(t)$ and $\tilde{W}_{k-1}(T_{k-1}+t)$ are fully synchronously coupled, which happens when residual jobs $J_{0,k-1}(0),\tilde{J}_{0,k-1}(0)$ have all gone at the beginning of cycle $k$, i.e., when $\max(L_{0,k-1},\tilde{L}_{0,k-1})<T_{k-1}$. Therefore, we further decompose $I_1$ into two terms.
		
		\begin{align*}
			I_1\leq &\underbrace{\left\vert \int_{0}^{\frac{1}{2} T_k}\EE\left[\left(W_k(t)-w(\mu_k)\right)\ind{\max(L_{0,k-1},\tilde{L}_{0,k-1})>T_{k-1}}\right] dt \right\vert}_{B_1} \\
			&+\underbrace{\left\vert \int_{0}^{\frac{1}{2} T_k}\EE\left[\left(W_k(t)-w(\mu_k)\right)\ind{\max(L_{0,k-1},\tilde{L}_{0,k-1})\leq T_{k-1}}\right] dt\right\vert}_{B_2}. 
		\end{align*}
		For $B_1$, because $T_k$ grows to $\infty$, $\PP(\max(L_{0,k-1},\tilde{L}_{0,k-1})>T_{k-1})$ is small for large enough $k$. So,
		\begin{align*}
			B_1=&\left\vert\EE\left[\left( W_k(t)-w(\mu_k) \right) \ind{\max(L_{0,k-1},\tilde{L}_{0,k-1})>T_{k-1}}\right]\right\vert\\
			\stackrel{(a)}{\leq }& \left(\EE[\vert W_k(t)-\tilde{W}_k(t)\vert ^2]\right)^\frac{1}{2}\PP\left(\max(L_{0,k-1},\tilde{L}_{0,k-1})>T_{k-1}\right)^\frac{1}{2}\\
			\stackrel{(b)}{\leq}& 2\sqrt{M}\left(2\EE[e^{\eta\bar{\mu} L_{0,k-1}}]e^{-\eta\underline{\mu} T_{k-1}}\right)^{\frac{1}{2}}\stackrel{(c)}{\leq}  2\sqrt{2}Me^{-0.5\eta\underline{\mu} T_{k-1}}\\
			\stackrel{(d)}{\leq}&2\sqrt{2}M(k-1)^{-1}\leq 4\sqrt{2}Mk^{-1}.
		\end{align*}
		Here inequality $(a)$ is Cauchy-Schwartz inequality. Inequality $(b)$ and $(c)$ comes from the boundedness property in Corollary \ref{lmm:bounded}. The inequality $(d)$ holds because of our choice of $T_k$.
		
		For $B_2$, we first give a bound on $\vert\EE [w(\mu_k)-w(\mu_{k-1})]\vert$. By Proposition \ref{prop: gradient design}, 
		\begin{align*}
			\vert \EE[w(\mu_k)-w(\mu_{k-1})]\vert &\leq \EE[X_\infty(\underline{\mu})]\EE[\vert\mu_k-\mu_{k-1} \vert]\leq M\eta_{k-1} \EE[\vert H_{k-1}\vert]\\
			&\leq Mc_\eta \left(((1-\xi)T_{k-1})^{-1}\int_{\xi T_{k-1}}^{T_{k-1}}\EE[\vert X_{k-1}(t)\vert]+c'(\mu_{k-1})dt\right)(k-1)^{-1}\\
			&\leq Mc_\eta \underbrace{(M+\max_{\mu\in\mathcal{B}}c'(\mu))}_{\equiv M_1}(k-1)^{-1}=c_\eta M M_1 (k-1)^{-1}.
		\end{align*}
		Similarly, we also have 
		\begin{equation}\label{eq: eta_kV_k}
			\EE[(\mu_k-\mu_{k-1})^2]=\eta_k^2\EE[H_k^2]\leq 2c_\eta^2 k^{-2}\cdot(2\EE[X_\infty(\underline{\mu})^2]+2\max_{\mu\in\mathcal{B}}c'(\mu)^2)\equiv 2c_\eta^2M_2 k^{-2}.
		\end{equation}
		Then $B_2$ can be bounded as follows.
		\begin{align*}
			B_2=&\left\vert \EE\left[\left(W_k(t)-w(\mu_k)]\right)\ind{\max(L_{0,k-1},\tilde{L}_{0,k-1})\leq T_{k-1}}\right]\right\vert \\
			\leq& \vert\EE[ w(\mu_k)-w(\mu_{k-1})]\vert +\left\vert \EE\left[\left(W_k(t)-w(\mu_{k-1})\right)\ind{\max(L_{0,k-1},\tilde{L}_{0,k-1})\leq T_{k-1}}\right]\right\vert \\
			\stackrel{(a)}{\leq}& c_\eta MM_1(k-1)^{-1}+\EE\left[\vert W_k(t)-\tilde{W}_{k-1}(T_{k-1}+t)\vert \ind{\max(L_{0,k-1},\tilde{L}_{0,k-1})\leq T_{k-1}} \right]\\
			\stackrel{(b)}{\leq } &Mc_\eta M_1\underbrace{(k-1)^{-1}}_{\leq 2 k^{-1}}+\EE[ \underbrace{\vert W_k(0)-\tilde{W}_{k-1}(T_{k-1})\vert }_{=\vert W_{k-1}(T_{k-1})-\tilde{W}_{k-1}(T_{k-1})\vert }]\\
			&\hspace{5cm}+\EE\left[\vert \mu_k-\mu_{k-1}\vert \max(X_k(0),\tilde{X}_{k-1}(T_{k-1}))\right]\\
			\stackrel{(c)}{\leq} &c_\eta M M_1\cdot2k^{-1}+Ae^{-\eta\underline{\mu}T_{k-1}}+\EE[\vert \mu_k-\mu_{k-1}\vert ^2]^{\frac{1}{2}}\EE[(X_k(0)+\tilde{X}_{k-1}(T_{k-1}))^2]^\frac{1}{2}\\
			\stackrel{(d)}{\leq} &(2c_\eta M M_1+A+2c_\eta\sqrt{2MM_2})k^{-1}. 
		\end{align*}
		Here inequality $(a)$ holds because of the triangle inequality and the fact that $\tilde{W}_{k-1}(T_{k-1}+t)$ is in steady-state. Inequality $(b)$ follows from the Lipchitz lemma (Lemma \ref{lmm: Lipchitz}). Inequality $(c)$ is the direct result of ergodicity (Corollary \ref{coro: ergodicity compact}). The inequality $(d)$ uses the boundedness of $\EE[X_k^2(0)]$ and $\EE[\tilde{X}^2_{k-1}(T_{k-1})]$ and \eqref{eq: eta_kV_k}. To sum up, we have
		\begin{align*}
			I_1\leq \frac{1}{2} T_kk^{-1} (2c_\eta MM_1+A+2c_\eta\sqrt{2MM_2}+4\sqrt{2}M)=O(\log(k)k^{-1})
		\end{align*} 
		since we choose $T_k=O(\log(k))$.\par 
		In total, we have 
		\begin{align*}
			I_1&\leq\frac{1}{2}(a_T+c_T)(2c_\eta MM_1+A+2c_\eta\sqrt{2MM_2}+4\sqrt{2}M) \log(k) k^{-1}=O(\log(k)k^{-1})\\
			I_2&\leq \frac{K_0}{8\eta \underline{\mu}}k^{-1}=O(k^{-1}).
		\end{align*}
		As a result,
		$$R_1(L)\leq C_1 \log (L)^2$$
		with
		\begin{equation}\label{eq: C1} 
			C_1=\frac{1}{2}(a_T+c_T)(2c_\eta MM_1+A+2c_\eta\sqrt{2MM_2}+4\sqrt{2}M)+ \frac{K_0}{8\eta \underline{\mu}}.
		\end{equation}
	\end{proof}
	
	Now we turn to the proof of $R_2(L)$.	Specifically, $R_2$ relies only on the ``goodness" of the gradient approximation $H_k$. Below we define two quantities to measure the ``goodness" of $H_k$ as the gradient estimator.
	\begin{equation*}
		B_k\equiv\max_{\mu_k\in\mathcal{B}}\left\vert\EE\left[ H_k-\nabla f(\mu_k) \big\vert \mu_k\right]\right\vert,\quad \mathcal{V}_k\equiv\max_{\mu_k\in\mathcal{B}}\EE\left[H_k^2\big\vert  \mu_k\right].
	\end{equation*}
	The following Proposition \ref{prop: chen R2}, which appears in Theorem 2 of \cite{onlinequeue2020}, is an extension of conventional SGD convergence result (e.g., \cite{kushner2003stochastic}) allowing biased gradient estimator and unequal cycle length. We provide the analysis of $R_2$ with Proposition \ref{prop: chen R2} used as a black-box.
	
	\begin{proposition}[Theorem 2 in \cite{onlinequeue2020} ]\label{prop: chen R2}
		Suppose the objective function $f(\mu)$ is strongly convex and smooth in $\mathcal{B}$, i.e., there exists $K_0\leq 1$ and $K_1>K_0$, such that
		\begin{enumerate}
			\item $(\mu-\mu^*)\nabla f(\mu)\geq K_0(\mu-\mu^*)^2$
			\item $\vert \nabla f(\mu)\vert \leq K_1 \vert \mu-\mu^*\vert $.
		\end{enumerate}
		In addition, if there exists a constant $K_3\geq 1$ and $k_0>0$ such that the following conditions holds for all $k>k_0$,
		\begin{enumerate}
			\item[(a)]$\frac{1}{k}\leq \frac{K_0}{2}\eta_k$,
			\item[(b)] $B_k\leq \frac{K_0}{8}k^{-1}$,
			\item[(c)] $\eta_k\mathcal{V}_k\leq K_3k^{-1}$,
		\end{enumerate}
		then, there exists a constant $K_4\geq 8K_3/K_0$ such that for all $k\geq 1$, 
		$$\EE[(\mu_k-\mu^*)^2]\leq K_4k^{{-1}},$$
		and as a consequence,
		$$R_2(L)\leq K_1K_4\sum_{k=1}^{L}T_kk^{-1}.$$
	\end{proposition}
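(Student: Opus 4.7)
My plan is to adapt the standard potential-function analysis for projected stochastic gradient descent on a strongly convex objective, with extra care to accommodate the biased gradient oracle $H_k$. Let $a_k \equiv \EE[(\mu_k - \mu^*)^2]$. The strategy has three steps: first derive a one-step recursion of the form $a_{k+1} \leq (1 - \eta_k K_0) a_k + r_k$ with an explicit noise term $r_k$; second, run a clean induction to conclude $a_k \leq K_4/k$; third, convert this iterate guarantee into the regret bound via smoothness.

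For the one-step recursion I would start from the update $\mu_{k+1} = \Pi_{\mathcal{B}}(\mu_k - \eta_k H_k)$ and exploit nonexpansiveness of the projection $\Pi_{\mathcal{B}}$ (together with $\mu^* \in \mathcal{B}$) to obtain
$$
(\mu_{k+1} - \mu^*)^2 \leq (\mu_k - \mu^*)^2 - 2\eta_k H_k(\mu_k - \mu^*) + \eta_k^2 H_k^2.
$$
Conditioning on $\mu_k$, writing $\EE[H_k \mid \mu_k] = \nabla f(\mu_k) + (\EE[H_k \mid \mu_k] - \nabla f(\mu_k))$, and applying the bias bound $B_k$, strong convexity (property~1), and Young's inequality $2\eta_k B_k |\mu_k - \mu^*| \leq \eta_k K_0 (\mu_k - \mu^*)^2 + \eta_k B_k^2/K_0$, taking total expectation yields
$$
a_{k+1} \leq (1 - \eta_k K_0)\, a_k + \eta_k B_k^2/K_0 + \eta_k^2 \mathcal{V}_k.
$$

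Now I would plug in the three hypotheses. Condition~(a) guarantees the contraction factor is at most $1 - 2/k$; condition~(b) forces the bias term $\eta_k B_k^2/K_0$ to be $O(1/k^3)$; and condition~(c), together with $\eta_k = O(1/k)$ from the algorithm, gives $\eta_k^2 \mathcal{V}_k = \eta_k \cdot (\eta_k \mathcal{V}_k) \leq \eta_k K_3/k = O(1/k^2)$. Induction on $a_k \leq K_4/k$ then closes by direct substitution, since $(1 - 2/k)(K_4/k) + O(K_3/k^2) \leq K_4/(k+1)$ provided $K_4 \geq 8K_3/K_0$ is chosen large enough to dominate the noise and cover the base case (which uses compactness of $\mathcal{B}$, so $a_1 \leq (\bar\mu-\underline\mu)^2$, and we may start the induction at $k_0$ otherwise). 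Finally, smoothness (property~2) combined with $\nabla f(\mu^*) = 0$ gives $f(\mu_k) - f(\mu^*) \leq K_1(\mu_k - \mu^*)^2$ by the mean value theorem, so
$$
R_2(L) = \sum_{k=1}^{L}\EE[f(\mu_k) - f(\mu^*)]\, T_k \leq K_1 \sum_{k=1}^L a_k T_k \leq K_1 K_4 \sum_{k=1}^L T_k/k.
$$

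The main obstacle I anticipate is calibrating the constants so that the induction actually closes. The step size must be simultaneously large enough (condition~(a)) to produce a contraction of rate at least $2/k$, which is what telescopes correctly against the $K_4/k$ ansatz, yet the aggregate noise $\eta_k B_k^2/K_0 + \eta_k^2 \mathcal{V}_k$ must remain $O(1/k^2)$. The bias contribution is the delicate piece: Young's inequality in effect splits the $-2\eta_k \nabla f(\mu_k)(\mu_k - \mu^*)$ drift in half to absorb the cross-term $\eta_k B_k|\mu_k - \mu^*|$, which is precisely why condition~(b) needs the relatively stringent $B_k \leq K_0/(8k)$ rather than a merely $o(1)$ bias bound. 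Everything else is essentially bookkeeping.
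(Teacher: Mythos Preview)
The paper does not prove this proposition at all: it is explicitly labeled as ``Theorem~2 in \cite{onlinequeue2020}'' and the surrounding text states that it is used ``as a black-box.'' So there is no paper proof to compare against.

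That said, your proposed argument is the standard potential-function analysis for projected SGD with a biased oracle, and it is essentially the proof one would expect to find in the cited reference. One small point worth flagging: as you yourself note, the induction requires $\eta_k = O(1/k)$ to push both noise terms down to $O(1/k^2)$, but the proposition as stated only gives the \emph{lower} bound $\eta_k \geq 2/(K_0 k)$ via condition~(a), and condition~(c) only bounds the product $\eta_k \mathcal{V}_k$. You patch this by invoking ``$\eta_k = O(1/k)$ from the algorithm,'' which is fine in context since the paper always uses $\eta_k = c_\eta/k$, but strictly speaking that is an implicit extra hypothesis of the proposition rather than something derivable from (a)--(c) alone. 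Otherwise the recursion, the Young-inequality split to absorb the bias cross-term, the induction with $K_4 \geq 8K_3/K_0$, and the final conversion to $R_2(L)$ via the smoothness bound are all correct.
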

	
	\begin{proof}{Proof of Theorem \ref{coro: R2}}
		We verify conditions $(a)$ to $(c)$ one by one.
		For condition (a), notice that $\eta_k=c_\eta k^{-1}$ with $c_\eta\geq \frac{2}{K_0}$, condition $(a)$ holds automatically.
		
		For condition (b), recall that the gradient estimator is 
		$$H_k=-\frac{h_0}{(1-\xi)T_k}\int_{\xi T_k}^{T_k}X_k(t)dt+c'(\mu).$$
		Therefore, the bias can be bounded by 
		\begin{align*}
			\EE\left[\left \vert\frac{1}{(1-\xi)T_k}\int_{\xi T_k}^{T_k}X_k(t)-\EE[X_k(\infty)]\right\vert\right]&\leq \EE\left[\frac{1}{(1-\xi)T_k}\int_{\xi T_k}^{T_k}\vert X_k(t)-\tilde{X}_k(t)\vert\right]\\
			&\leq A\exp(-0.5\eta \underline{\mu}\xi T_k)\leq \frac{K_0}{8}k^{-1},
		\end{align*}
		with $T_k=a_T+c_T\log(k)$, $c_T\geq 2(\eta \underline{\mu}\xi )^{-1}\log(8A/K_0)$.
		
		Finally, for condition (c), notice that $\eta_k=c_\eta k^{-1}$, and we only need to show that $\EE[H_k^2]<\infty$. Similar to \eqref{eq: eta_kV_k}, we have 
		\begin{align*}
			\EE[H_k^2]&=\EE\left[\left(((1-\xi)T_{k-1})^{-1}\int_{\xi T_{k-1}}^{T_{k-1}}\EE[\vert X_{k-1}(t)\vert]+c'(\mu_{k-1})\right)^2\right]\\
			&\leq 2\EE[X_{\infty}(\underline{\mu})^2]+2\max_{\mu\in\mathcal{B}}c'(\mu)^2=M_2<\infty,
		\end{align*}
		where $M_2$ is defined in \eqref{eq: eta_kV_k}.
		Therefore, take $K_3=M_2\vee 1$ and the condition $(c)$ holds.
		Therefore, by choosing 
		\begin{equation}\label{eq: C2}
			C_2=K_4K_1(a_T+c_{T})
		\end{equation}
		we have 
		$$R_2(L)\leq K_4K_1\sum_{k=1}^{L}(a_T+c_T)k^{-1}\log k\leq C_2 \log (L)^2.$$
	\end{proof}

\end{document}